\newcommand{\fdiv}{\mathrm{div}} 
\renewcommand{\div}{\mathrm{div}\,}
\newcommand{\pdiv}[1]{\fdiv\left( #1 \right)} 
\newcommand{\init}[0]{\mathrm{in}} 
\newcommand{\bR}{\mathbb{R}}
\newcommand{\cP}{\mathcal{P}}
\newcommand{\ac}[0]{\mathrm{ac}}
\newcommand{\cl}[1]{\overline{#1}}
\newcommand{\sE}{\mathscr{E}}
\newcommand{\sG}{\mathscr{G}}
\newcommand{\sF}{\mathscr{F}}
\newcommand{\bS}{\mathbb{S}}
\newcommand{\sD}{\mathscr{D}}
\newcommand{\sJ}{\mathscr{J}}
\newcommand{\cO}{\mathcal{O}}
\newcommand{\bN}{\mathbb{N}}
\newcommand{\sK}{\mathscr{K}}
\newcommand{\seq} [1]{\left\{ #1 \right\}}
\newcommand{\ol}[1]{\overline{#1}}
\newcommand{\wh}[1]{\widehat{#1}}
\newcommand{\fd}[2]{\frac{d #1}{d #2}} 			
\newcommand{\pseq} [1]{\left( #1 \right)}
\newcommand{\abs}[1]{\left\vert #1 \right\vert}
\newcommand{\set}[1]{\left\{ #1 \right\}}
\DeclareMathOperator{\supp}{supp}
\newcommand{\lap}{\Delta}  
\DeclareMathOperator*{\argmin}{arg\,min}
\newcommand{\norm}[1]{\left\Vert #1 \right\Vert}
\newcommand{\pd}[2]{\partial_{#2} #1}
\newcommand{\ul}[1]{\underline{#1}}
\def\wk{\mathrm{-wk}}
\DeclarePairedDelimiter\ceil{\lceil}{\rceil}
\def\eps{\varepsilon}
\def\pa{\partial}
\def\na{\nabla}
\numberwithin{equation}{section}
\newtheorem{theorem}{Theorem}[section]
\newtheorem{theorem*}{Theorem}
\newtheorem{lemma}[theorem]{Lemma}
\newtheorem{proposition}[theorem]{Proposition}
\newtheorem{corollary}[theorem]{Corollary}
\theoremstyle{definition}
\newlist{wellPosednessHypotheses-f}{enumerate}{1}
\newlist{refinedGrowthCondition-f}{enumerate}{1}
\newlist{singularLimitHypotheses-f}{enumerate}{1}
\setlist[wellPosednessHypotheses-f]{label = \bfseries(F\arabic*)}
\setlist[refinedGrowthCondition-f]{ label=\bfseries(F\arabic*$^\ast$)}
\setlist[singularLimitHypotheses-f]{ label = \bfseries(F\arabic*)}
\newcounter{growthCondition-f}
\title[Volume-preserving MCF as a singular limit of a diffusion-aggregation equation]{Volume-preserving mean-curvature flow as a singular limit\\ of a diffusion-aggregation equation}
\author{A. Mellet}
\author{M. Rozowski}
\address{Department of Mathematics, University of Maryland, College Park, MD 20742-4015, United States.}
\email{mellet@umd.edu}
\email{mrozowsk@umd.edu}
\thanks{
A. Mellet was partially supported by NSF Grant DMS-2009236 and DMS-2307342.
}
\begin{document}	
\maketitle

\begin{abstract}
The Patlak-Keller-Segel system of equations (PKS) is a classical example of aggregation-diffusion equation in which the repulsive effect of diffusion is in competition with the attractive chemotaxis term. 
Recent work on the {\it Parabolic-Elliptic PKS model} have shown that when the repulsion is modeled by a nonlinear diffusion term $\rho \na \rho^{m-1}$ with $m>2$, this competition leads to phase separation phenomena. Furthermore, in some asymptotic regime corresponding to a large population observed over a long enough time, the interface separating regions of high and low density evolves according to the Hele-Shaw free boundary problem with surface tension. 
In the present paper,  we consider the counterpart of that model, namely the {\it Elliptic-Parabolic PKS model} and we prove that the same phase separation phenomena occurs, but the motion of the interface is now described (asymptotically) by a volume-preserving mean-curvature flow.
\end{abstract}






\section{Introduction}
The following \emph{parabolic-parabolic Patlak-Keller-Segel system} 
with nonlinear diffusion is a classical example of an aggregation-diffusion equation \cite{Keller_Segel,Patlak}:
\begin{equation}\label{eq:PKS}
	\begin{cases}
		\alpha \pa_t \rho - \pdiv{\rho\nabla f'(\rho)} + \chi \div(\rho \na \phi) = 0,\quad \\ 
		\beta \pa_t \phi -\eta \Delta \phi = \rho-\sigma \phi.  
	\end{cases}
\end{equation} 
In this model, $\rho$ denotes the density of some organisms and $\phi$ denotes the concentration of some chemical (the chemoattractant) towards which the organisms are attracted ($\chi>0$).
The parameters $\alpha,\beta \ge 0$ are constants denoting the timescales for the relaxation of the organism density and chemoattractant concentration, respectively. 
The constant $\chi > 0$ is the strength of the chemoattraction; $\eta > 0$ is the diffusivity of the chemoattractant in the medium; and $\sigma> 0$ is the rate of destruction of the chemoattractant. 
The nonlinear diffusion term models the mutual repulsion of the organisms:
The derivative $f'(\rho)$ can be interpreted as an internal pressure caused by the natural incompressibility of the organisms.
A common choice of pressure law is  $f(\rho) = \frac{1}{m-1}\rho^m$ for $m>1$ 
which corresponds to degenerate nonlinear diffusion
or $f(\rho) = \rho \ln \rho$ which gives the standard linear diffusion.

The key feature of this system is the competition between the repulsive action of the nonlinear diffusion (incompressibility of the organisms) and the  attractive chemotaxis force $\na \phi$.
The fact that this competition might result in finite time blow-up of the density $\rho$ has been extensively studied:
In dimension $d$, 
 it is known that the existence of  global-in-time  weak solutions is guaranteed  whenever $m > 2 - \frac{2}{d}$ (see for example \cite{Mim17}  for the parabolic-parabolic system), but  blow-up can occur for sufficiently large initial mass of organisms $\int \rho_\init \,dx$ whenever $m = 2 - \frac{2}{d}$.  Similarly for the so-called \emph{parabolic-elliptic system} ($\beta=0$), it is known \cite{2006_Sugyiama_globalExistenceSubCritical-blowupSuperCritical, BRB} that blow-up may occur when $m<2-\frac{2}{d}$, while solutions are bounded in $L^\infty$ uniformly in time when $m>2-\frac{2}{d}$.

More recently, several works (see \cite{KMW2,M23} and  the survey \cite{KMJW}) have shown that when $m>2$, the competition 
between repulsive and attractive forces in \eqref{eq:PKS} leads to phase separation (at appropriate scales).
Phase separation, which refers to the ability of the organisms to organize themselves into segregated groups separated by sharp interfaces, is an important feature of many biological aggregation phenomena. Classical examples include animals moving as homogeneous patches (insect swarms) or the formation of membraneless organelles in eukaryotic cells.
When phase separation occurs, our focus naturally turns to the evolution of the sharp interface separating regions of high and low (or zero) organism density. More precisely, one would like to derive the equation (of a geometric nature) that governs the evolution of this interface, thus describing the collective dynamics of the organisms after aggregation.

Mathematically, the interface emerges through some singular limit which  describes regimes in which the size of the support of $\rho$ is much larger than the typical length scale of the transition layer   between regions of high and low density.
Following \cite{KMW2,M23,KMJW} we thus consider a very large initial population of organisms, which we quantify by introducing a positive $\eps\ll1$ such that
\[
\int \rho_{\init}(\bar{x})\, d\bar{x} =\eps^{-d},
\]
where $\bar x$ is the microscopic space variable.
We then rescale the microscopic space and time variables ($\bar{x}$ and $\bar{t}$, resp.)  as follows:
$$ x:=\eps \bar{x}, \qquad t:=\tau^{-1}\eps^2 \bar{t}$$
for some time scale $\tau > 0$ to be chosen later
(where $x$ and $t$ denote the new -- macroscopic -- space and time variables). When $\eps\ll1$, this scaling amounts to observing the evolution of the population from far away (with a zoom out factor of $\eps^{-1}$) and at time scale $\eps^{-2}\tau$.
The functions $\rho^\eps(t,x) :=\rho(\bar{t},\bar{x})$ and  $\phi^\eps(t,x) :=\phi(\bar{t},\bar{x})$ then solve
\begin{equation}\label{eq:PKSeps}
	\begin{cases}
		\alpha \tau^{-1}  \pa_t \rho^\eps - \pdiv{\rho^\eps \nabla f'(\rho^\eps)} + \chi \div(\rho^\eps \na \phi^\eps) = 0\quad & \mbox{ in }  (0,\infty)\times \Omega, \\
		\beta  \tau^{-1} \pa_t \phi ^\eps-\eta \Delta \phi ^\eps= \eps^{-2}\left(  \rho^\eps-\sigma \phi ^\eps\right) & \mbox{ in }   (0,\infty)\times \Omega.
	\end{cases}
\end{equation} 
We note that the bounded domain $\Omega \subset \bR^d$ has macroscopic size $\sim 1$ and is thus independent of $\eps$. 
Denoting by $n$ the outward unit normal vector to $\partial\Omega$, we supplement our equations with the following  no-flux boundary conditions
\begin{equation}\label{eq:bc}
	(-\rho^\eps\na f'(\rho^\eps)  +\chi  \rho^\eps\na \phi^\eps)\cdot n = 0, \qquad \na \phi\cdot n=0  \quad \mbox{on } \pa\Omega
\end{equation}
and initial conditions
$$ \rho^\eps(0,x)=\rho^\eps_{\init}(x), \qquad \phi^\eps(0,x) = \phi^\eps_{\init}(x) \qquad\mbox{ in } \Omega,$$
where we now have in particular
$$ \int_{\Omega} \rho^\eps_{\init}(x)\, dx=1.$$

The behavior of the solutions $\rho^\eps,\phi^\eps$  strongly depends on the ratio of time scales $\alpha/\beta$. 
We note that it is natural to assume that the typical time of relaxation for the organisms ($\alpha$) is very different from that of the chemoattractant ($\beta$), since the underlying processes are of a very different nature. A classical simplification of this model consists in taking $\beta=0$ (or more generally $\alpha\gg\beta$), effectively assuming that the chemoattractant concentration relaxes instantaneously (or very quickly) to its equilibrium 
(determined by the distribution of the organisms).
The resulting \emph{parabolic-elliptic} model is a popular model with good mathematical properties, e.g., a $2$-Wasserstein gradient flow structure.
In the present paper, we focus on the opposite regime $\alpha\ll \beta$, which  amounts to assuming that the organisms (whose motion results from the simultaneous, antagonistic effects of strong repulsion and transport in the direction of $\na \phi$) reach a quasi-steady state much faster than the diffusion time for the chemoattractant.

These two opposite regimes lead to very different limits when $\eps\to0^+$. In fact, the asymptotic behavior of the solution of \eqref{eq:PKSeps} depends on the relative values of the ratio $\alpha/\beta$ and $\eps$.
More precisely, we claim the following:
\begin{enumerate} 
	\item When $\frac\alpha \beta \gg\eps$, the critical time scale is given by $\tau = \alpha$.  With this choice for $\tau$, the evolution of $\lim_{\eps\to0^+}\rho^\eps$ is then described by a {\bf Stefan free boundary problem} whose long time limit is a characteristic function $\theta \chi_{E}$.
	\item When $\frac\alpha \beta \gg\eps$ and the initial condition is a characteristic function $\theta \chi_{E}$, 
	we must look at the evolution of $\rho^\eps$ over the longer time scale $\tau = \alpha \eps^{-1}$. The evolution of $\lim_{\eps\to0^+}\rho^\eps$ is then described by a {\bf Hele-Shaw free boundary problem with surface tension}.
	\item When $\frac \alpha \beta \ll\eps$, the critical time scale is $\tau =\beta$. At that time scale, the evolution of $\lim_{\eps\to0^+}\rho^\eps$ is described by a  {\bf volume-preserving mean-curvature flow}.
	\item When $\frac \alpha \beta = \eps$ and if the initial condition is a characteristic function $\theta \chi_{E}$, then
	the critical time scale is $\tau = \beta = \alpha \eps^{-1}$. The evolution of $\lim_{\eps\to0^+}\rho^\eps$ is then described by a {\bf Hele-Shaw free boundary problem with surface tension and kinetic undercooling}.
\end{enumerate}

Points (1) and (2) were studied in \cite{KMW2,M23,KMJW} in the particular case $\beta=0$. 
The main goal of this paper is to make precise point (3), the convergence to volume-preserving mean-curvature flow {\bf  in the particular case $\alpha=0$}. The last point (4), which can be seen as an interpolation of points (2) and (3), is discussed  in some upcoming  work.
\medskip

Taking $\chi=\eta=1$ (for simplicity) and $\alpha=0$, $\tau =\beta$ (as explained above), we are thus led to the following system (which is an \emph{elliptic-parabolic Patlak-Keller-Segel system} with nonlinear diffusion):
\begin{equation}  \label{eq:PKS0} 
	\begin{cases}
		- \pdiv{\rho^\eps \nabla f'(\rho^\eps)} + \div(\rho ^\eps\na \phi^\eps) = 0 \quad & \mbox{in }  (0,\infty)\times \Omega, \\
		\pa_t \phi^\eps -  \Delta \phi^\eps = \eps^{-2}\left(  \rho^\eps-\sigma \phi^\eps \right) & \mbox{in }  (0,\infty)\times \Omega, \\
		(-\rho^\eps\nabla f'(\rho^\eps) + \rho^\varepsilon\nabla\phi^\varepsilon) \cdot n = \nabla\phi^\varepsilon \cdot n = 0	&	\text{on } (0,\infty) \times \partial\Omega, \\
		\phi^\varepsilon(0,\cdot) = \phi_\init^\varepsilon & \text{in } \Omega, \\ 
		\rho \ge 0 &\text{in } (0,\infty)\times\Omega, \\
		\int_\Omega \rho^\eps(t,x)\, dx = \int_\Omega \rho^\eps(0,x)\, dx =1	&	\text{for all } t \ge 0.
	\end{cases}
\end{equation} 
The first equation in this system should be understood as taking the long time limit of the first equation in the parabolic-parabolic system \eqref{eq:PKSeps}. In particular, the initial condition for $\rho^\eps$ is no longer relevant, but we need to add the positivity and mass conservation constraints, which are the final two conditions in the above  elliptic-parabolic system.
Indeed, the first equation in \eqref{eq:PKSeps} is a nonlinear diffusion equation with drift,
and when the drift term is a regular enough fixed potential $\phi$, it has been shown that the corresponding weak solution 
converges uniformly in space as $t \to +\infty$ to weak solutions of the corresponding quasilinear elliptic equation \cite[Theorem 5.1]{BH86}:
\begin{equation}\label{eq:rhom}
	\left\{
	\begin{aligned}
		- \pdiv{\rho \nabla f'(\rho)} + \div(\rho \na \phi ) = 0 &\qquad \text{in } \Omega, \\
		(-\rho\nabla f'(\rho) + \rho\nabla\phi) \cdot n = 0 &\qquad \text{on } \partial\Omega, \\
		\rho \ge 0 \qquad \text{and} \qquad &\int_{\Omega} \rho\, dx=1.
	\end{aligned}
	\right.
\end{equation}

However, it is also known that solutions of this limiting elliptic equation are typically not unique unless further restrictions on $\phi$ are imposed, e.g., strict concavity. Such restrictions are unreasonable for the systems we have in mind where $\phi$ describes the concentration of some chemoattractant. 

In view of this ill-posedness, we observe that the elliptic equation \eqref{eq:rhom} is the Euler-Lagrange equation for the following strictly convex minimization problem where $\phi \in H^1(\Omega)$ is prescribed:
\begin{equation}\label{eq:minrhom}
	\rho_\phi :=\argmin_{\rho \in  L^m(\Omega)} \mathscr{K}(\rho;\phi)	\quad\text{where}\quad
	\sK(\rho;\phi) :=
	\begin{cases} 
		\int_\Omega f(\rho) - \rho\phi \,dx  & \text{if } \rho \in \cP_\ac(\Omega) \cap L^m(\Omega), \\
		+\infty & \text{if } \rho \in L^m(\Omega) \setminus \cP_\ac(\Omega).
	\end{cases}
\end{equation}
We denote by $\cP_\ac(\Omega)$ the collection of probability measures on $\Omega$ that are absolutely continuous with respect to the $d$-dimensional Lebesgue measure on $\Omega$, and we always identify such a measure with its density. We thus choose this minimization problem's unique minimizer, denoted by $\rho_\phi$, to be the weak solution to our elliptic equation \eqref{eq:rhom}. 

While this choice may seem arbitrary, we point out that 
this minimizer is stable with respect to a large class of vanishing diffusions (linear or nonlinear).
In particular, the inclusion of a linear diffusive term $-\nu\lap\rho$ to \eqref{eq:rhom} (modeling some natural noise in the motion of the organisms) leads to a perturbed functional $\sK_\nu(\rho) :=\sK(\rho) + \nu \int_\Omega\rho \ln \rho \,dx $
and one can show that the minimizer $\rho_{\phi,\nu}$ of $\sK_\nu$,
which is the unique solution of the corresponding elliptic equation  \eqref{eq:rhom},
 converges to that of $\sK$ when $\nu\to0$ (in the sense of the narrow convergence of measure). 
In anticipation of the discussion in Section \ref{sec:results}, we also note that this choice of weak solution for $\rho$ is natural in view of the 
 assumption of convergence of the energy \eqref{eq:EA} that we will make to study  the limit $\eps\to0^+$.
\medskip

The existence and properties of the solution of the minimization problem \eqref{eq:minrhom} will be studied in Proposition \ref{prop:rho}. With this unambiguous definition of $\rho_\phi$,  we can now rewrite \eqref{eq:PKS0} as a parabolic equation 
\begin{equation} \tag{$P_\varepsilon$} \label{eq:phi}
	\left\{
	\begin{aligned}
		&\pa_t \phi^\eps -  \Delta \phi^\eps = \eps^{-2}\left( \rho_{\phi^\eps}-\sigma \phi^\eps \right) &&  \mbox{in }  (0,\infty)\times \Omega, \\
		&\nabla\phi^\varepsilon\cdot n = 0	&& \text{on } (0,\infty)\times\partial\Omega, \\
		&\phi^\varepsilon(0,\cdot) = \phi_\init^\varepsilon && \text{in } \Omega.
	\end{aligned}
	\right.
\end{equation}
with a nonlinear, nonlocal operator $\phi(t,\cdot)\mapsto \rho_{\phi(t,\cdot)}$ that maps the chemoattractant concentration profile at time $t$, i.e., $\phi(t,\cdot)$, to the solution $\rho_{\phi(t,\cdot)}$ of the minimization problem \eqref{eq:minrhom}.

\medskip

The asymptotic behavior of $\phi^\eps$ when $\eps\to 0^+$ is obviously strongly dependent on this nonlocal operator, which in turn depends on the choice of nonlinearity $\rho\mapsto f(\rho)$.
While we will list the necessary assumptions on $f$ in the next section, the typical 
nonlinearity for which our results hold is the aforementioned power law $f_m \colon \bR \to \bR \cup \{+\infty\}$
\begin{equation} \label{eq:fm}
	f_m(u):=\begin{cases}
		\frac{u^m}{m-1}	&	\text{if } u \ge 0, \\
		+\infty			&	\text{otherwise.}
	\end{cases} 
\end{equation}
with $m>2$. In that case, Proposition \ref{prop:rho} gives the explicit formula
$$ \rho_\phi(x) = c_m (\phi(x)-\ell(\phi))_+^{\frac 1 {m-1}} , \qquad \int_\Omega \rho_\phi(x)\, dx=1 \qquad \mbox{ with } c_m :=\left(\frac{m-1}{m}\right)^{\frac{1}{m-1}}$$
where $\ell(\phi)$ is a real number (the Lagrange multiplier) determined by the mass constraint on the organisms. The   parabolic equation  \eqref{eq:phi} then takes the form:
\begin{equation}\label{eq:ACe}
\pa_t\phi^\eps - \Delta \phi^\eps = \eps^{-2} \left( c_m (\phi^\eps-\ell(\phi^\eps))_+^{\frac 1 {m-1}} - \sigma\phi^\eps \right)
\end{equation}
and  we observe that for fixed $\ell$ (small enough), the function $v\mapsto c_m (v-\ell)_+^{\frac 1 {m-1}} - \sigma v$
is a  bistable nonlinearity. Equation \eqref{eq:ACe} can then be seen as a type of Allen-Cahn equation, and the 
 limit $\eps\to0^+$ of such an equation classically leads to mean-curvature flows.

	Of course, the fact that $\ell$ depends on $\phi$ in \eqref{eq:ACe} makes our equation highly nonlocal and means that new arguments will be needed to justify the limit. 
But  on the flip side, this dependence   comes from the mass constraint $\int_\Omega \rho_{\phi}(t,x)\, dx =1$ and 
this mass constraint implies a mass constraint on $\phi^\eps$ in the limit $\eps\to0^+$. Indeed, integrating \eqref{eq:phi} with respect to $x$ shows that the mass $m^\eps(t) = \int_\Omega \phi^\eps(t)\, dx$ solves
$\frac{d}{dt} m^\eps(t)  = \eps^{-2} (1-\sigma m^\eps(t))$ 
and  so
	\begin{equation}\label{eq:L1} 
		\int_\Omega \phi^\eps(t,x)\, dx \to \frac 1 \sigma 
		\qquad  \mbox{ as $\eps\to 0$ for all $t>0$}
	\end{equation}
(independently of the initial mass $\int \phi_{in}(x)\, dx$).
We thus see that while the equation \eqref{eq:phi} does not preserve the mass: The limiting flow should  be volume-preserving. 
	\medskip
	
	The main result of this paper (Theorem \ref{thm:main}) makes this heuristic rigorous and shows that the solution of \eqref{eq:phi} converges, when $\eps\to0^+$ to the characteristic function of a set whose evolution is described by the following \emph{volume-preserving mean-curvature flow}:
	\begin{equation}\tag{$P_0$}\label{eq:V} 
		 V = -  \kappa + \Lambda\qquad \mbox{ on } \pa E(t)\cap \Omega, \qquad |E(t)| = \frac 1 \sigma \qquad	\forall \, t > 0,
	\end{equation}
	where $V = V(t,x)$ is the normal velocity of a point $x$ on  the boundary $\partial E(t)$ of some set $E(t) \subset \cl \Omega$;  
	$\kappa(t,x)$ denotes the mean-curvature of $\pa E(t)$ at $x \in \pa E(t)$ (with the convention that $\kappa(t,x)>0$ at points $x \in \partial E(t)$ for which $\partial E(t)$ is locally convex); and $\Lambda=\Lambda(t)$ is a Lagrange multiplier that enforces the volume constraint. Further, the corresponding distribution of organisms $\rho_{\phi^\varepsilon}$ converges to a scalar multiple of the limit of $\phi^\varepsilon$, so this resolves the asymptotic dynamics we ultimately set-out to describe.

	\medskip

	We will prove our main result when $f=f_m$ with $m\in(2,\infty)$ and for more general nonlinearities that have similar properties.
As a final comment, we note that several recent works (e.g., \cite{KMW,KMW2}) have also focused on the related \emph{congested} (or incompressible, or hard-sphere) model, which corresponds to the nonlinearity 
$$
f_\infty(u) :=
\begin{cases}
	0 & \mbox{ if } 0 \le u\leq 1,\\
	+\infty & \text{ otherwise.}
\end{cases}
$$
While we will not investigate the corresponding limit in this paper, a similar convergence result is expected to hold in that framework as well.
One issue in that case is that, depending on $\phi$, the minimization problem \eqref{eq:minrhom} may not have a unique solution in that case (the corresponding energy is not strictly convex). This may be remedied by adding a small linear diffusion term that goes to zero with $\eps$, but we will not pursue this analysis in this paper.

\subsection{Outline of the paper}
The next section presents our main results in detail: The well-posedness of \eqref{eq:phi} (Theorem \ref{thm:existence}) and the convergence to the volume-preserving mean curvature flow (Theorem \ref{thm:main}).
Section \ref{sec:rho} is devoted to the minimization problem \eqref{eq:minrhom} which defines $\rho$ in terms of $\phi$ (Proposition \ref{prop:rho}).		
In Section \ref{sec:exist} we prove Theorem \ref{thm:existence} (existence of $\phi^\eps$ for $\eps>0$).
The rest of the paper is devoted to the limit $\eps\to0^+$:
First (Section \ref{sec:Gamma}) we prove the $\Gamma$-convergence of the energy (Theorem \ref{thm:Gamma}). Then 
in Section \ref{sec:conv} we prove the first part of Theorem \ref{thm:main} (phase separation, that is convergence of $\phi^\eps$ and $\rho_{\phi^\varepsilon}$ characteristic functions).
The derivation of the volume-preserving mean-curvature flow is detailed in Sections \ref{sec:V} and \ref{sec:mc} 
		
		\medskip


\section{Main results}\label{sec:results}
We now make precise the assumptions on the nonlinearity $f$ that are used in our analysis. This first collection of hypotheses is  used to prove  the well-posedness of \eqref{eq:phi}. Additional hypotheses needed to show  phase separation and derive the volume-preserving mean-curvature flow will be  described later.
\smallskip

\begin{wellPosednessHypotheses-f}
	\item \label{hyp:fMinimalAssumptions} $f \in C^1([0,\infty)) \cap C^2((0,\infty))$,  $f(0) = f'(0) = 0$, and $f$ is strictly convex on $[0,+\infty)$;
	\medskip
	
	\item \label{hyp:fGrowthCondition} \setcounter{growthCondition-f}{\value{wellPosednessHypotheses-fi}} There exists $m > 2$, $R_1 > 0$, and $C_1 > 0$ such that $f(u) \ge C_1 u^m$ whenever $u > R_1$.
\end{wellPosednessHypotheses-f}
\medskip

These assumptions are natural from the point of view of the variational methods we aim to use. Indeed, \ref{hyp:fGrowthCondition} implies $f$ is superlinear $\lim_{u \to +\infty} f(u)/u = +\infty$, and superlinearity and convexity of $f$ are necessary to ensure the lower semicontinuity w.r.t.\ narrow convergence of the entropy-like term $\rho \mapsto \int_\Omega f(\rho) \,dx$ in $\sK$ (see \cite[Proposition 7.7]{OTAM}). In view of the mass constraint on $\rho$, the function $f$ is defined up to an affine function, since the addition of any such function to $f$ only changes the energy $\sK$ by a constant. Thus, the assumption $f(0) = f'(0) = 0$ is no restriction. \emph{Strict} convexity \ref{hyp:fMinimalAssumptions} will imply that $\sK$ has a unique minimizer whenever $\phi$ is prescribed, so the map $\phi \mapsto \rho_\phi$ is uniquely defined.

On the one hand, these two assumptions will be enough to prove the existence of a solution to \eqref{eq:phi} and will be essential in the limit $\eps\to 0^+$. 
In particular, the restriction $m>2$ in \ref{hyp:fGrowthCondition} plays a key role in this limit (as explain in \cite{KMJW}). 
We note that the existence of a solution can be established when $m=2$, but this requires additional work  since the coercivity estimate 
\eqref{eq:coerc} does not hold in that case.
On the other hand, these two conditions do not appear to be sufficient to prove the uniqueness of a solution to \eqref{eq:phi}, which we will establish under the stronger condition:
\medskip

\begin{wellPosednessHypotheses-f}[start=\value{growthCondition-f}+1]
\item \label{hyp:fStronglyConvex}  	(uniform convexity)  There exist $\alpha>0$ such that $\inf_{u > 0} f''(u) \ge \alpha$. 
\end{wellPosednessHypotheses-f}
\medskip

This condition implies that the mapping $\phi \to \rho_\phi$ is Lipschitz $L^2(\Omega)\mapsto L^2(\Omega)$, which is sufficient to obtain uniqueness of our weak solution to the overall system. 
The typical nonlinearities $f$ for which   \eqref{eq:phi} has a solution and our convergence result  (Theorem \ref{thm:main} below) holds are power law nonlinearities given by:
	\begin{equation} \label{eq:porousMediumf}
		f(u) :=
		\begin{cases}
			\displaystyle\frac{u^m}{m-1}	& \text{if } u \ge 0, \\
			+\infty 		& \text{otherwise.}
		\end{cases}
		\quad m>2.
	\end{equation}
Such a function satisfies \ref{hyp:fMinimalAssumptions} and \ref{hyp:fGrowthCondition}, but not \ref{hyp:fStronglyConvex}.
An example of nonlinearity that satisfies all three assumptions is:
	\begin{equation} \label{eq:porousMedium-stronglyConvexf}
		f(u) :=
		\begin{cases}
			\displaystyle\frac{u^m}{m-1} + \frac{\alpha}{\beta(\beta-1)}u^\beta	& \text{if } u \ge 0, \\
			+\infty 		& \text{otherwise}
		\end{cases}		
	\end{equation}
	with $m >2$, $\alpha > 0$ and $\beta \in (1,2]$. 
It would also be possible to replace $\frac{u^\beta}{\beta(\beta-1)}$ by $u\ln u$ (which corresponds to the inclusion of $-\alpha\lap \rho$ in \eqref{eq:rhom}) although \ref{hyp:fMinimalAssumptions} would have to be adjusted in such a case. 
In particular, the limit $\eps\to 0^+$ would be significantly different, since the density and chemoattractant separate into \emph{two positive phases}, i.e., two regions of differing positive densities, instead of one. The resulting volume-preserving mean-curvature flow is thus two-phase instead of one-phase.

\subsection{Well-posedness of  \eqref{eq:phi}}
Our first task is to prove the existence and uniqueness of $\rho_\phi$, the solution of the minimization problem \eqref{eq:minrhom} for a given $\phi \in H^1(\Omega)$. 

We first prove:
\begin{proposition}[Well-posedness of \eqref{eq:rhom}]\label{prop:rho}
	Suppose $f$ satisfies \ref{hyp:fMinimalAssumptions} and \ref{hyp:fGrowthCondition} (it is enough to assume that $m\in[2,+\infty)$).
	\begin{enumerate}[label=(\roman*)] 
		\item (existence and uniqueness) Given $\phi\in L^2(\Omega)$, there exists a unique $\rho_\phi \in \cP_\ac(\Omega) \cap L^m(\Omega)$ that solves the minimization problem \eqref{eq:minrhom}. Furthermore, we have
\begin{equation}\label{eq:rhophibd} 
\| \rho\|_{L^m(\Omega)} ^m \leq C+C\|\phi\|_{L^2(\Omega)}^2
\end{equation}
for some constant $C>0$ depending only on $f$ and $\Omega$.		
		\item (support of the minimizer) Under the same conditions, there exists $\ell \in \bR$ (called \emph{the Lagrange multiplier for the mass constraint}) such that
		\begin{equation} \label{eq:supportOfRho}
			f'(\rho_\phi) - \phi = -\ell	\qquad \rho_\phi\text{-a.e.}
		\end{equation}
		
		\item (Euler-Lagrange equation) 
		If $\phi \in H^1(\Omega)$, then $f'(\rho_\phi) \in H^1(\Omega)$ and
$ - \rho_\phi \na f'(\rho_\phi) + \rho_\phi \na \phi = 0$ a.e.\  in $  \Omega$.
In particular,  $\rho_\phi$ is a weak solution of the quasilinear elliptic equation \eqref{eq:rhom}; that is,
		\begin{equation}\label{eq:rhoWeakSolution}
			\int_\Omega \rho_\phi \nabla f'(\rho_\phi) \cdot \nabla \zeta - \rho_\phi \nabla\phi \cdot \nabla\zeta \,dx = 0 	\qquad	\forall \, \zeta \in C^\infty(\cl\Omega).
		\end{equation}
		
	\end{enumerate}	
\end{proposition}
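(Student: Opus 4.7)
For part (i), I would apply the direct method of the calculus of variations. Assumption \ref{hyp:fGrowthCondition} combined with $f\ge 0$ (which follows from $f(0)=f'(0)=0$ and convexity) gives $f(u)\ge C_1 u^m - C$ for all $u\ge 0$. Since $\|\rho\|_{L^1}=1$, Hölder interpolation yields $\|\rho\|_{L^2}\le \|\rho\|_{L^m}^{m/(2(m-1))}$, and because $m\ge 2$ this exponent is at most $1$; Young's inequality then bounds
\[
\int_\Omega \rho\phi\,dx \le \tfrac{C_1}{2}\|\rho\|_{L^m}^m + C\bigl(1+\|\phi\|_{L^2}^2\bigr),
\]
so $\sK(\cdot\,;\phi)$ is coercive in $L^m$ on the admissible set and bounded below. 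A minimizing sequence $\rho_n$ then admits a weak $L^m$-limit $\rho_*\ge 0$ with $\int \rho_*\,dx=1$; convexity of $f$ gives lower semicontinuity of the entropy part while the linear term passes by weak convergence, so $\rho_*$ is a minimizer. Uniqueness follows from the strict convexity of $\sK(\cdot\,;\phi)$ on a convex set. The bound \eqref{eq:rhophibd} is then obtained by testing $\sK(\rho_\phi;\phi)\le \sK(|\Omega|^{-1};\phi)$ and reusing the same Young estimate.

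For part (ii), I would use a standard KKT argument. For any $\eta \in \cP_{\ac}(\Omega)\cap L^m(\Omega)$, the convex combination $(1-t)\rho_\phi + t\eta$ is admissible for $t\in[0,1]$, and convexity of $f$ makes the difference quotients $t^{-1}[f(\rho_\phi + t(\eta-\rho_\phi))-f(\rho_\phi)]$ monotone in $t$, with upper envelope $f(\eta)-f(\rho_\phi)\in L^1$. Monotone convergence then yields the variational inequality
\[
\int_\Omega \bigl(f'(\rho_\phi)-\phi\bigr)(\eta-\rho_\phi)\,dx \ge 0
\]
for all admissible $\eta$. Setting $-\ell := \essinf_\Omega (f'(\rho_\phi)-\phi)$, testing against $\eta$ concentrated near a point forces $f'(\rho_\phi)-\phi \ge -\ell$ a.e., with equality $\rho_\phi$-a.e.; this is \eqref{eq:supportOfRho}. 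On $\{\rho_\phi = 0\}$ the inequality reduces to $\phi\le \ell$ since $f'(0)=0$.

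For part (iii), strict convexity of $f$ together with $f'(0)=0$ makes $f'\colon[0,\infty)\to[0,\infty)$ a continuous strictly increasing bijection onto its range, so $\rho_\phi > 0$ precisely where $f'(\rho_\phi)>0$. Combined with (ii) this identifies
\[
f'(\rho_\phi) = (\phi-\ell)_+ \qquad \text{a.e.\ in } \Omega.
\]
Since $\phi\in H^1(\Omega)$, the Stampacchia chain rule for Sobolev truncations gives $f'(\rho_\phi)\in H^1(\Omega)$ with $\nabla f'(\rho_\phi) = \chi_{\{\phi>\ell\}}\nabla\phi$. As $\rho_\phi$ is supported in $\{\phi>\ell\}$, this yields $\rho_\phi\nabla f'(\rho_\phi) = \rho_\phi\nabla\phi$ a.e.\ in $\Omega$, and \eqref{eq:rhoWeakSolution} follows immediately upon pairing with $\nabla\zeta$ and integrating; no boundary contribution appears because the integrand already vanishes pointwise a.e.

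The delicate step is the differentiation in (ii), since our hypotheses place no explicit growth bound on $f'$. Monotonicity of the convex difference quotient together with $L^1$-integrability of $f(\eta)$ and $f(\rho_\phi)$ is exactly what is needed to bypass this, so that the variational inequality can be obtained without any further assumption on $f$. Once the inequality is available, the identification of $\ell$ and the formula $f'(\rho_\phi)=(\phi-\ell)_+$ are essentially algebraic.
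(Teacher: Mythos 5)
Your proposal is correct and, for parts (i) and (iii), follows essentially the paper's route: the direct method plus strict convexity for existence and uniqueness, a coercivity estimate obtained by absorbing $\int_\Omega \rho\phi\,dx$ into $\tfrac{C_1}{2}\|\rho\|_{L^m(\Omega)}^m$ (the paper uses pointwise Young's inequality with exponents $m$, $\tfrac{m}{m-1}$ rather than your $L^1$--$L^m$ interpolation, which is immaterial), the bound \eqref{eq:rhophibd} by testing against the uniform density, and then the identity $f'(\rho_\phi)=(\phi-\ell)_+$ combined with the Sobolev chain rule for $(\phi-\ell)_+\in H^1(\Omega)$. The genuine difference is in how the multiplier is identified in (ii). The paper derives the same variational inequality via monotone convex difference quotients, then shows $F:=f'(\rho_\phi)-\phi$ is constant on $\{\rho_\phi>0\}$ by a mass-rearrangement contradiction between the sets $\{F\ge-\ell\}\cap\{\rho_\phi>0\}$ and $\{F<-\ell\}\cap\{\rho_\phi>0\}$, and later, inside the proof of (iii), needs a second contradiction argument (testing with $\tfrac{1}{|A|}\chi_A$, $A=\{\phi>\ell\}\cap\{\rho_\phi=0\}$) to get $(\phi-\ell)_+=0$ on $\{\rho_\phi=0\}$. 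You instead set $-\ell:=\essinf_\Omega F$ and test with normalized indicators of $\{F<-\ell+\delta\}$ to obtain $\int_\Omega F\rho_\phi\,dx\le-\ell$, whence $F=-\ell$ $\rho_\phi$-a.e.; with this choice of $\ell$ the inequality $\phi\le\ell$ on $\{\rho_\phi=0\}$ is automatic from the definition of the essential infimum, so the paper's extra step in (iii) disappears. This is a clean and slightly more economical organization. Two points you should make explicit: first, your phrase ``testing forces $F\ge-\ell$ a.e.'' is backwards, since that inequality is the definition of the essential infimum, and what the testing actually yields is $\int_\Omega F\rho_\phi\,dx\le-\ell$, which combined with $F\ge-\ell$ and $\int_\Omega\rho_\phi\,dx=1$ gives the $\rho_\phi$-a.e.\ equality; second, you should verify that $\essinf_\Omega F>-\infty$ and that $\int_\Omega F\rho_\phi\,dx$ is well defined, both of which follow because $f'(\rho_\phi)\rho_\phi\ge0$ and $\phi\rho_\phi\in L^1(\Omega)$ by Cauchy--Schwarz (using $m\ge2$ and $\Omega$ bounded), so the same indicator test rules out $\essinf_\Omega F=-\infty$.
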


Next, we define the energy functional associated to \eqref{eq:phi}: 
\begin{align*}
	\sE_\eps (\phi)
	&: = \inf_{\rho \in L^m(\Omega) } \left\{ \frac 1 \eps \int_\Omega f(\rho) -  \rho \phi + \frac \sigma 2   |\phi|^2 \, dx + \frac{\eps} 2 \int_\Omega  |\na \phi|^2\, dx\right\}  \\
	& =  \frac 1 \eps \int_\Omega f(\rho_\phi) -  \rho_\phi \phi + \frac\sigma 2   |\phi|^2 \, dx +\frac {\eps} 2  \int_\Omega  |\na \phi|^2\, dx,
\end{align*}
where $\rho_\phi$ is the unique solution to \eqref{eq:minrhom} for a given $\phi\in H^1(\Omega)$. A formal computation shows that smooth solutions of \eqref{eq:phi} satisfy the energy dissipation inequality
\begin{equation}\label{eq:energy0}
	\sE_\eps(\phi^\eps(t)) +   \int_0^t  \int_\Omega \eps |\pa_t \phi^\eps(t,x)|^2\, dx \, dt \leq \sE_\eps(\phi_{\init}).
\end{equation}
In fact, \eqref{eq:phi} can be interpreted as a gradient flow for this energy functional with respect to the $L^2(\Omega)$-inner product, and we exploit this observation in order to prove the following result:
\begin{theorem}[Well-posedness of \eqref{eq:phi}]\label{thm:existence}
	Suppose $f$ satisfies \ref{hyp:fMinimalAssumptions} and  \ref{hyp:fGrowthCondition}. For any $T,\varepsilon,\sigma>0$ and nonnegative $\phi_{\init}\in H^1(\Omega)$, 
there exists a nonnegative strong solution $\phi^\eps$ of \eqref{eq:phi} such that		
	\begin{equation}\label{eq:phispace}
	\phi^\eps\in H^1(0,T; L^2(\Omega)) \cap L^\infty(0,T;H^1(\Omega)) \cap   L^2(0,T;H^2(\Omega))
	\end{equation}
and the energy dissipation inequality \eqref{eq:energy0} holds. 	
	If additionally $f$ satisfies \ref{hyp:fStronglyConvex}, then this solution is unique.
	
	Finally, if $f$ satisfies $f'(u) \ge u/C_2$ whenever $u> R_2$ and $\phi_\init \in L^\infty(\Omega)$, 
then $\phi^\eps$ and $\rho_{\phi^\eps}$ belong to $L^\infty(0,T; L^\infty(\Omega))$ and 
we have the estimate
	\begin{equation}\label{eq:linftyEstimate}
		\| \phi^\eps(t) \|_{L^\infty(\Omega)} \le C( \norm{\phi_\init}_{L^\infty(\Omega)}, C_2, R_2, |\Omega|,f)  \exp\left( \frac{C_2 - \sigma}{\varepsilon^2} t \right) \qquad \forall t\geq 0.
	\end{equation}
\end{theorem}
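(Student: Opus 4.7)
The plan is to construct $\phi^\eps$ by Galerkin approximation in the Neumann-Laplacian eigenbasis $\{e_k\}_{k\geq 1}$. Writing $\phi_N(t) = \sum_{k=1}^N c_k(t) e_k$ with $\phi_N(0) = P_N\phi_\init$, the Galerkin projection of \eqref{eq:phi} becomes an ODE system
\begin{equation*}
c_k'(t) + (\lambda_k + \eps^{-2}\sigma)\, c_k(t) = \eps^{-2}\langle \rho_{\phi_N(t)}, e_k\rangle_{L^2(\Omega)}, \qquad 1 \leq k \leq N,
\end{equation*}
whose right-hand side is continuous in $c=(c_1,\dots,c_N)$. To justify this I would argue: by \eqref{eq:rhophibd}, $\rho_\phi$ is bounded in $L^m(\Omega)$ as $\phi$ varies over a bounded set of $L^2(\Omega)$, and by the uniqueness of the minimizer in Proposition \ref{prop:rho} together with weak-$L^m$ compactness, any $L^2$-convergent sequence $\phi_n\to\phi$ gives $\rho_{\phi_n}\rightharpoonup \rho_\phi$ in $L^m(\Omega)$. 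In finite dimension this implies continuity of $c\mapsto \langle \rho_{\phi_N}, e_k\rangle$, so Peano's theorem produces local-in-time solutions.

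The central a priori bound is a sharp energy identity, obtained by testing the Galerkin equation against $\partial_t \phi_N$ and using the envelope identity $\frac{d}{dt}\int [f(\rho_{\phi_N}) - \rho_{\phi_N}\phi_N]\,dx = -\int \rho_{\phi_N}\,\partial_t\phi_N\,dx$, which is a direct consequence of $\rho_{\phi_N}(t)$ being the minimizer of $\sK(\cdot;\phi_N(t))$. The result is
\begin{equation*}
\sE_\eps(\phi_N(t)) + \eps\int_0^t\int_\Omega |\partial_t\phi_N|^2\,dx\,dt = \sE_\eps(\phi_N(0)).
\end{equation*}
Coercivity of $\sE_\eps$ on $H^1(\Omega)$---which holds because \ref{hyp:fGrowthCondition} forces the Legendre conjugate $f^*(\phi)$ to grow like $|\phi|^{m/(m-1)}$, slower than the absorbing term $\tfrac\sigma2\phi^2$---continues $\phi_N$ globally and provides uniform bounds in $L^\infty(0,T;H^1(\Omega)) \cap H^1(0,T;L^2(\Omega))$. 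Rearranging \eqref{eq:phi} as $\Delta\phi_N = \partial_t\phi_N - \eps^{-2}(\rho_{\phi_N} - \sigma\phi_N)$ and using $\rho_{\phi_N}\in L^\infty(0,T;L^m)\subset L^\infty(0,T;L^2)$ then furnishes the missing $L^2(0,T;H^2)$ bound via elliptic regularity.

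Passing $N\to\infty$ via Aubin-Lions yields a subsequence $\phi_N\to\phi^\eps$ strongly in $C([0,T];L^2(\Omega))$, hence $\rho_{\phi_N}\rightharpoonup \rho_{\phi^\eps}$ by the continuity argument above; this is enough to identify $\phi^\eps$ as a strong solution in the class \eqref{eq:phispace}, and \eqref{eq:energy0} follows by weak lower semi-continuity. Non-negativity is preserved by testing \eqref{eq:phi} against $(\phi^\eps)_-$ and using $\rho_{\phi^\eps}\geq 0$. For uniqueness under \ref{hyp:fStronglyConvex}, I would first establish the Lipschitz estimate $\|\rho_\phi - \rho_\psi\|_{L^2} \leq \alpha^{-1}\|\phi-\psi\|_{L^2}$. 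Adding the strong-convexity inequality $\sK(\rho_\psi;\phi) - \sK(\rho_\phi;\phi) \geq \tfrac\alpha2\|\rho_\phi - \rho_\psi\|_{L^2}^2$ to its symmetric counterpart, where both are justified by carefully treating $\{\rho_\phi = 0\}$ via the KKT condition $\phi\leq \ell(\phi)$ and exploiting the common unit mass to discard Lagrange multipliers, produces $\int(\rho_\phi-\rho_\psi)(\phi-\psi)\,dx \geq \alpha\|\rho_\phi-\rho_\psi\|_{L^2}^2$, and Cauchy-Schwarz concludes. The difference $w = \phi_1^\eps - \phi_2^\eps$ of two solutions then solves a linear heat equation with Lipschitz source, and Gronwall gives $w\equiv 0$.

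For \eqref{eq:linftyEstimate}, the key ingredient is a uniform lower bound on the Lagrange multiplier: using the pointwise formula $\rho_{\phi^\eps} = (f')^{-1}((\phi^\eps - \ell)_+)$ and $\int\rho_{\phi^\eps}\,dx = 1$, the monotonicity of $(f')^{-1}$ forces $-\ell \leq f'(1/|\Omega|)$, so the growth condition $f'(u)\geq u/C_2$ for $u > R_2$ (equivalently $(f')^{-1}(v)\leq C_2 v + R_2$) yields the pointwise bound $\rho_{\phi^\eps}(x) \leq C_2\phi^\eps(x) + C(C_2,R_2,|\Omega|,f)$. Plugging this into \eqref{eq:phi} gives $\partial_t\phi^\eps - \Delta \phi^\eps \leq \eps^{-2}[(C_2-\sigma)\phi^\eps + C]$, and the parabolic maximum principle---comparing to the spatially constant ODE supersolution $M'(t) = \eps^{-2}[(C_2-\sigma)M(t) + C]$ with $M(0) = \|\phi_\init\|_{L^\infty}$---yields \eqref{eq:linftyEstimate}. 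The principal obstacle throughout is the nonlocality of $\ell(\phi)$: every quantitative estimate of $\rho_\phi$ in terms of $\phi$ must route through a one-sided control of this implicitly defined multiplier, so each of the continuity, Lipschitz, and $L^\infty$ arguments requires separately unpacking the variational characterization of Proposition \ref{prop:rho}.
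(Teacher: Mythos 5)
Your proposal is correct in substance but follows a genuinely different construction from the paper's. The paper builds $\phi^\eps$ by a minimizing movements scheme: it iteratively minimizes $(\rho,\phi)\mapsto \eps\|\phi-\phi^{n-1}_\tau\|_{L^2}^2/(2\tau)+\sG(\rho,\phi)$, gets the discrete energy--dissipation estimate by comparison with the previous step, and passes $\tau\to0$ through piecewise-constant/affine interpolants, Arzel\`a--Ascoli, and a pointwise-in-time identification of the weak $L^m$ limit of $\ol\rho_\tau$ with $\rho_{\phi(t)}$; this yields \eqref{eq:energy0} with a spurious factor $\tfrac12$ on the dissipation. You instead run a Galerkin scheme in the Neumann eigenbasis and obtain the exact energy identity by testing with $\pa_t\phi_N$ together with an envelope (Danskin-type) identity for the value function $\phi\mapsto\inf_\rho\sK(\rho;\phi)$. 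That identity is the one step you assert rather than prove: it does hold, since the value function is an infimum of functions affine in $\phi$, hence concave and locally Lipschitz on $L^2(\Omega)$ with $-\rho_\phi$ a supergradient, and a two-sided difference-quotient argument using uniqueness of the minimizer and the weak $L^m$-continuity of $\phi\mapsto\rho_\phi$ (the same argument you invoke for Peano) gives the chain rule along the $C^1$ Galerkin curve --- this should be spelled out. Both routes rely on the $m>2$ coercivity (your remark that $f^*(\phi)$ grows like $|\phi|^{m/(m-1)}$, strictly slower than $\tfrac\sigma2\phi^2$, is exactly \eqref{eq:Gcoerc}), and both leave the $L^2(0,T;H^2(\Omega))$ claim to Neumann elliptic regularity. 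What your route buys: a sharp energy identity at the approximate level, hence \eqref{eq:energy0} without the loss of the factor $\tfrac12$, plus an explicit nonnegativity proof by testing with the negative part, which the paper only asserts; what it costs is the Danskin justification and the eigenbasis machinery, whereas the minimizing-movement scheme gets dissipation and the identification $\ol\rho_\tau=\rho_{\ol\phi_\tau}$ for free from the variational structure (and is the structure reused in the $\eps\to0$ analysis). Your uniqueness and $L^\infty$ arguments coincide with the paper's (Proposition \ref{prop:rhoStability} plus Gronwall, and the barrier comparison with the bound $(-\ell)_+\le f'(1/|\Omega|)$); in fact your symmetrized strong-convexity argument gives the Lipschitz constant $1/\alpha$ rather than $2/\alpha$, and the integral optimality condition \eqref{eq:rhoOptimalityCondition} makes the KKT bookkeeping on $\{\rho_\phi=0\}$ you mention unnecessary.
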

Since \eqref{eq:phispace} implies $\phi^\eps\in C^{0,\frac{1}{2}}([0,T]; L^2(\Omega))$, the initial condition is satisfied 
in the classical sense. This also shows that the function $t \mapsto \rho_{\phi^\eps(t)}$ is well-defined (with  $\rho_{\phi^\eps(t)}$  the unique minimizer of \eqref{eq:minrhom} with $\phi  = \phi^\eps(t)\in L^2(\Omega)$). Finally, we note that if $C_2 \le \sigma$, then \eqref{eq:linftyEstimate}  is uniform-in-$\varepsilon$.
This is the case when $f$ is given by $f_m$ with $m>2$ ($C_2$ can be chosen arbitrarily small in that case), which is consistent with the convergence result of the next section ($\phi^\eps$ converges to a multiple of a characteristic function as $\eps\to0^+$).

\subsection{Singular limit and derivation of volume-preserving mean-curvature flow}
The second part of the paper concerns the limit $\eps\to 0^+$ of  \eqref{eq:phi}. 
A key role in this limit is played by the energy $\sE_\eps$ dissipated by $\phi^\eps$,
which can be rewritten as 
\begin{equation}\label{eq:energyE_eps}
	\sE_\eps (\phi) =  \inf_{\rho\in L^m(\Omega)} \left\{ \frac 1 \eps \int_\Omega f(\rho)  - \frac  1 {2\sigma} \rho^2  \, dx +  \frac 1 \eps \int_\Omega\frac 1 {2\sigma}  (\rho-\sigma \phi )^2\, dx +\eps  \int_\Omega \frac1  2 |\na \phi |^2\, dx \right\}.
\end{equation}
Since $\int_\Omega \rho=1$, we can augment this energy with an additional constant $\frac{a}{\varepsilon}\int_\Omega \rho$ (for a constant $a \in \bR$ to be chosen later), and it will remain dissipated by the solution. We thus define:
\begin{align}
	\sJ_\eps(\phi) 
	& :=\inf_{\rho\in L^m(\Omega)} \left\{ \frac 1 \eps \int_\Omega f(\rho) + a\rho - \frac 1 {2\sigma} \rho^2  \, dx +  \frac 1 \eps \int_\Omega\frac 1 {2\sigma}  (\rho-\sigma \phi )^2\, dx +\eps  \int_\Omega \frac1  2 |\na \phi |^2\, dx \right\} \nonumber \\
	& = \frac 1 \eps \inf_{\rho\in L^m(\Omega)} \left\{  \int_\Omega W(\rho) + \frac 1 {2\sigma}  (\rho-\sigma \phi )^2\, dx \right\}+\eps  \int_\Omega \frac 1 2 |\na \phi |^2\, dx. \label{eq:J0}
\end{align}
Above, we have introduced
\begin{equation}\label{eq:doubleWellPotential-Density}
	W(u):=f(u) + au - \frac 1 {2\sigma} u^2.
\end{equation}
When $f$ satisfies \ref{hyp:fGrowthCondition} (more than quadratic growth), this function is bounded below and in the particular case  where $f = f_m$ is the power law given by \eqref{eq:fm} with $m \in (2,\infty)$, it is not difficult to check that $W$ is a double-well potential with a singular well at $u=0$ and a smooth well at $u  = \theta_m :=\left(\frac 1{2\sigma}\right)^{\frac{1}{m-2}}$ (see Figure \ref{fig:1}.
\begin{figure}[]
	\includegraphics[width=.35\textwidth]{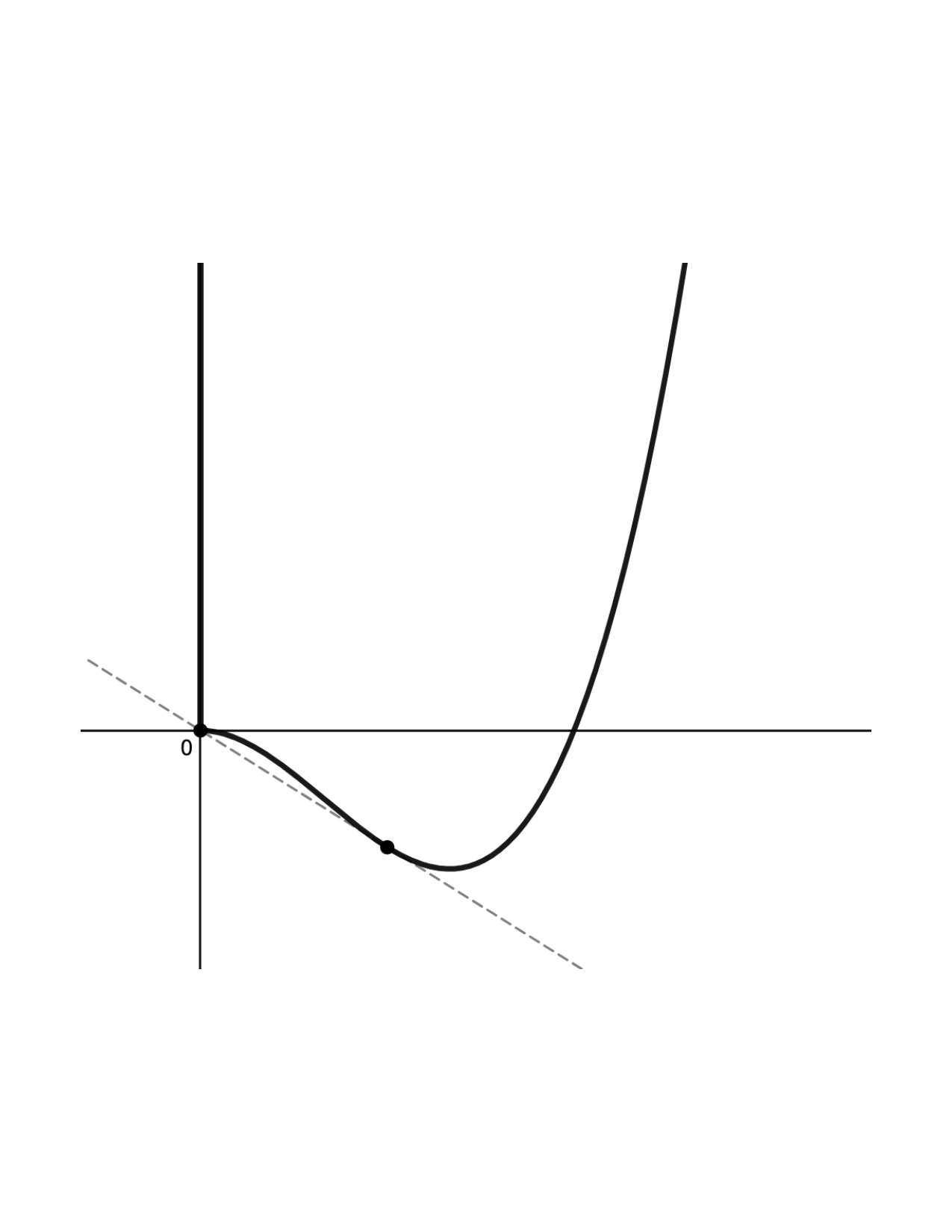}
 	 		\caption{The double-well potential $f(u) - \frac{1}{2\sigma} u^2$ when $f(u)=\frac 1 2 u^3$}
			 	 		\label{fig:1}
\end{figure}

If we choose $a=a_m:=\frac{m-2}{m-1} \theta_m^{m-1}$, we have $W(u)\geq 0$ for all $u\geq 0$ with equality only at $u=0$ and $u=\theta_m$. 
The addition of this constant $\frac{a}{\varepsilon}\int\rho\, dx$ to the energy, while immaterial for the dissipation, is thus essential for the limit $\varepsilon \to 0^+$, as it ensures that the energy has a finite limit when $\varepsilon \to 0^+$. 

\medskip

This double-well property of $W$ is the key feature that will lead to phase separation and mean-curvature phenomena. 
We do not need to restrict ourselves to power law nonlinearities, but we will make the following assumption in this section:
\medskip

\begin{singularLimitHypotheses-f}[start=4]
	\item \label{hyp:fSecondBounds} The function $u\mapsto W(u)$ defined by \eqref{eq:doubleWellPotential-Density}  is a double-well potential with wells at $u=0$ and $u=\theta>0$. The constant $a\in \bR$ in \eqref{eq:doubleWellPotential-Density} is then chosen so that
	\begin{equation}\label{eq:dw}
	W(u) \geq 0, \; \forall u\geq 0 \quad\mbox{ and } \quad  \{ W = 0 \} = \{0,\theta\} \mbox{ for some $\theta>0$}.
\end{equation}
\end{singularLimitHypotheses-f}
\medskip

We refer to \cite[Section 2.1]{M23} for assumptions on $f$ that ensure that \ref{hyp:fSecondBounds} holds
and we note that the key idea is that we should have 
$f''(u) \le \frac{1}{\sigma}$ for small $u$ and $f''(u) \ge \frac{1}{\sigma}$ for large $u$.
Heuristically, it is the fact that the convexity of  $ f(u)  - \frac 1 {2\sigma} u^2$ changes that leads to phase separation phenomena.

\medskip

Next, we introduce the function\footnote{When $W$ is a convex function, this function $W_\sigma$ is known as the \emph{Moreau-Yosida regularization of $W$}.}
\begin{equation}\label{eq:g}
	W_\sigma (v) : = \inf_{u\geq 0}  \left[ W(u) +  \frac {1}{ 2\sigma} (u- v)^2\right]. 
\end{equation}
We then see that
\begin{equation}\label{eq:JW}
	\sJ_\eps(\phi) \geq \frac 1 \eps   \int_\Omega  W_\sigma (\sigma \phi ) \, dx  +\eps  \int_\Omega \frac 1 2 |\na \phi |^2\, dx.
\end{equation}
We note that \ref{hyp:fSecondBounds}  implies that $W_\sigma(v)\geq 0$ for all $v\geq 0$ and it is easy to check that $W_\sigma(0)=W_\sigma(\theta) =0$.
In fact, $W_\sigma$ is also a double-well potential (see Lemma \ref{lem:gF1}), so that the functional on the right-hand side is a Modica-Mortola functional that, as $\varepsilon \to 0^+$, $\Gamma$-converges w.r.t.\ $L^1(\Omega)$ to (a multiple of) the perimeter functional.
We prove a corresponding result for $\sJ_\eps$. 

\begin{theorem}[$\Gamma$-convergence of \eqref{eq:J0}]\label{thm:Gamma}
	Assume that $\Omega$ is an open subset of $\bR^d$ with Lipschitz boundary and that $f$ satisfies \ref{hyp:fMinimalAssumptions}, \ref{hyp:fGrowthCondition} and \ref{hyp:fSecondBounds}.
	Then as $\varepsilon \to 0^+$
	the functional  $\sJ_\eps$ defined by \eqref{eq:J0} $\Gamma$-converges to $\sJ_0$ defined by
	$$
	\sJ_0(\phi):=\begin{cases}
		\displaystyle \gamma  \int_\Omega |\na \phi|   & \mbox{ if } \phi \in BV(\Omega;\{0,\theta/\sigma\}) \mbox{ and } \int_\Omega \phi\, dx =1/\sigma, \\
		+\infty & \mbox{ otherwise in $L^1(\Omega)$}
	\end{cases}
	$$
	with
	\begin{equation}\label{eq:sigma}
		\gamma:=\frac 1 {\theta}\int_0^{\theta } \sqrt{2 W_\sigma (v)}\, dv.
	\end{equation}
	More precisely, we have:
	\begin{enumerate}[label=(\roman*)] 
		\item {$\bm{\liminf}$ \textbf{property}}: For all $\phi \in L^1(\Omega)$, positive sequences $\varepsilon_n \to 0$, and $\pseq{\phi^{\varepsilon_n}}_n \subset L^1(\Omega)$ such that $\phi^{\eps_n} \to \phi$ in $L^1(\Omega)$, we have
		$$\liminf_{n\to \infty } \sJ_{\eps_n}(\phi^{\eps_n}) \geq \sJ_0(\phi).$$
		\item {$\bm{\limsup}$ \textbf{property}}: For all $\phi \in L^1(\Omega)$ and positive sequences $\eps_n \to 0$, there exists a sequence $\pseq{\phi^{\eps_n}}_n\subset L^1(\Omega)$ such that $\phi^{\eps_n}\to\phi$ in  $L^1(\Omega)$ and 
		$$\limsup_{n\to \infty}  \sJ_{\eps_n}(\phi^{\eps_n}) \leq \sJ_0(\phi).$$
	\end{enumerate}
\end{theorem}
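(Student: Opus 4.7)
The plan is to exploit the lower bound \eqref{eq:JW}, which dominates $\sJ_\eps$ by a Modica--Mortola-type functional in the single variable $\phi$:
\[
\sJ_\eps(\phi) \geq F_\eps(\phi) := \frac{1}{\eps}\int_\Omega W_\sigma(\sigma\phi)\, dx + \eps \int_\Omega \frac{|\na\phi|^2}{2}\, dx.
\]
Since $W_\sigma$ is a non-negative double-well potential with wells at $0$ and $\theta$ (Lemma \ref{lem:gF1}), the right-hand side is, after the rescaling $v = \sigma\phi$, a classical Modica--Mortola functional whose $\Gamma$-limit is a multiple of the perimeter. The whole argument thus reduces to classical $\Gamma$-convergence machinery, together with a careful treatment of the mass constraint $\int\rho = 1$ built into the inner infimum. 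The formula \eqref{eq:sigma} for $\gamma$ is precisely the expected surface-tension formula, where $\Phi(v) := \int_0^v \sqrt{2W_\sigma(s)}\, ds$ is the primitive arising from the Modica trick.

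\textbf{Liminf inequality.} Given $\phi^{\eps_n} \to \phi$ in $L^1(\Omega)$ with $\liminf_n \sJ_{\eps_n}(\phi^{\eps_n})$ finite, I would first apply the Modica trick (AM--GM) to the bound above to get
\[
\sJ_{\eps_n}(\phi^{\eps_n}) \geq \int_\Omega \sqrt{2 W_\sigma(\sigma\phi^{\eps_n})}\, |\na\phi^{\eps_n}|\, dx = \frac{1}{\sigma}\int_\Omega |\na (\Phi\circ\sigma\phi^{\eps_n})|\, dx.
\]
Lower semicontinuity of the total variation with respect to $L^1$ convergence then yields $\liminf_n \sJ_{\eps_n}(\phi^{\eps_n}) \geq \frac{1}{\sigma}\int_\Omega |\na(\Phi\circ\sigma\phi)|$, while Fatou applied to $\int W_\sigma(\sigma\phi^{\eps_n}) \leq \eps_n \sJ_{\eps_n}(\phi^{\eps_n}) \to 0$ forces $\sigma\phi \in \{0, \theta\}$ a.e., so $\phi = (\theta/\sigma)\chi_E$ for some measurable $E \subset \Omega$. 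A direct computation then identifies the right-hand side as $\gamma\int_\Omega|\na\phi|$. To obtain the volume constraint $|E| = 1/\sigma$, let $\rho^{\eps_n}$ attain the inner infimum in $\sJ_{\eps_n}(\phi^{\eps_n})$; the identity
\[
\int_\Omega (\rho^{\eps_n} - \sigma\phi^{\eps_n})^2\, dx \leq 2\sigma\eps_n\sJ_{\eps_n}(\phi^{\eps_n}) \to 0
\]
together with Cauchy--Schwarz and $\int\rho^{\eps_n} = 1$ yields $\int\phi = 1/\sigma$ in the limit.

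\textbf{Limsup inequality.} I may assume $\sJ_0(\phi) < \infty$, so $\phi = (\theta/\sigma)\chi_E$ with $|E| = 1/\sigma$; by BV-density it is enough to construct a recovery sequence when $\partial E \cap \Omega$ is smooth, the general case following by a diagonal argument. Let $d(x)$ denote the signed distance to $\partial E$ (positive inside $E$) and $\psi_\infty \colon \bR \to [0,\theta/\sigma]$ the optimal 1D transition profile solving $\psi_\infty' = \sqrt{2 W_\sigma(\sigma\psi_\infty)}$ with $\psi_\infty(\pm\infty) \in \{0,\theta/\sigma\}$. I would set $\phi^{\eps_n}(x) := \psi_\infty(d(x)/\eps_n)$ (suitably truncated outside a tubular neighborhood of $\partial E$), and take for the companion density the pointwise minimiser $\rho^{\eps_n}(x) \in \argmin_{u\geq 0}[W(u) + (u - \sigma\phi^{\eps_n}(x))^2/(2\sigma)]$, which realises the lower bound \eqref{eq:JW} as an equality. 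A small mass-preserving perturbation of $\rho^{\eps_n}$ in the bulk region $\{d \gg \eps_n\}$, where $\rho^{\eps_n} \approx \theta$ and $W$ is smooth, will restore $\int\rho^{\eps_n} = 1$ at an energy cost of order $o(1)$. The coarea formula combined with the ODE satisfied by $\psi_\infty$ then gives $\sJ_{\eps_n}(\phi^{\eps_n}) \to \gamma \int_\Omega|\na\phi| = \sJ_0(\phi)$.

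\textbf{Main obstacle.} The core novelty beyond classical Modica--Mortola theory is the mass constraint $\int\rho = 1$ inherent in the inner infimum defining $\sJ_\eps$: this constraint is not respected by the pointwise minimiser used to realise the lower bound \eqref{eq:JW} as an equality. The principal work is to show that the constraint can be restored by a small mass-preserving perturbation of $\rho^{\eps_n}$ with negligible energy cost, exploiting the smoothness and strict convexity of $W$ near its positive well $\theta$. A secondary technical point is the $L^1$-convergence of the composition $\Phi\circ\sigma\phi^{\eps_n}$ required for the lower semicontinuity step: since $\Phi$ is not globally Lipschitz, one must extract equi-integrability from the (at least) quadratic growth of $W_\sigma$ at infinity, which itself follows from the super-quadratic growth of $f$ provided by \ref{hyp:fGrowthCondition}.
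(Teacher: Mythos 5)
Your $\liminf$ argument is essentially the paper's: the same chain of Modica's trick applied to the lower bound \eqref{eq:JW}, lower semicontinuity of the total variation, Fatou's lemma forcing the limit into the wells of $W_\sigma$, and the $(\rho-\sigma\phi)^2$ term together with $\int_\Omega\rho^{\eps_n}\,dx=1$ to transfer the mass constraint to $\phi$. The only difference is cosmetic: the paper truncates the primitive at $\theta$ (its $F_\sigma$ is constant on $[\theta,\infty)$, hence bounded and Lipschitz by \eqref{eq:gmin}), so $F_\sigma(\sigma\phi^{\eps_n})\to\gamma\phi$ in $L^1(\Omega)$ comes for free, whereas your untruncated $\Phi$ forces the equi-integrability digression you flag; the truncation costs nothing since the lower bound only needs $|\na F_\sigma(\sigma\phi^{\eps_n})|\le\sqrt{2W_\sigma(\sigma\phi^{\eps_n})}\,|\na\phi^{\eps_n}|$. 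Your $\limsup$, however, is a genuinely different route. The paper does not build the optimal profile by hand: it invokes the classical Modica--Mortola recovery construction as a one-parameter family $\phi^{\eps_n}_t$ with $\phi^{\eps_n}_0\le\phi\le\phi^{\eps_n}_1$ and, instead of tuning $t$ to fix $\int\phi^{\eps_n}_t\,dx$, tunes it so that $\int_\Omega\bigl[\phi^{\eps_n}_t-W_\sigma'(\sigma\phi^{\eps_n}_t)\bigr]dx=1/\sigma$ (possible since $v\mapsto v-W_\sigma'(\sigma v)$ is nondecreasing, being the derivative of $v\mapsto\frac1\sigma f^*(v-a)$). Then $\rho^{\eps_n}:={f^*}'(\phi^{\eps_n}-a)$ is exactly a probability density and, by \eqref{eq:eqg}, attains the inner infimum pointwise, so $\sJ_{\eps_n}(\phi^{\eps_n})\le\sF_{\eps_n}(\phi^{\eps_n})$ with no correction at all. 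You instead keep an untuned profile, take the same pointwise minimizer (your $\argmin$ is precisely ${f^*}'(\phi^{\eps_n}-a)$, by strict convexity of $f$), and repair its mass by a bulk perturbation. Both mechanisms address the same core difficulty, which you correctly identify; the paper's choice buys exact admissibility of $\rho^{\eps_n}$ and avoids any quantitative error analysis.

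Two points in your route need genuine care. First, the $o(1)$ energy cost of the bulk repair is not automatic: spreading a mass defect $\delta_n$ over a fixed bulk region at amplitude $\sim\delta_n$ costs $\sim\delta_n^2/\eps_n$ after division by $\eps_n$, so you need $\delta_n=o(\sqrt{\eps_n})$. The Cauchy--Schwarz bound you use in the $\liminf$ only gives $\delta_n=O(\sqrt{\eps_n})$, which is exactly borderline and yields an $O(1)$ cost; you must instead exploit the explicit layer structure of your construction ($\rho^{\eps_n}-\sigma\phi^{\eps_n}$ and $\sigma\phi^{\eps_n}-\theta\chi_E$ are bounded and supported in an $O(\eps_n)$-thick neighborhood of $\pa E$), giving $\delta_n=O(\eps_n)$ and a repair cost $O(\eps_n)$. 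Second, your reduction to smooth interfaces must preserve the volume $|E|=1/\theta$ exactly: if the approximating smooth sets have a different volume, the mass defect of ${f^*}'(\phi^{\eps_n}-a)$ is $O(1)$ and the repair costs $O(1/\eps_n)$. So plain $BV$ density is not enough; you need the volume-constrained smooth approximation of sets of finite perimeter (equivalently, the constrained Modica--Mortola recovery result that the paper simply cites). Neither point is fatal, but both must be carried out for the argument to close.
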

Note that $\sJ_0(\phi)<+\infty$ if and only if $\phi= \frac{\theta}{\sigma} \chi_E$ for some set $E\subset\Omega$ with finite perimeter $P(E,\Omega)<+\infty$ and with volume $|E|=\frac{1}{\theta}$.
The existence of such a function requires $\frac{1}{\theta} \leq |\Omega|$; heuristically, the domain needs to be sufficiently large to accommodate a given amount of mass of a given density. When $\frac{1}{\theta} = |\Omega|$, the only possibility is (up to sets of Lebesgue measure zero) $E=\Omega$. In such a case, the limiting volume-preserving mean-curvature flow is stationary, so in order for anything interesting to happen, we will  always assume that
\begin{equation}\label{eq:st}
	\theta |\Omega|>1.
\end{equation}

We stress again the fact that even though $\sJ_\eps$ does not include a mass constraint on $\phi$, its $\Gamma$-limit $\sJ_0$ does. 
As we will see in the proof, this is due to the fact that the mass constraint on $\rho$ is transferred to $\phi$ in the limit $\eps\to 0^+$.
The fact that the solutions of \eqref{eq:phi} satisfy a mass constraint in the limit was already seen in \eqref{eq:L1}, but this 
 property of $\sJ_0$ is  more rigorous evidence that the evolution of the limit ought to be a volume-preserving mean-curvature flow, 
 which is the content of our main theorem:

\begin{theorem}[Phase separation \& conditional convergence of \eqref{eq:phi} to \eqref{eq:V}]\label{thm:main}
	Suppose $f$ satisfies \ref{hyp:fMinimalAssumptions}, \ref{hyp:fGrowthCondition}  and \ref{hyp:fSecondBounds}. Fix any positive sequence $\pseq{\eps_n}_n$ such that $\eps_n \to 0$. Let $\pseq{\phi_{\init}^{\eps_n}}_n \subset H^1(\Omega)$ be such that $\sup_n\sJ_{\eps_n}(\phi_{\init}^{\eps_n})\leq C$ (for some constant $C > 0$), and let  $\phi^{\eps_n}$ be a solution of \eqref{eq:phi} with initial data $\phi_{\init}^{\eps_n}$ (provided by Theorem \ref{thm:existence}).
	\begin{enumerate}[label=(\roman*)]	
		\item (phase separation) There exists a subsequence, still denoted by $\pseq{\eps_n}_n$, such that $\pseq{\phi^{\eps_n}}_n$ converges strongly in $C^{0,s}([0,T]; L^1(\Omega))$ for all $s\in (0,1/2)$ to a function
		\begin{equation}
			\phi^0\in C^{0,s}([0,T]; L^1(\Omega)) \cap  L^\infty\big(0,T;BV(\Omega;\{0,\theta/\sigma\})\big) \cap BV\big((0,T)\times\Omega\big).
		\end{equation}
		Along this subsequence, $\pseq{\rho^{\eps_n}}_n$ converges  to  $\sigma \phi^0$
		strongly in $L^\infty([0,T]; L^q(\Omega))$ for all $q \in [1,\frac{d}{d-1})$.
		\smallskip
		
		\item (conditional convergence) Assume furthermore that the following energy convergence  condition holds:
		\begin{equation}\label{eq:EA}
			\lim_{n\to \infty} \int_0^T \sJ_{\eps_n}(\phi^{\eps_n}(t)) \, dt = \int_0^T \sJ_0(\phi^0(t))\, dt.
		\end{equation}
		Then there exists $V\in L^2((0,T)\times\Omega;\abs{\na \phi^0}dt)$ such that 
		$ \pa_t\phi^0 = V |\na \phi^0|dt$ and  $\phi^0(0,\cdot)=\phi_{\init}$ in the following distributional sense:
		\begin{equation}\label{eqLvelweak}
			\int_0^T \int_\Omega \phi^0 \pa_t \zeta \, dx\, dt+ \int_\Omega \phi_{\init} \zeta(0)\, dx = -\int_0^T\int_\Omega V \zeta |\na \phi^0|dt
		\end{equation}
		for all $\zeta \in C^1_c([0,T)\times\Omega)$. Moreover,  
		there exists $\Lambda\in L^2(0,T)$ (the \emph{Lagrange multiplier for the volume constraint}) such that \eqref{eq:V} holds in the following distributional sense:
		\begin{equation}\label{eq:Vxi} 
			\int_0^T\int_\Omega 
			V \xi \cdot \na \phi^0 \, dt =  \int_0^T\int_\Omega\left( \div \xi - \frac{\na \phi^0}{|\na \phi^0|} \otimes \frac{\na \phi^0}{|\na \phi^0|}: D\xi \right) |\na \phi^0|\, dt + \int_0^T \Lambda(t) \int_\Omega  \xi \cdot \na \phi^0\, dt
		\end{equation}
		for all vector fields $\xi \in [C^1([0,T]\times\overline \Omega)]^d $ satisfying $\xi \cdot n=0$ on $\pa\Omega$.
	\end{enumerate}
\end{theorem}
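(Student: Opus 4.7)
The plan for part (i) is to exploit the energy dissipation inequality \eqref{eq:energy0}. Because $\sJ_\eps$ and $\sE_\eps$ differ only by the constant $a/\eps$ (since $\int_\Omega \rho_\phi\, dx = 1$), this dissipation transfers verbatim to $\sJ_\eps$, producing the uniform bounds $\sup_{t\in[0,T]} \sJ_{\eps_n}(\phi^{\eps_n}(t)) + \int_0^T\!\int_\Omega \eps_n|\pa_t\phi^{\eps_n}|^2\, dx\, dt \leq C$. Via the lower bound \eqref{eq:JW}, the first estimate is a Modica--Mortola control at every time $t$, and applying Modica's trick to the primitive of $v\mapsto \sqrt{2W_\sigma(\sigma v)}$ gives a uniform $L^\infty(0,T;BV(\Omega))$ bound on $\phi^{\eps_n}$ (using the $L^\infty$ estimate of Theorem \ref{thm:existence}). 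Time compactness then follows by combining this spatial BV bound with a dual-norm estimate on $\pa_t\phi^{\eps_n}$ (obtained by testing \eqref{eq:phi} against smooth functions and using \eqref{eq:rhophibd}) through an Aubin--Lions argument, giving strong convergence along a subsequence $\phi^{\eps_n}\to\phi^0$ in $C^0([0,T];L^1(\Omega))$; the $C^{0,s}$ upgrade for $s < 1/2$ comes from interpolating with the $L^2_t$ bound on $\eps_n^{1/2}\pa_t\phi^{\eps_n}$ against the uniform BV regularity. The limit $\phi^0$ takes values in $\{0,\theta/\sigma\}$ a.e.\ because $W_\sigma(\sigma\phi^0)=0$ (forced by the $\Gamma$-$\liminf$ of Theorem \ref{thm:Gamma}), with total mass $1/\sigma$ obtained by passing \eqref{eq:L1} to the limit. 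The convergence $\rho^{\eps_n}\to\sigma\phi^0$ follows from the pointwise-in-time bound $\sup_t\|\rho^{\eps_n}(t)-\sigma\phi^{\eps_n}(t)\|_{L^2(\Omega)}^2 \leq C\eps_n$ (visible in \eqref{eq:J0}), combined with the $L^m$ control \eqref{eq:rhophibd} and interpolation against the spatial compactness of $\phi^{\eps_n}$.

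For part (ii), the energy-convergence hypothesis \eqref{eq:EA} combined with the time-integrated $\Gamma$-$\liminf$ forces each inequality in the Modica--Mortola lower bound for $\sJ_\eps$ to become an equality in the limit, yielding the equipartition identity $\eps_n |\na\phi^{\eps_n}|^2 \simeq 2\eps_n^{-1}W_\sigma(\sigma\phi^{\eps_n})$ as measures on $(0,T)\times\Omega$, together with the vanishing of the normalized defect $\eps_n^{-1/2}(\rho^{\eps_n}-\sigma\phi^{\eps_n})$ in an averaged sense. To obtain the normal-velocity identity \eqref{eqLvelweak}, I would test \eqref{eq:phi} against $\zeta \in C^1_c([0,T)\times\Omega)$ in time-integrated weak form, pass to the limit in $\int\phi^{\eps_n}\pa_t\zeta\, dx\, dt$ using the $C^{0,s}$-in-$L^1$ convergence, and use the equipartition and BV structure of $\phi^0$ to identify a normal velocity $V\in L^2((0,T)\times\Omega;|\na\phi^0|\, dt)$ such that the limit of $\int \zeta \pa_t\phi^{\eps_n}\, dx\, dt$ is $\int V\zeta |\na\phi^0|\, dt$; this is precisely $\pa_t\phi^0 = V|\na\phi^0|\, dt$ in the distributional sense.

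The final and most delicate step is the derivation of \eqref{eq:Vxi}. I would follow the stress-tensor / inner-variation approach: multiplying \eqref{eq:phi} by $\xi\cdot\na\phi^{\eps_n}$ with $\xi\cdot n=0$ on $\pa\Omega$ and integrating by parts yields a distributional identity relating $\int_0^T\!\int_\Omega \pa_t\phi^{\eps_n}(\xi\cdot\na\phi^{\eps_n})\, dx\, dt$ to the inner-variation of the Modica--Mortola tensor built from $\eps_n|\na\phi^{\eps_n}|^2/2$ and $\eps_n^{-1}W_\sigma(\sigma\phi^{\eps_n})$, plus the nonlocal contribution produced by the chemotaxis term $\eps_n^{-1}(\rho_{\phi^{\eps_n}} - \sigma\phi^{\eps_n})$. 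Using the equipartition guaranteed by \eqref{eq:EA}, the Modica--Mortola tensor converges to the BV stress tensor $(\mathrm{div}\,\xi)|\na\phi^0| - \frac{\na\phi^0}{|\na\phi^0|}\otimes\frac{\na\phi^0}{|\na\phi^0|}:D\xi$ appearing on the right-hand side of \eqref{eq:Vxi}, while the nonlocal contribution, after isolating the constant-in-$x$ Lagrange multiplier $\ell(\phi^{\eps_n}(t))$ of Proposition \ref{prop:rho} (suitably renormalized in $\eps_n$), is expected to converge in $L^2(0,T)$ to $\Lambda(t)$, thereby enforcing $|E(t)|=1/\sigma$ in the limit. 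The central obstacle is the classical Luckhaus--Sturzenhecker phenomenon: the quadratic terms of the stress tensor could produce defect measures in the limit unless the energy itself is conserved, and the hypothesis \eqref{eq:EA} is exactly what rules this out. Combining this obstruction with the highly nonlocal dependence $\phi\mapsto \rho_\phi$ (which couples the entire state of the system into the Lagrange multiplier) is where the main technical novelty of the argument lies, and it is why \eqref{eq:EA} is needed as a separate, conditional hypothesis.
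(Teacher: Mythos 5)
Your overall architecture matches the paper's (energy dissipation plus the Modica--Mortola lower bound and Aubin--Lions for part (i); equipartition, the inner-variation/stress-tensor identity, and a renormalized Lagrange multiplier for part (ii)), but two steps as you state them do not go through. In part (i), the uniform $L^\infty(0,T;BV(\Omega))$ bound on $\phi^{\eps_n}$ itself is not available: Modica's trick only controls $\int_\Omega \sqrt{2W_\sigma(\sigma\phi^{\eps_n})}\,|\na\phi^{\eps_n}|\,dx$, i.e.\ the total variation of the composed function $\psi^{\eps_n}=F_\sigma(\sigma\phi^{\eps_n})$ (see \eqref{eq:BVbound}); since $F_\sigma'$ vanishes at the two wells, one cannot invert back to a $BV$ bound on $\phi^{\eps_n}$ (small oscillations around the well values have negligible energy but unbounded variation), and the $L^\infty$ estimate of Theorem \ref{thm:existence} is not even assumed here (it needs extra hypotheses on $f$ and $\phi^{\eps}_\init$ and is uniform in $\eps$ only when $C_2\le\sigma$). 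Likewise, testing \eqref{eq:phi} against smooth functions gives no uniform-in-$\eps$ dual-norm bound on $\pa_t\phi^{\eps_n}$: the right-hand side carries $\eps^{-2}$ and the dissipation only bounds $\eps^{1/2}\pa_t\phi^{\eps_n}$ in $L^2$. The paper's device is to run the whole compactness argument on $\psi^{\eps_n}$ (whose time derivative $\sqrt{2W_\sigma(\sigma\phi^{\eps_n})}\,\pa_t\phi^{\eps_n}$ is bounded in $L^2(0,T;L^1(\Omega))$ by pairing the well term with the dissipation), and then to transfer the convergence to $\phi^{\eps_n}$ through the quantitative estimate $\|F_\sigma(\sigma\phi^{\eps_n})-\gamma\phi^{\eps_n}\|_{C([0,T];L^2(\Omega))}\to 0$, which exploits that $F_\sigma(v)-\tfrac{\gamma}{\sigma}v$ vanishes exactly at the wells and is dominated by $W_\sigma$ away from them. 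Without this (or an equivalent substitute) your compactness step fails.

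In part (ii), the genuinely delicate point is not acknowledged as a proof but only as an expectation: you must show that the renormalized multiplier $\Lambda^{\eps_n}(t)=\tfrac{\sigma}{\gamma}\,\tfrac{a-\ell^{\eps_n}(t)}{\eps_n}$ is bounded in $L^2(0,T)$ before you can extract a weak limit $\Lambda$. In the paper this is the content of Proposition \ref{prop:LM}: after rewriting the nonlocal term via \eqref{eq:supportOfRho} and the Euler--Lagrange equation of Proposition \ref{prop:rho} so that it equals $-\Lambda^{\eps_n}(t)\tfrac{\gamma}{\sigma}\int_\Omega\rho_{\phi^{\eps_n}(t)}\,\fdiv\,\xi\,dx$ up to an error controlled by the energy defect $z^{\eps_n}(t)$, one needs a lower bound $\int_\Omega\rho_{\phi^{\eps_n}(t)}\,\fdiv\,\xi\,dx\ge\eta/8$ that is uniform in $t$ and $n$. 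This requires constructing test fields $\xi_{t_0}=\na u$ by solving auxiliary Neumann problems with datum approximating $\sigma\phi^0(t_0)-|\Omega|^{-1}$, uses crucially the assumption $\theta|\Omega|>1$ from \eqref{eq:st}, and is globalized in time via the H\"older continuity of $\phi^0$ and a finite covering of $[0,T]$; only then do the dissipation and defect bounds yield $\|\Lambda^{\eps_n}\|_{L^2(0,T)}\le C$. Your proposal skips this mechanism entirely, so the identification of the Lagrange multiplier --- the main new ingredient of the theorem --- remains unproved as written. The remaining limits (the first four terms of the inner-variation identity, via equipartition and a Reshetnyak-type argument with frozen directions $h$) are in line with the paper.
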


Since (i) implies that $\phi^0(t,x)=\frac \theta \sigma \chi_{E(t)}(x)$ for some set $E(t) \subset \Omega$ with finite perimeter in $\Omega$, \eqref{eqLvelweak} encodes the fact that $\pa E(t)$ moves with a velocity whose normal component is $V$
and \eqref{eq:Vxi} encodes the velocity law $ V = -  \kappa + \Lambda$: We recognize the first term on the right-hand side of \eqref{eq:Vxi} as the distributional mean-curvature of the reduced boundary $\partial^\ast E(t)$ (see \cite[Section 17.3]{2012_Maggi_setsOfFinitePerimeter} or \cite[Section 7.3]{AFP_BV_00}). The object $\nabla \phi^0(t)/\abs{\nabla \phi^0(t)}$ denotes the Radon-Nikodym derivative of the Radon measure $\nabla \phi^0(t)$ w.r.t.\ its total variation $\abs{\nabla \phi^0(t)}$; we recall that it coincides with the measure-theoretic inner unit normal vector to $E(t)$, the support of $\phi^0(t)$.

We stress the fact that part (ii) of the theorem (the derivation of the volume-preserving mean-curvature flow \eqref{eq:V}) is established under the \emph{additional} assumption of convergence of the energy \eqref{eq:EA}. 
The $\liminf$ inequality of the $\Gamma$-convergence result (Theorem \ref{thm:Gamma}-(i))
gives 
\[
\liminf_{n\to \infty} \int_0^T \sJ_{\eps_n}(\phi^{\eps_n}(t)) \, dt \geq \int_0^T \sJ_0(\phi^0(t))\, dt,
\]
so this additional assumption \eqref{eq:EA} states that a sequence of solutions $\pseq{\phi^{\varepsilon_n}}_n$ to \eqref{eq:phi} loses no energy in the limit $\varepsilon_n \to 0$. Such energy losses are typically caused by folding or disappearing boundaries.
This type of assumption has been used to obtain convergence results in several related frameworks (see for example Luckhaus and Sturzenhecker \cite{1995_Luckhaus&Sturzenhecker_implicitTimeDiscretization-mcf}, Laux and Otto \cite{LO16}, Laux and Simon \cite{Laux3}).

Let us also point out that by making the assumption that the initial data is well-prepared, i.e., $\sup_{n\in\bN} \sJ_{\varepsilon_n}(\phi_\init^{\varepsilon_n}) < +\infty$, we are prohibited from fixing some $\phi_\init$ and choosing $\phi_\init^{\varepsilon_n} = \phi_\init$ independent of $\eps_n$. A typical choice of well-prepared initial data is the  recovery sequence of a given $\phi_\init$ for the $\limsup$ inequality of the $\Gamma$-convergence result (Theorem \ref{thm:Gamma}-(ii)).

\subsection{Relationship to prior work}
We end this section with a brief review of previous work on related problems:
First, we recall that the existence of time-global classical solutions to the volume-preserving mean-curvature flow
\eqref{eq:V} was proven by Gage \cite{1986_Gage_areaPreservingEvolution-planeCurves} and Huisken \cite{1987_Huisken_timeglobalClassical-vpmcf} for convex initial data and 
by Escher and Simonett \cite{1998_Escher&Simonett_vpmcf-near-spheres} for 
nonconvex initial data that are sufficiently close to a sphere.
For more general initial data, classical solutions of \eqref{eq:V} can develop self-intersections
and experience curvature blow-up in finite time (see for example \cite{2000_Mayer&Simonett_selfIntersections-vpmcf,2001_Mayer_singularExample_vpmcf,2005_Escher&Ito_dynamicProperties-vpmcf}).
Various notions of weak solutions have been used to get some global-in-time existence result, such as 
 viscosity solutions (see for instance 
 Kim and Kwon \cite{2020_Kim&Kwon_vpmcf-viscositySoln} and Golovaty
 \cite{1997_Golovaty_vpmcf-ReactionDiffusion-viscosity}) 
 and  integral varifold solutions  by Takasao \cite{2017_Takasao_massConvservingAllenCahn2d3d-to-vpmcf-varifolds, 2023_Takasao_massConservingAllenCahn-to-vpmcf-varifolds}.
   In \cite{2016_Mugnai-etal_globalSolutions-vpmcf}, $BV$ solutions to \eqref{eq:V}  are constructed by a minimizing movements scheme, in the spirit of \cite{1995_Luckhaus&Sturzenhecker_implicitTimeDiscretization-mcf}, 
 under an energy convergence assumption analogous to \eqref{eq:EA}. 
A similar result is obtained by Laux and Swartz in \cite{2017_Laux&Swartz_thresholdingSchemes-IncorporatingBulkEffects}  using a thresholding scheme  -- still under an energy convergence assumption.
Weak-strong uniqueness principles for $BV$ solutions of \eqref{eq:V} have also been established by Laux in \cite{2024_Laux_weakStrongUniqueness-vpmcf}.

There is a long history of works related to the sharp interface limit of Allen-Cahn-type equations to mean-curvature flow
including volume-preserving flows.
The mass-conserving  Allen-Cahn equation proposed  by Rubinstein and Sternberg  \cite{Rubinstein-Sternberg92} reads 
\begin{equation}\label{eq:ACc}
\pa_t \phi - \Delta \phi = -\frac 1{\eps^2} W'(\phi)+ \frac{1}{\varepsilon}\lambda(t)
\end{equation}
with the uniform-in-space Lagrange multiplier $\lambda$ enforcing the mass-conservation constraint $\int \phi(t) \,dx = \int \phi(0) \,dx$.
But other models have been proposed with a Lagrange multiplier that is concentrated near the interfacial layer, such as  (see \cite{1997_Golovaty_vpmcf-ReactionDiffusion-viscosity,bretin-brassel-MMAS11}):
\begin{equation}\label{eq:ACn}
\pa_t \phi - \Delta \phi = -\frac 1{\eps^2}W'(\phi) + \frac{1}{\varepsilon}\lambda(t) \sqrt{2W(\phi)}.
\end{equation}

Convergence result for \eqref{eq:ACc} have been established in particular  by Bronsard and Stoth in 
 \cite{1997_Bronsard&Stoth_vpmcf-GinzburgLandau}
for radially symmetric solutions.
Without symmetry  assumptions, Chen, Hilhorst and  Logak \cite{2011_Chen-etal_massConserving-AllenCahn-vpmcf} proved the convergence of solutions of \eqref{eq:ACc}  to classical solutions of \eqref{eq:V} provided such  solutions exist (such results are thus only valid up to the formation of the first singularity).
A similar result is proved by  Alfaro and   Alifrangis \cite{2014_Alfaro&Alifrangis_massConvservingAllenCahn-local&NonlocalLM} for the equation \eqref{eq:ACn}
and more recently, Kroemer and  Laux \cite{2023_Kroemer&Laux_AC-to-VPMCF-relativeEntropy} used a relative entropy method to get a quantitative rate of convergence (still under the assumption that  the limiting problem \eqref{eq:V} has a classical solution).

Concerning the convergence to weak solutions, we note that Takasao \cite{2017_Takasao_massConvservingAllenCahn2d3d-to-vpmcf-varifolds, 2023_Takasao_massConservingAllenCahn-to-vpmcf-varifolds} proved  the convergence toward integral varifold solutions for \eqref{eq:ACn}.
Laux and Simon, \cite{Laux3},  proved the convergence toward $BV$ solutions (analogous to ours)  for a multiphase mass-conserving Allen-Cahn equation under an energy convergence assumption similar to \eqref{eq:EA}. 
This last result is probably the most similar to ours. We note that our equation \eqref{eq:phi} (see also  \eqref{eq:ACe})  does share some similarities with \eqref{eq:ACn} in the sense that the mass constraint is not enforced uniformly across $\Omega$.  However, the right-hand side of \eqref{eq:phi} cannot be separated into a reaction term that drives the separation of phase and an ad-hoc Lagrange multiplier; the two phenomena are taken into account in the definition of $\rho_\phi$. 
We also recall that \eqref{eq:phi} does not actually preserve the mass of $\phi$ as explained in the introduction since the mass constraint is on $\rho$. As far as we know, our work is the first result on such an equation.


\section{The \texorpdfstring{$\rho$}{\textrho} equation (Proof of Proposition \ref{prop:rho})}\label{sec:rho}

We start by proving the existence of a minimizer for  the functional $\rho\mapsto \mathscr{K}(\rho;\phi)$ 
where we recall that 
$$\mathscr{K}(\rho ; \phi)  :=\int_\Omega f(\rho) - \rho \phi\, dx $$
for a given function  $\phi\in L^2(\Omega)$ (see \eqref{eq:minrhom}).

\begin{proof}[Proof of Proposition \ref{prop:rho} (i)]
We note that $\mathscr{K}(\cdot ; \phi) :L^m(\Omega) \cap \cP_\ac(\Omega) \to (-\infty,+\infty]$ and that $\mathscr{K}(\cdot ; \phi)$ is not identically equal to $+\infty$. Indeed, taking $\mu(x) = \frac{1}{|\Omega|}$, we get
\begin{equation}\label{eq:infK}
\inf_{\rho \in \cP_\ac(\Omega) \cap L^m(\Omega)} \mathscr{K}(\rho;\phi) \leq |\Omega| f(|\Omega|^{-1}) + \frac{1}{|\Omega|} \int_\Omega \phi\, dx
\leq  C+ \frac{1}{|\Omega|^{1/2}} \| \phi\|_{L^2(\Omega)}.
\end{equation} 
Furthermore, assumption \ref{hyp:fMinimalAssumptions} implies that $f(u)\geq 0$ for all $u\geq 0$ and together with \ref{hyp:fGrowthCondition} it implies 
$ f(u) \geq C_1 u^m-C_1 R_1^m$ for all $u\geq 0$.
We thus have (recall that $m\geq 2$ and so $\frac{m}{m-1}\leq 2$) for $\rho \in L^m(\Omega) \cap \cP_\ac(\Omega)$:
\begin{align}
\mathscr{K}(\rho ; \phi)  = \int_\Omega f(\rho) - \rho \phi\, dx 
&  \geq C_1 \int_\Omega \rho^m\, dx - \int_\Omega  \rho \phi\, dx - C \nonumber \\
& \geq   \frac{C_1}{2} \int_\Omega \rho^m\, dx - C  \int_\Omega \phi^{\frac{m-1}{m}}\, dx - C\nonumber\\
& \geq   \frac{C_1}{2} \int_\Omega \rho^m\, dx -  C \int_\Omega \phi^{2}\, dx - C\label{eq:coerc}
\end{align}
for some constant $C>0$ independent of $\rho$. Thus, $\sK(\cdot;\phi)$ is coercive on $L^m(\Omega)$. 

This implies that $\inf_{\rho \in \cP_\ac(\Omega) \cap L^m(\Omega)} \mathscr{K}(\rho;\phi)	>-\infty$ and that any
 minimizing sequence $\pseq{\rho_n}_n \subset L^m(\Omega) \cap \cP_\ac(\Omega)$ must be bounded in $L^m(\Omega)$.
Up to a subsequence, $\pseq{\rho_n}_n$ converges weakly in $L^m(\Omega)$ to $\rho\in L^m(\Omega) \cap \cP_\ac(\Omega)$  (since $\Omega$ is bounded, we have $\int_\Omega \rho(x)\, dx =1$).
The convexity and superlinearity of $f$  (Assumptions \ref{hyp:fMinimalAssumptions} and \ref{hyp:fGrowthCondition}) then imply the lower semicontinuity of $\sK$:
$$
\liminf_{n\to \infty} \int_\Omega f(\rho_n) - \rho_n \phi\, dx \geq \int_\Omega f(\rho) - \rho \phi\, dx 
$$
and so $\rho$ is a minimizer of $\mathscr{K}(\cdot;\phi)$.
Since $L^m(\Omega) \cap \cP_\ac(\Omega)$ is convex, the strict convexity of $f$, which gives the strict convexity of $ \mathscr{K}(\cdot;\phi)$, implies that this minimizer is unique.
It is easy to check that \eqref{eq:coerc} and \eqref{eq:infK} implies \eqref{eq:rhophibd}.
 \end{proof}

From now on, we denote by $\rho_\phi \in \cP_\ac(\Omega) \cap L^m(\Omega)$ the unique solution of \eqref{eq:minrhom}.
In order to show that the velocity potential $f'(\rho_\phi) - \phi$ is constant on the region where the density $\rho_\phi$ is positive, we first give the following classical result:
\begin{lemma}[optimality condition of \eqref{eq:minrhom}]\label{lemma:minrhom-optimalityCondition}
	Let $\rho_\phi \in \cP_\ac(\Omega) \cap L^m(\Omega)$ be the unique solution of \eqref{eq:minrhom}. Define $F(x) :=f'(\rho_\phi(x)) - \phi(x)$. Then 
	\begin{equation} \label{eq:rhoOptimalityCondition}
		0 \le \int_\Omega F(x) (\rho(x) - \rho_\phi(x))  \,dx	\qquad	\forall \, \rho\in\cP_\ac(\Omega) \cap L^m(\Omega).
	\end{equation}
\end{lemma}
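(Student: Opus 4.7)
I would prove this standard first-order optimality condition by probing the minimizer with a straight-line perturbation. Fix an arbitrary competitor $\rho \in \cP_\ac(\Omega) \cap L^m(\Omega)$ and, for $t \in (0,1]$, set $\rho_t := (1-t)\rho_\phi + t\rho$. Since $\cP_\ac(\Omega) \cap L^m(\Omega)$ is convex, $\rho_t$ is admissible, and by the minimality of $\rho_\phi$ one has $\sK(\rho_t;\phi) \geq \sK(\rho_\phi;\phi)$, which after rearrangement reads
$$
\int_\Omega \frac{f(\rho_t) - f(\rho_\phi)}{t}\,dx \;\geq\; \int_\Omega \phi\,(\rho - \rho_\phi)\,dx.
$$
The right-hand side is finite because $\phi \in L^2(\Omega)$ and $\rho,\rho_\phi \in L^m(\Omega) \subset L^2(\Omega)$ on the bounded domain $\Omega$, so the remaining task is to pass to the limit $t \to 0^+$ on the left.

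For each $x \in \Omega$, the map $t \mapsto f(\rho_t(x))$ is convex (composition of the convex $f$ with an affine function of $t$), so the difference quotient
$$
q_t(x) := \frac{f(\rho_t(x)) - f(\rho_\phi(x))}{t}
$$
is non-decreasing in $t$ and therefore decreases monotonically as $t \searrow 0$ to the right-derivative at $0$, which equals $f'(\rho_\phi(x))(\rho(x) - \rho_\phi(x))$. When $\rho_\phi(x) > 0$ this identification is immediate from $f \in C^1$; when $\rho_\phi(x) = 0$ one has $\rho_t(x) = t\rho(x)$ and, using $f(0) = f'(0) = 0$ together with $f \in C^1([0,\infty))$, $q_t(x) = f(t\rho(x))/t \to f'(0)\rho(x) = 0$, matching $f'(\rho_\phi(x))(\rho(x) - \rho_\phi(x))$. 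Monotonicity also provides the uniform bound $q_t(x) \leq q_1(x) = f(\rho(x)) - f(\rho_\phi(x))$, which lies in $L^1(\Omega)$ since $\sK(\rho;\phi)$ and $\sK(\rho_\phi;\phi)$ are both finite.

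Applying monotone convergence to the non-negative, non-decreasing family $q_1 - q_t$ as $t \searrow 0$ then yields
$$
\lim_{t \to 0^+} \int_\Omega q_t(x)\,dx \;=\; \int_\Omega f'(\rho_\phi(x))(\rho(x) - \rho_\phi(x))\,dx,
$$
and sending $t \to 0^+$ in the inequality from the first paragraph delivers exactly \eqref{eq:rhoOptimalityCondition}. The only genuine subtlety I anticipate is that the limit integrand $f'(\rho_\phi)(\rho - \rho_\phi)$ need not be separately absolutely integrable (nothing prevents $f'(\rho_\phi)\rho$ from failing to lie in $L^1$ where $\rho_\phi$ is large and $\rho$ is much larger), so one cannot simply dominate $q_t$ by an $L^1$ function and apply dominated convergence. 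Framing the limit via monotone convergence with the one-sided $L^1$ bound $q_t \leq f(\rho) - f(\rho_\phi)$ is precisely what couples the positive and negative parts and legitimizes the passage to the limit; finiteness of the resulting integral is then recovered \emph{a posteriori} from the lower bound $\int_\Omega \phi(\rho - \rho_\phi)\,dx$.
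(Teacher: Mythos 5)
Your proof is correct and follows essentially the same route as the paper: perturb the minimizer along the convex combination $\rho_s=(1-s)\rho_\phi+s\rho$, use minimality to get the difference-quotient inequality, and pass to the limit $s\to0^+$. You additionally spell out the monotone-convergence justification (with the one-sided bound $q_t\le f(\rho)-f(\rho_\phi)$, implicitly requiring $\int_\Omega f(\rho)\,dx<\infty$ as the paper also does), which the paper leaves as "the result follows by passing to the limit."
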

\begin{proof}
For any $\rho \in \cP_\ac(\Omega) \cap L^m(\Omega)$ for which $\int_\Omega f(\rho)\,dx < +\infty$, and for $s \geq 0$ consider the convex combination $\rho_s :=(1-s)\rho_\phi + s \rho \in \cP_\ac(\Omega) \cap L^m(\Omega)$. We have
$ \sK(\rho;\phi) \leq \sK(\rho_s;\phi)$ for $s\in [0,1]$, that is 
	\begin{align*}
		0 \le \frac{ \sK(\rho_s;\phi) -  \sK(\rho;\phi)}{s} &= \int_\Omega \frac{f(\rho_s) - f(\rho_\phi)}{s} - (\rho - \rho_\phi)\phi \,dx \qquad \forall s\in (0,1).
	\end{align*}
The result follows by passing to the limit $s\to0^+$.	
\end{proof}

\begin{proof}[Proof of Proposition \ref{prop:rho} (ii)]
With the notation of Lemma \ref{lemma:minrhom-optimalityCondition}, 
if $F$ is not constant on $\set{\rho_\phi > 0}$, then there exists $\ell \in \bR$
such that the sets
	$$
	A :=\set{\rho_\phi > 0} \cap \set{F \ge -\ell},	\qquad B :=\set{\rho_\phi > 0} \cap \set{ F < -\ell },
	$$
	have both  positive measure.
	We move the mass that $\rho_\phi$ puts on $A$ onto the set $B$ where $F$ is smaller, in such a way that contradicts the optimality condition \eqref{eq:rhoOptimalityCondition}. Consider
	\[
	\ul \rho(x) :=
	\begin{cases}
		\rho_\phi(x) + \frac{1}{\abs{B}} \int_A \rho_\phi(y) \,dy 	&	\text{if } x \in B,	\\
		0														&	\text{if } x \in \cl\Omega \setminus B.
	\end{cases}
	\]
	The second term in the first condition is well-defined since $\abs{B} > 0$ and it is also strictly positive since $\rho_\phi > 0$ on $A$ and  $\abs{A}>0$. 
	The first term is also positive since $\rho_\phi > 0$ on $B$. It is clear $\ul \rho \in \cP_\ac(\Omega) \cap L^m(\Omega)$. The optimality condition provides a contradiction since
	\[
	0 \le \int_\Omega F(\ul\rho - \rho_\phi) \,dx = \int_B F \,dx  \Big(\frac{1}{\abs{B}} \int_A \rho_\phi \,dy \Big) - \int_A F \rho_\phi \,dx < \int_A (-\ell) \rho_\phi \,dy - \int_A F \rho_\phi \,dx \le 0.
	\]
We conclude that $F$ must be constant on the $\set{\rho_\phi > 0}$ and from now on we  denote by $-\ell$ this constant, that is
(see \eqref{eq:supportOfRho})
$$
			f'(\rho_\phi) - \phi = -\ell	\quad \rho_\phi\text{-a.e.}$$
\end{proof}

We now verify that this unique minimizer $\rho_\phi$ is actually a weak solution of the elliptic equation \eqref{eq:rhom}. This is a consequence of Proposition \ref{prop:rho}-(ii).
\begin{proof}[Proof of Proposition \ref{prop:rho} (iii)]
First, we claim that 
	\begin{equation}\label{eq:pressureEquality}
		f'(\rho_\phi) 
		= (\phi - \ell)_+.
	\end{equation}
Indeed, \eqref{eq:supportOfRho} implies that $f'(\rho_\phi) = \phi-\ell =  (\phi - \ell)_+$ in $\{\rho_\phi>0\}$ (the last equality follows from the fact that $f'(\rho_\phi)\geq 0$ due to \ref{hyp:fMinimalAssumptions}). Since $f'(0)=0$, we only need to check that $(\phi - \ell)_+=0$ a.e.\ in  $\{\rho_\phi=0\}$ in order to get \eqref{eq:pressureEquality}. 
Assume, by contradiction, that $A=\{ \phi>\ell\}\cap  \{\rho_\phi=0\}$ has positive Lebesgue measure.
We have $F(x) = f'(\rho_{\phi}(x)) - \phi(x) < -\ell$ in $A$ and so 
using the optimality condition \eqref{eq:rhoOptimalityCondition} with  $\rho :=\frac{1}{\abs{A}} \chi_A \in \cP_\ac(\Omega) \cap L^m(\Omega)$, we get:
$$
0\leq \int_\Omega F(x) (\rho(x)-\rho_\phi(x))\, dx < -\ell - \int_\Omega F(x) \rho_\phi(x)\, dx,
$$
which is a contraction since $ \int_\Omega F(x) \rho_\phi(x)\, dx = \int_\Omega (-\ell) \rho_\phi(x)\, dx  = -\ell$.
We thus have \eqref{eq:pressureEquality}.

Since $\phi\in H^1(\Omega)$, this equality \eqref{eq:pressureEquality} implies in particular that $f'(\rho_\phi)$ is in $H^1(\Omega)$ and that
$\na f'(\rho_\phi) = \chi_{\set{\phi > \ell}}\na \phi$. We deduce
$$ \rho_\phi \na f'(\rho_\phi) =  \rho_\phi \chi_{\set{\phi > \ell}} \na \phi   =  \rho_\phi   \na \phi $$
since $\rho_\phi>0$ implies $f'(\rho_\phi)>0$ which gives (by \eqref{eq:pressureEquality}) $\phi>\ell$.
We thus have
	\[
	\rho_\phi \nabla(f'(\rho_\phi) - \phi) = 0 \quad	\text{ a.e.\ in } \Omega,
	\]
which implies \eqref{eq:rhoWeakSolution}.
\end{proof}

Note that $f' \colon \bR_+ \to \bR_+$ is invertible (since $f$ is strictly convex \ref{hyp:fMinimalAssumptions}) and so 
 \eqref{eq:pressureEquality} gives 
	\begin{equation}\label{eq:densityFormula}
		\rho_\phi = (f')^{-1}((\phi - \ell)_+),
	\end{equation}
	where the Lagrange multiplier $\ell \in \bR$ is selected to enforce the mass constraint $\int_\Omega \rho_\phi \,dx = 1$.  Moreover, when $f$ satisfies  \ref{hyp:fStronglyConvex}, $(f')^{-1}$ is Lipschitz (with Lipschitz constant $\alpha$) so we can expect 
	the mapping $\phi \mapsto \rho_\phi$  to be Lipschitz  (a fact that we will need to prove the 
	uniqueness of the solution to \eqref{eq:phi}).
	However such a property does not follow immediately from \eqref{eq:densityFormula} since $\ell$ depends on $\phi$ as well. Nevertheless we can show:
\begin{proposition}[The map $\phi \mapsto \rho_\phi$]\label{prop:rhoStability}
 		
		Suppose that $f$ satisfies \ref{hyp:fStronglyConvex}, 
		(i.e. $f$ is $\alpha$-convex). 
		Then the mapping $\phi \mapsto \rho_\phi$ is Lipschitz $L^2(\Omega) \to L^2(\Omega)$. 
		More precisely, we have
		\begin{equation}\label{eq:liprho}
			\norm{\rho_{\phi_2} - \rho_{\phi_1}}_{L^2(\Omega)} \le \frac{2}{\alpha} \norm{\phi_2 - \phi_1}_{L^2(\Omega)}.
		\end{equation}
\end{proposition}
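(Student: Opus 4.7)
\textbf{Proof plan for Proposition \ref{prop:rhoStability}.}

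The plan is to avoid working directly with the formula \eqref{eq:densityFormula}, because the Lagrange multiplier $\ell = \ell(\phi)$ depends nonlocally on $\phi$ through the mass constraint, and a sharp comparison of $\ell_1=\ell(\phi_1)$ and $\ell_2=\ell(\phi_2)$ is nontrivial. Instead, I will use the first-order optimality condition from Lemma \ref{lemma:minrhom-optimalityCondition} as a duality-type tool that completely bypasses the Lagrange multipliers.

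Fix $\phi_1,\phi_2 \in L^2(\Omega)$ and set $\rho_i :=\rho_{\phi_i}$ for $i=1,2$. Both $\rho_1$ and $\rho_2$ belong to $\cP_\ac(\Omega)\cap L^m(\Omega)$, so each is an admissible test function in the other's optimality condition. Applying Lemma \ref{lemma:minrhom-optimalityCondition} with $\phi=\phi_1$ and test $\rho=\rho_2$, and then with $\phi=\phi_2$ and test $\rho=\rho_1$, and adding the two inequalities, the boundary terms in $\phi$ rearrange into an inner product with $\rho_1-\rho_2$, yielding
\[
\int_\Omega \bigl(f'(\rho_1)-f'(\rho_2)\bigr)(\rho_1-\rho_2)\,dx \;\le\; \int_\Omega (\phi_1-\phi_2)(\rho_1-\rho_2)\,dx.
\]

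By \ref{hyp:fStronglyConvex}, $f$ is $\alpha$-strongly convex on $(0,\infty)$, so $(f'(u)-f'(v))(u-v)\ge \alpha(u-v)^2$ pointwise for $u,v>0$. Combined with Cauchy-Schwarz on the right-hand side this gives $\alpha\|\rho_1-\rho_2\|_{L^2}^2 \le \|\phi_1-\phi_2\|_{L^2}\|\rho_1-\rho_2\|_{L^2}$, and the Lipschitz estimate follows (in fact with the sharper constant $1/\alpha$, which certainly implies the stated $2/\alpha$).

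The only subtlety I anticipate concerns the extension of the $\alpha$-convexity inequality to the boundary of the domain of $f$. Since \ref{hyp:fMinimalAssumptions} only guarantees $f\in C^2((0,\infty))$, the pointwise bound $(f'(u)-f'(v))(u-v)\ge \alpha(u-v)^2$ must be checked at $u=0$ (or $v=0$), where $\rho_1$ or $\rho_2$ could vanish on a set of positive measure. This follows from the continuity of $f'$ at $0$ together with $f'(0)=0$: for $v>0$ one has $f'(v)=\lim_{\eps\to 0^+}\bigl(f'(\eps)+\int_\eps^v f''(s)\,ds\bigr)\ge \alpha v$, and hence $(f'(v)-f'(0))(v-0)\ge \alpha v^2$. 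With this pointwise inequality in hand on all of $[0,\infty)^2$, the integration argument above goes through without modification.
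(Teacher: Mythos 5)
Your argument is correct, and it takes a genuinely (if mildly) different route from the paper's. The paper proves \eqref{eq:liprho} by an energy comparison: it uses the $\alpha$-convexity of $f$ to bound the Bregman gap $\sK(\rho_{\phi_2},\phi_1)-\sK(\rho_{\phi_1},\phi_1)\ge \frac{\alpha}{2}\norm{\rho_{\phi_2}-\rho_{\phi_1}}_{L^2}^2$ (via the optimality condition of Lemma \ref{lemma:minrhom-optimalityCondition} applied once, at $\rho_{\phi_1}$), and then exploits the minimality of $\rho_{\phi_2}$ for $\sK(\cdot,\phi_2)$ to transfer the comparison from $\phi_1$ to $\phi_2$; this yields the constant $2/\alpha$. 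You instead apply the optimality condition symmetrically at both minimizers, add the two variational inequalities, and use the pointwise strong monotonicity $(f'(u)-f'(v))(u-v)\ge\alpha(u-v)^2$ of $f'$ rather than the quadratic lower bound on the Bregman divergence of $f$. This is the classical ``monotonicity of the Euler--Lagrange conditions'' argument for strongly convex minimization, and it buys the sharper constant $1/\alpha$ (which of course implies the stated $2/\alpha$). Your handling of the endpoint $u=0$ is the right way to extend the monotonicity inequality to $[0,\infty)^2$, given that \ref{hyp:fMinimalAssumptions} only gives $f\in C^2((0,\infty))$ with $f'(0)=0$; note also that your test functions in Lemma \ref{lemma:minrhom-optimalityCondition} are themselves minimizers, so $\int_\Omega f(\rho_i)\,dx<+\infty$ and all integrands are in $L^1$ (since $f'(\rho_{\phi_i})=(\phi_i-\ell_i)_+\in L^2(\Omega)$ and $\rho_{\phi_i}\in L^m(\Omega)\subset L^2(\Omega)$), so the summation step is fully justified.
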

\begin{proof} 
	Since $f$ is $\alpha$-uniformly convex, we have
	\[
	f(u_2) - f(u_1) - f'(u_1)(u_2 - u_1) \ge \frac{\alpha}{2} \abs{u_2 - u_1}^2	\qquad	\forall \, u_2,u_1 \in [0,\infty).
	\]
	Combining the definition of $\sK$ (defined with $\phi = \phi_1$) with the above inequality gives
	\begin{align*}
		\sK(\rho_{\phi_2},\phi_1) - \sK(\rho_{\phi_1}, \phi_1) &:=\int_\Omega f(\rho_{\phi_2}) - f(\rho_{\phi_1}) \,dx - \int_\Omega \rho_{\phi_2} \phi_1 - \rho_{\phi_1} \phi_1 \,dx \\
		&\ge \int_\Omega (f'(\rho_{\phi_1}) - \phi_1)(\rho_{\phi_2} - \rho_{\phi_1}) \,dx + \frac{\alpha}{2}\int_\Omega \abs{\rho_{\phi_2} - \rho_{\phi_1}}^2 \,dx \\
		&\ge \frac{\alpha}{2}\norm{\rho_{\phi_2} - \rho_{\phi_1}}_{L^2(\Omega)}^2,
	\end{align*}
	where the last line follows from the optimality condition  \eqref{eq:rhoOptimalityCondition} applied to $\rho_{\phi_1}$.
		
	We rearrange this estimate and deduce
	\begin{gather*}
		\sK(\rho_{\phi_1}, \phi_1) + \frac{\alpha}{2}\norm{\rho_{\phi_2} - \rho_{\phi_1} }_{L^2(\Omega)}^2 \le \sK(\rho_{\phi_2}, \phi_1) = \sK(\rho_{\phi_2}, \phi_2) + \int_\Omega \rho_{\phi_2}(\phi_2 - \phi_1) \,dx \\
		\le \sK(\rho_{\phi_1}, \phi_2) + \int_\Omega \rho_{\phi_2}(\phi_2 - \phi_1) \,dx = \sK(\rho_{\phi_1}, \phi_1) + \int_\Omega \rho_{\phi_1}(\phi_1 - \phi_2) + \rho_{\phi_2}(\phi_2 - \phi_1) \,dx,
	\end{gather*}
	where the second line follows from $\rho_{\phi_2}$ minimizing $\sK$ when $\phi = \phi_2$. We simplify and use Cauchy-Schwarz to obtain
	\[
	\frac{\alpha}{2} \norm{\rho_{\phi_2} - \rho_{\phi_1}}_{L^2(\Omega)}^2 \le \int_\Omega (\rho_{\phi_2} - \rho_{\phi_1})(\phi_2 - \phi_1) \,dx \le \norm{\rho_{\phi_2} - \rho_{\phi_1}}_{L^2(\Omega)}\norm{\phi_2 - \phi_1}_{L^2(\Omega)},
	\]
	which implies the desired estimate.
\end{proof}

\section{Well-posedness of \texorpdfstring{\eqref{eq:PKS0}}{P\_\textepsilon}}\label{sec:exist}

The section is devoted to the proof of Theorem \ref{thm:existence}. The existence will be proven under only the assumptions \ref{hyp:fMinimalAssumptions} and \ref{hyp:fGrowthCondition} using a discrete time scheme based on the natural energy structure of the system -- the same energy structure that is essential to the limit $\eps\to 0^+$. The uniqueness will follow under the additional hypothesis \ref{hyp:fStronglyConvex} from the Lipschitz property of the map $\rho \mapsto \rho_\phi$ (Proposition \ref{prop:rhoStability}).  We note that this same Lipschitz property could also be used to prove the existence of a solution via a contraction mapping argument. However, the more constructive approach we follow has the advantage of 
giving us the energy dissipation inequality -- which plays a crucial role in the limit $\eps\to0^+$ -- in a straightforward manner.

Throughout this section, $f$ satisfies \ref{hyp:fMinimalAssumptions} and \ref{hyp:fGrowthCondition} with $m>2$.

\subsection{A minimizing movements scheme}
We fix a time horizon $T > 0$, a small time step $\tau \in (0,T)$ (destined to go to $0$) and we denote $N_\tau :=\ceil{T/\tau} \in \bN$.
We set $(\rho^0_\tau, \phi^0_\tau) :=(\rho_{\phi_\init}, \phi_\init)$ and then solve iteratively the following minimization problems:
\begin{equation}\label{eq:minimizingMovementsScheme}
	(\rho^n_\tau,\phi^n_\tau) \in \argmin_{(\rho,\phi) \in L^m(\Omega) \times H^1(\Omega)}\left\{ \eps\frac{\norm{ \phi-\phi^{n-1}_\tau}_{L^2(\Omega)}^2}{2\tau} + \sG(\rho,\phi) \right\}	\qquad	\text{for } n = 1,2,\dots,N_\tau,
\end{equation}
where
\begin{equation}\label{eq:minMoveEnergy}
	\sG(\rho,\phi) :=
	\begin{cases}
		\displaystyle \frac{1}{\eps}\int_\Omega f(\rho)-  \rho \phi + \frac {\sigma} {2}   |\phi|^2 \,dx + \frac{\eps} {2} \int_\Omega |\na \phi|^2\, dx & \text{if } (\rho,\phi) \in (\cP_\ac(\Omega) \cap L^m(\Omega)) \times H^1(\Omega), \\
		+\infty	&\text{otherwise in } L^m(\Omega) \times H^1(\Omega).
	\end{cases}
\end{equation}

We first need to check that this iterative scheme is well-defined, which requires the lower semicontinuity and coercivity of the functional. More precisely, we can show:
\begin{lemma}\label{lem:cont}

Given $\xi \in L^2(\Omega)$, the functional 
\begin{equation} \label{eq:minMoveOverallEnergy}
	L^m(\Omega) \times H^1(\Omega) \ni (\rho,\phi) \mapsto \eps\frac{\| \phi-\xi\|_{L^2(\Omega)}^2}{2\tau} + \sG(\rho,\phi) \in \bR \cup \set{+\infty}
\end{equation}
is  sequentially lower semicontinuous for the weak convergence in  $L^m(\Omega)\times H^1(\Omega)$ and we have
\begin{align} 
\sG(\rho,\phi) 
& \geq 
   \frac{c}{\eps} \left[  \int_\Omega \rho^m\, dx  +  \int_\Omega   |\phi|^2 \,dx
+\eps^2 \int_\Omega |\na \phi|^2\, dx \right] - \frac C \eps  \label{eq:Gcoerc}
\end{align}
for some constants $c>0$ and $C>0$ that depend only on $\Omega$, $f$, and $\sigma$.
\end{lemma}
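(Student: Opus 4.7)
The proof will proceed in two parts, treating the coercivity estimate \eqref{eq:Gcoerc} first (since it will also be used to extract compactness for the lower semicontinuity argument), and then the sequential weak lower semicontinuity of the functional in \eqref{eq:minMoveOverallEnergy}.

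For the coercivity \eqref{eq:Gcoerc}, the plan is to handle the indefinite bilinear coupling $-\frac{1}{\eps}\int_\Omega \rho \phi \, dx$ by absorbing it into the two nonnegative terms $\frac{1}{\eps}\int_\Omega f(\rho)\, dx$ and $\frac{\sigma}{2\eps}\int_\Omega |\phi|^2 \, dx$. First, \ref{hyp:fMinimalAssumptions} (convexity with $f(0)=f'(0)=0$) gives $f\geq 0$, and combined with \ref{hyp:fGrowthCondition} yields $f(u)\geq C_1 u^m - C_1 R_1^m$ for all $u\geq 0$. Next, Young's inequality gives $\rho\phi \leq \frac{\sigma}{4}\phi^2 + \frac{1}{\sigma}\rho^2$, so that
\[
-\rho\phi + \tfrac{\sigma}{2}|\phi|^2 \geq \tfrac{\sigma}{4}|\phi|^2 - \tfrac{1}{\sigma}\rho^2.
\]
Since $m>2$, Young's inequality again gives $\rho^2 \leq \delta \rho^m + C(\delta)$ for any $\delta>0$. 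Choosing $\delta = \sigma C_1/2$ and combining with the lower bound on $f$ produces $f(\rho) - \frac{1}{\sigma}\rho^2 \geq \frac{C_1}{2}\rho^m - C$. Putting everything together and adding the already nonnegative gradient term $\frac{\eps}{2}\int_\Omega |\na\phi|^2$ yields \eqref{eq:Gcoerc} with $c=\min\{C_1/2,\sigma/4,1/2\}$.

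For the sequential weak lower semicontinuity, I would consider an arbitrary sequence $(\rho_n,\phi_n) \rightharpoonup (\rho,\phi)$ in $L^m(\Omega)\times H^1(\Omega)$ along which the functional in \eqref{eq:minMoveOverallEnergy} is bounded (otherwise there is nothing to prove), and pass to the limit term by term. The terms $\int_\Omega |\na\phi_n|^2\,dx$ and $\|\phi_n - \xi\|_{L^2}^2$ are convex and continuous on $H^1(\Omega)$ and $L^2(\Omega)$ respectively, hence weakly lower semicontinuous. The convexity of $f$, together with $f\geq 0$, gives that $\rho\mapsto \int_\Omega f(\rho)\,dx$ is weakly lsc on $L^m(\Omega)$ (for instance by Mazur's lemma and Fatou). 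The constraint $\rho\in\cP_\ac(\Omega)$, that is $\rho\geq 0$ a.e.\ and $\int_\Omega \rho\,dx=1$, is linear and preserved under weak limits, so the implicit indicator in $\sG$ is also lsc.

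The only term that is not convex is the bilinear coupling $-\frac{1}{\eps}\int_\Omega \rho_n\phi_n\,dx$, and this will be the main obstacle. My plan is to show that this term is in fact sequentially weakly \emph{continuous}, by exploiting the compact embedding $H^1(\Omega)\hookrightarrow L^{m'}(\Omega)$ with $m'=m/(m-1)$, which is valid because $m>2$ forces $m'<2\leq 2^\ast$ (and $\Omega$ is bounded with Lipschitz boundary). Rellich--Kondrachov then gives $\phi_n \to \phi$ strongly in $L^{m'}(\Omega)$, and since $\rho_n\rightharpoonup \rho$ in $L^m(\Omega)$, the weak-strong duality pairing yields
\[
\int_\Omega \rho_n \phi_n\,dx \longrightarrow \int_\Omega \rho\phi\,dx.
\]
Combining all the terms gives the desired lower semicontinuity of \eqref{eq:minMoveOverallEnergy}, which together with the coercivity \eqref{eq:Gcoerc} (providing weak compactness of sublevel sets in $L^m(\Omega)\times H^1(\Omega)$) will ensure that the minimizing movements scheme \eqref{eq:minimizingMovementsScheme} is well-posed by the direct method of the calculus of variations.
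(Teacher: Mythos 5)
Your proposal is correct and takes essentially the same approach as the paper: coercivity via the growth bound $f(u)\geq C_1u^m-C$ plus Young's inequality exploiting $m>2$ (you absorb the cross term into the $\rho^m$-part through $\rho^2\leq\delta\rho^m+C(\delta)$, while the paper absorbs it into the $\frac{\sigma}{2}\vert\phi\vert^2$-part through the exponent $\frac{m}{m-1}<2$), and lower semicontinuity by convexity of every term except the bilinear coupling, which is made weakly continuous by Rellich compactness (you use $H^1(\Omega)\hookrightarrow\hookrightarrow L^{m/(m-1)}(\Omega)$ paired with weak $L^m$ convergence of $\rho_n$, the paper uses strong $L^2$ convergence of $\phi_n$ paired with weak $L^2$ convergence of $\rho_n$). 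These are cosmetic variants of the same argument, and both yield the stated constants and the well-posedness of the minimizing movements step.
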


\begin{proof}[Proof of Lemma \ref{lem:cont}]
Given any sequence  $\seq{(\rho_n, \phi_n)}_n$ such that  $\rho_n\rightharpoonup \rho$ weakly in $L^m(\Omega)$ and $\phi_n\rightharpoonup \phi$ weakly in $H^1(\Omega)$ (and thus  strongly in $L^2(\Omega)$), it is easy to check that these convergences and the lower semicontinuity of $\sK$ give the lower semicontinuity of $\sG$:
$$ \liminf_{n\to\infty}  \sG(\rho_n,\phi_n)\geq  \sG(\rho,\phi), $$
which is the first part of the lemma.

Next, we write, using \eqref{eq:coerc}:
\begin{align} 
\sG(\rho,\phi) 
& \geq 
   \frac{C_1}{2\eps}  \int_\Omega \rho^m\, dx -  \frac{C}{\eps} \int_\Omega \phi^{\frac{m}{m-1}}\, dx - \frac{C}{\eps}
 +  \frac{1}{\eps}\int_\Omega \frac {\sigma} {2}   |\phi|^2 \,dx + \frac{\eps} {2} \int_\Omega |\na \phi|^2\, dx\nonumber \\
 & \geq 
   \frac{C_1}{2\eps}  \int_\Omega \rho^m\, dx +\frac {\sigma} {4\eps}\int_\Omega    |\phi|^2 \,dx    + \frac{\eps} {2} \int_\Omega |\na \phi|^2\, dx + 
\frac 1 \eps\int_\Omega 
   \frac {\sigma} {4}  |\phi|^2- C 
   \phi^{\frac{m}{m-1}}\, dx - \frac{C}{\eps}.
\end{align}
Since $\frac{m}{m-1}<2$, the function $u \mapsto   \frac {\sigma} {4} u^2-C u^{\frac{m}{m-1}}$ is bounded below and so \eqref{eq:Gcoerc} follows.
\end{proof}

\begin{proposition}[existence of minimizers]\label{prop:minMove-existenceUniqueness}
We have:
	\begin{enumerate}[label=(\roman*)]
		\item For any $\xi \in L^2(\Omega)$ and  $\tau \in (0,T)$, there exists a minimizer $(\ul\rho,\ul\phi) \in (\cP_\ac(\Omega) \cap L^m(\Omega)) \times H^1(\Omega)$ of the energy \eqref{eq:minMoveOverallEnergy}. 
		\item Any minimizer is of the form $(\rho_{\ul\phi}, \ul\phi)$, where $\rho_{\ul\phi}$ is the minimizer of \eqref{eq:minrhom} with $\phi = \ul\phi$.
	\end{enumerate}
\end{proposition}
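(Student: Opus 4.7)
The plan is to apply the direct method of the calculus of variations, using Lemma \ref{lem:cont} as the main ingredient, and then to exploit the separated structure of $\sG$ to characterize the optimal $\ul\rho$.

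\textbf{Existence.} First I would check that the infimum is finite: a concrete competitor like $(\rho_{\xi},\xi)$ (or even the constant pair $(|\Omega|^{-1},\xi)$) gives a finite value of the functional, so the infimum is not $+\infty$; the coercivity estimate \eqref{eq:Gcoerc} shows it is not $-\infty$. Next, I take a minimizing sequence $\seq{(\rho_n,\phi_n)}_n$ in $(\cP_\ac(\Omega)\cap L^m(\Omega))\times H^1(\Omega)$. Along this sequence the functional is bounded above by some constant $M$, and since $\eps\|\phi-\xi\|_{L^2}^2/(2\tau)\geq 0$, the bound \eqref{eq:Gcoerc} gives a uniform bound on $\|\rho_n\|_{L^m(\Omega)}$, $\|\phi_n\|_{L^2(\Omega)}$ and $\|\na\phi_n\|_{L^2(\Omega)}$, i.e.\ on $\|\phi_n\|_{H^1(\Omega)}$. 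Passing to a subsequence (not relabeled), $\rho_n\rightharpoonup\ul\rho$ weakly in $L^m(\Omega)$ and $\phi_n\rightharpoonup\ul\phi$ weakly in $H^1(\Omega)$ (and strongly in $L^2(\Omega)$ by Rellich--Kondrachov).

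I then need to check the limit is admissible. Nonnegativity $\ul\rho\geq 0$ and the mass constraint $\int_\Omega \ul\rho\,dx=1$ follow from weak $L^m$-convergence (integration against $\chi_\Omega$ and against the indicator of $\{\ul\rho<0\}$). Thus $\ul\rho\in\cP_\ac(\Omega)\cap L^m(\Omega)$. Finally, lower semicontinuity of the overall functional \eqref{eq:minMoveOverallEnergy} from Lemma \ref{lem:cont}, together with strong $L^2$-convergence of $\phi_n$ to $\ul\phi$ (which handles the $\|\phi-\xi\|_{L^2}^2$ term as a continuous, not just lsc, piece), gives
\[
\eps\frac{\|\ul\phi-\xi\|_{L^2(\Omega)}^2}{2\tau}+\sG(\ul\rho,\ul\phi)\leq \liminf_{n\to\infty}\left(\eps\frac{\|\phi_n-\xi\|_{L^2(\Omega)}^2}{2\tau}+\sG(\rho_n,\phi_n)\right),
\]
so $(\ul\rho,\ul\phi)$ is a minimizer.

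\textbf{Characterization of the minimizer.} For (ii), I observe that $\sG(\rho,\phi)$ decouples: the quadratic $L^2$-term $\eps\|\phi-\xi\|_{L^2}^2/(2\tau)$, the potential term $\tfrac{\sigma}{2\eps}\|\phi\|_{L^2}^2$, and the gradient term $\tfrac{\eps}{2}\|\na\phi\|_{L^2}^2$ are all independent of $\rho$. Hence, for the minimizer $(\ul\rho,\ul\phi)$, $\ul\rho$ must minimize
\[
\rho\mapsto \frac{1}{\eps}\int_\Omega f(\rho)-\rho\,\ul\phi\,dx=\frac{1}{\eps}\sK(\rho;\ul\phi)
\]
over $\cP_\ac(\Omega)\cap L^m(\Omega)$. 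Since $\ul\phi\in H^1(\Omega)\subset L^2(\Omega)$, Proposition \ref{prop:rho}-(i) applies and gives a unique minimizer of $\sK(\cdot;\ul\phi)$, namely $\rho_{\ul\phi}$. Therefore $\ul\rho=\rho_{\ul\phi}$, as claimed.

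The only step requiring a little care is verifying that the admissibility constraints $\ul\rho\geq 0$ and $\int\ul\rho\,dx=1$ survive the weak $L^m$-limit; everything else is a routine direct-method argument once the coercivity and lower semicontinuity from Lemma \ref{lem:cont} are in hand. I anticipate no real obstacle, as the separated structure of $\sG$ in $\rho$ is precisely what links this proposition back to the already-solved minimization in Proposition \ref{prop:rho}.
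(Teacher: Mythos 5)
Your proposal is correct and follows essentially the same route as the paper: existence via the direct method using the coercivity and weak lower semicontinuity from Lemma \ref{lem:cont}, and the characterization (ii) via the observation that the only $\rho$-dependent part of $\sG$ is $\frac{1}{\eps}\sK(\cdot\,;\ul\phi)$, so the uniqueness in Proposition \ref{prop:rho}-(i) forces $\ul\rho=\rho_{\ul\phi}$ (the paper phrases this as a competitor argument with $(\rho_{\ul\phi},\ul\phi)$). One tiny caveat: since $\xi$ is only assumed to lie in $L^2(\Omega)$, your suggested competitors $(\rho_\xi,\xi)$ or $(|\Omega|^{-1},\xi)$ need not be admissible ($\phi$ must be in $H^1(\Omega)$); any admissible pair such as $(|\Omega|^{-1},0)$ shows the infimum is finite.
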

Note that when $f$ satisfies \ref{hyp:fStronglyConvex}, 
one can show that $\sG$ is strictly convex (at least for small enough $\tau$) and that the minimizer is unique. Such a fact is however not  necessary for our analysis. 
In the sequel, we arbitrarily pick one solution of \eqref{eq:minimizingMovementsScheme} for each $n$.

\begin{proof}[Proof of Proposition \ref{prop:minMove-existenceUniqueness}]
Lemma \ref{lem:cont} implies that the functional \eqref{eq:minMoveOverallEnergy} is bounded below 
and that any minimizing sequence is bounded in $L^m(\Omega) \times H^1(\Omega)$ and hence converges weakly (up to a subsequence) to a minimizer. Since the limit is still in $(\cP_\ac(\Omega) \cap L^m(\Omega)) \times H^1(\Omega)$, the existence of a minimizer follows.

Furthermore, given any minimizer $(\ul\rho, \ul\phi)$ of  \eqref{eq:minMoveOverallEnergy}, we have  (using $(\rho_{\ul\phi}, \ul\phi)$ as a competitor):
$$\eps\frac{\| \ul\phi-\xi\|_{L^2(\Omega)}^2}{2\tau} + \sG(\ul\rho, \ul\phi) \le  \eps\frac{\| \ul\phi-\xi\|_{L^2(\Omega)}^2}{2\tau} +\sG(\rho_{\ul\phi}, \ul\phi)$$
which, after simplifying the terms that are identical on both sides, reduces to
$$
\frac{1}{\eps}\int_\Omega f(\ul\rho)-  {\ul\rho}\, {\ul\phi} \leq \frac{1}{\eps}\int_\Omega f(\rho_{\ul\phi})-  \rho_{\ul\phi}\ul\phi .
$$
Proposition \ref{prop:rho} (i) implies $\ul\rho = \rho_{\ul\phi}$.
 \end{proof}

\medskip

In view of the previous proposition, with $\tau \in (0,T)$ fixed,  we can use \eqref{eq:minimizingMovementsScheme} to define a sequence   $\seq{(\rho^n_\tau, \phi^n_\tau)}_n \subset (\cP_\ac(\Omega)\cap L^m(\Omega)) \times H^1(\Omega)$. 
A straightforward perturbation argument of $\sG$ along the $\phi$ variable with the $\rho$ variable fixed gives the following  Euler-Lagrange equation:
\begin{lemma}[optimality conditions of \eqref{eq:minimizingMovementsScheme}] \label{lemma:minMove-optimalityConditions}
	For all $\tau \in (0,T)$ and any step $n \in \set{1,\dots,N_\tau}$ of \eqref{eq:minimizingMovementsScheme},
	we have
	\begin{gather}
		0 = \int_\Omega \frac{\varepsilon}{\tau}(\phi^n_\tau - \phi^{n-1}_\tau)\psi + \varepsilon \nabla\phi^n_\tau \cdot \nabla \psi + \frac{1}{\varepsilon}(\sigma\phi^n_\tau - \rho^n_\tau)\psi \,dx	\qquad	\forall \, \psi\in H^1(\Omega).\label{eq:minMove-phiOptimality}
	\end{gather}
\end{lemma}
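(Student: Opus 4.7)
My plan is to obtain the Euler-Lagrange equation \eqref{eq:minMove-phiOptimality} by a one-parameter variation of the energy along the $\phi$-component of the minimizer, while holding $\rho = \rho^n_\tau$ fixed. The key observation is that $(\rho^n_\tau, \phi^n_\tau)$ is a joint minimizer of the augmented functional in \eqref{eq:minMoveOverallEnergy} over $L^m(\Omega) \times H^1(\Omega)$, so in particular, for any fixed test function $\psi \in H^1(\Omega)$ and any $s \in \bR$, the pair $(\rho^n_\tau, \phi^n_\tau + s\psi)$ is an admissible competitor (it lies in $(\cP_\ac(\Omega) \cap L^m(\Omega)) \times H^1(\Omega)$ since $\rho^n_\tau$ already satisfies the probability measure constraint and $\phi^n_\tau + s\psi \in H^1(\Omega)$).

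Accordingly, I would define the real-valued function
\[
g(s) := \frac{\eps}{2\tau}\norm{\phi^n_\tau + s\psi - \phi^{n-1}_\tau}_{L^2(\Omega)}^2 + \sG(\rho^n_\tau, \phi^n_\tau + s\psi),
\]
which attains its minimum at $s = 0$ by construction. Since $\rho^n_\tau$ is fixed, only the quadratic-in-$\phi$ terms and the linear coupling term $-\rho^n_\tau \phi$ contribute nontrivially to the $s$-dependence, and each of these is smooth in $s$. A direct differentiation yields
\[
g'(0) = \int_\Omega \frac{\eps}{\tau}(\phi^n_\tau - \phi^{n-1}_\tau)\psi + \eps\, \nabla\phi^n_\tau \cdot \nabla\psi + \frac{1}{\eps}\bigl(\sigma\phi^n_\tau - \rho^n_\tau\bigr)\psi\, dx.
\]
Replacing $\psi$ by $-\psi$ shows that $s = 0$ is an interior minimum, so $g'(0) = 0$, which is exactly \eqref{eq:minMove-phiOptimality}.

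The only point requiring a brief justification is that each integrand is in $L^1(\Omega)$: the terms involving $\phi^n_\tau$, $\phi^{n-1}_\tau$, $\psi$, and $\nabla\phi^n_\tau \cdot \nabla\psi$ are standard via Cauchy-Schwarz, while $\rho^n_\tau\psi \in L^1(\Omega)$ follows from $\rho^n_\tau \in L^m(\Omega)$ together with $\psi \in H^1(\Omega) \hookrightarrow L^2(\Omega) \hookrightarrow L^{m/(m-1)}(\Omega)$ (the last embedding holds since $m > 2$ and $\Omega$ is bounded). This is essentially the only technical point, and it is routine; there is no genuine obstacle in the argument, which is why the statement is described as following from a ``straightforward perturbation argument.'' Note also that we do not need $\rho^n_\tau$ to be the minimizer of $\sK(\cdot;\phi^n_\tau + s\psi)$ for each $s$: the inequality $g(s) \ge g(0)$ uses only that $(\rho^n_\tau, \phi^n_\tau + s\psi)$ is an admissible competitor.
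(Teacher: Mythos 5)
Your proof is correct and is exactly the argument the paper intends: the lemma is stated there as following from ``a straightforward perturbation argument of $\sG$ along the $\phi$ variable with the $\rho$ variable fixed,'' which is precisely your one-parameter variation $s \mapsto (\rho^n_\tau, \phi^n_\tau + s\psi)$ with $g'(0)=0$. The integrability remarks and the observation that $\rho^n_\tau$ need not minimize $\sK(\cdot;\phi^n_\tau+s\psi)$ are accurate and complete the justification.
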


\medskip

Next, we derive the classical estimates for discrete gradient flows. 
\begin{lemma} \label{lemma:gradientFlowEstimates}
	For any step size $\tau \in (0,T)$ and $n \in \set{1,\dots,N_\tau}$  we have:
	\begin{enumerate}[label=(\roman*)]
		\item (nonincreasing energy): $\sG(\rho^n_\tau, \phi^n_\tau) \le \sG(\rho^{n-1}_\tau, \phi^{n-1}_\tau) \le \sG(\rho_{\phi_\init}, \phi_\init)$;
		
		\item (discrete dissipation estimate): $\displaystyle \sum_{j=1}^{n} \frac{\varepsilon}{2\tau} \norm{\phi^j_\tau - \phi^{j-1}_\tau}_{L^2(\Omega)}^2 \le \sG(\rho_{\phi_\init}, \phi_\init) - \sG(\rho_\tau^n, \phi_\tau^n).$ 
		
	\end{enumerate}
\end{lemma}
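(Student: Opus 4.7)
The plan is to exploit the definition of $(\rho^n_\tau, \phi^n_\tau)$ as a minimizer of the penalized functional in \eqref{eq:minimizingMovementsScheme} by comparing its energy to that of the admissible competitor $(\rho^{n-1}_\tau, \phi^{n-1}_\tau)$ from the previous step. Since the competitor makes the penalization term vanish, this single comparison yields both claims after trivial manipulations.

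More precisely, first I would fix $n \in \{1,\dots,N_\tau\}$ and test the minimality of $(\rho^n_\tau, \phi^n_\tau)$ against $(\rho^{n-1}_\tau, \phi^{n-1}_\tau) \in (\cP_\ac(\Omega) \cap L^m(\Omega)) \times H^1(\Omega)$, which is admissible (note $\rho^{n-1}_\tau$ lies in this class by Proposition \ref{prop:minMove-existenceUniqueness} (ii), or by the choice of $\rho^0_\tau = \rho_{\phi_\init}$ for the base case). This gives
\begin{equation*}
\varepsilon\frac{\|\phi^n_\tau - \phi^{n-1}_\tau\|_{L^2(\Omega)}^2}{2\tau} + \sG(\rho^n_\tau, \phi^n_\tau) \le \varepsilon\frac{\|\phi^{n-1}_\tau - \phi^{n-1}_\tau\|_{L^2(\Omega)}^2}{2\tau} + \sG(\rho^{n-1}_\tau, \phi^{n-1}_\tau) = \sG(\rho^{n-1}_\tau, \phi^{n-1}_\tau).
\end{equation*}
Dropping the nonnegative first term on the left-hand side yields the single-step energy decrease $\sG(\rho^n_\tau, \phi^n_\tau) \le \sG(\rho^{n-1}_\tau, \phi^{n-1}_\tau)$, and iterating back to $n = 0$ together with $(\rho^0_\tau, \phi^0_\tau) = (\rho_{\phi_\init}, \phi_\init)$ proves (i).

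For (ii), I would instead rearrange the same inequality (without discarding the penalty term) to get
\begin{equation*}
\frac{\varepsilon}{2\tau}\|\phi^j_\tau - \phi^{j-1}_\tau\|_{L^2(\Omega)}^2 \le \sG(\rho^{j-1}_\tau, \phi^{j-1}_\tau) - \sG(\rho^j_\tau, \phi^j_\tau) \qquad \forall \, j \in \{1,\dots,n\},
\end{equation*}
and then sum from $j = 1$ to $j = n$. The right-hand side telescopes to $\sG(\rho^0_\tau, \phi^0_\tau) - \sG(\rho^n_\tau, \phi^n_\tau) = \sG(\rho_{\phi_\init}, \phi_\init) - \sG(\rho^n_\tau, \phi^n_\tau)$, yielding the desired dissipation estimate.

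There is no real obstacle here: this is the standard minimizing-movements argument, and the only thing to check is the admissibility of the competitor (ensured by the scheme itself) and the fact that $\sG(\rho_{\phi_\init}, \phi_\init) < +\infty$ (which holds since $\phi_\init \in H^1(\Omega)$ is nonnegative and $\rho_{\phi_\init} \in \cP_\ac(\Omega) \cap L^m(\Omega)$ by Proposition \ref{prop:rho}, together with the bound \eqref{eq:rhophibd}).
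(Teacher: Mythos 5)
Your proposal is correct and is essentially identical to the paper's proof: both test the minimality of $(\rho^n_\tau,\phi^n_\tau)$ against the competitor $(\rho^{n-1}_\tau,\phi^{n-1}_\tau)$, drop the nonnegative penalty term to obtain (i), and sum (telescope) the resulting one-step inequality to obtain (ii).
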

\begin{proof}
	We use $(\rho^{n-1}_\tau, \phi^{n-1}_\tau)$ as a competitor in  the minimization problem \eqref{eq:minimizingMovementsScheme} at step $n$  to get:
		\begin{equation}\label{eq:energyn}
	\varepsilon\frac{\norm{\phi^n_\tau - \phi^{n-1}_\tau}_{L^2(\Omega)}^2}{2\tau} + \sG(\rho^n_\tau, \phi^n_\tau) \le \sG(\rho^{n-1}_\tau, \phi^{n-1}_\tau).
	\end{equation}
This implies (i) and also gives (after summation)
\begin{equation}\label{eq:disdis}
\sG(\rho^n_\tau, \phi^n_\tau) + 	\sum_{j=1}^n \frac{\varepsilon \norm{\phi^j_\tau - \phi^{j-1}_\tau}_{L^2(\Omega)}^2}{2\tau} \le \sG(\rho_{\phi_\init}, \phi_\init) 
\end{equation}
which is (ii).
	
 \end{proof}

\label{subsection:interpolants&uniformEstimates}

We now define the usual piecewise-constant and piecewise-affine interpolations: 
For convenience, we denote $t^n_\tau :=n\tau$
and define
\begin{enumerate}
	\item The \emph{piecewise-constant interpolant}: 
	\begin{equation} \label{eq:pwConstantInterp}
		\ol\phi_\tau(t,x) :=
		\begin{cases}
			\phi_\tau^n(x)	&	\text{if } t \in (t^{n-1}_\tau, t^n_\tau] \text{ for some } n \in \set{1,\dots,N_\tau},	\\
			\phi_\init(x)	&	\text{if } t = 0.
		\end{cases}
	\end{equation}
	The interpolant $\ol\rho_\tau \colon [0,N_\tau \tau] \to \cP_\ac(\Omega) \cap L^m(\Omega)$ is defined analogously.
	
	\item The \emph{piecewise-affine interpolant}: 
	\begin{equation}\label{eq:pwAffineInterp}
		\wh{\phi}_\tau(t,x) :=
		\begin{cases}
			(1-\frac{t-t_\tau^{n-1}}{\tau})\phi_\tau^{n-1} (x)+ \frac{t-t_\tau^{n-1}}{\tau}\phi_\tau^n	(x) &	\text{if } t \in (t_\tau^{n-1}, t_\tau^n] \text{ for some } n \in \set{1,\dots,N_\tau},	\\
			\phi_\init(x)	&	\text{if } t=0.
		\end{cases}
	\end{equation}
\end{enumerate}

With these notations, we have:
\begin{proposition}[semidiscretization in time of \eqref{eq:PKS0}] \label{prop:semidiscretizationInTime}
Suppose $\phi_\init \in H^1(\Omega)$. Then for any $\psi \in L^2(0,T;H^1(\Omega))$
		\begin{equation}\label{eq:semidiscretization-parabolicEq}
			\begin{gathered}
				\int_0^T\int_\Omega \varepsilon \pd {\wh{\phi}_\tau} t  \psi + \varepsilon \nabla \ol{\phi}_\tau \cdot \nabla \psi + \frac{\sigma}{\varepsilon} \ol{\phi}_\tau \psi - \frac{1}{\varepsilon}\ol{\rho}_\tau \psi \,dx \,dt 
				=  
				0.
			\end{gathered}
		\end{equation}
\end{proposition}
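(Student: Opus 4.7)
The proof is essentially an exercise in translating the Euler--Lagrange identity from Lemma \ref{lemma:minMove-optimalityConditions} into its time-integrated form using the definitions of the two interpolants.

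First I would fix the test function $\psi\in L^2(0,T;H^1(\Omega))$. For each $n\in\{1,\dots,N_\tau\}$ and for almost every $t\in(t_\tau^{n-1},t_\tau^n]$, the function $\psi(t,\cdot)$ belongs to $H^1(\Omega)$, so Lemma \ref{lemma:minMove-optimalityConditions} applied with this admissible test function gives
\begin{equation*}
0 = \int_\Omega \frac{\varepsilon}{\tau}(\phi_\tau^n - \phi_\tau^{n-1})\,\psi(t,\cdot) + \varepsilon\,\nabla\phi_\tau^n\cdot\nabla\psi(t,\cdot) + \frac{1}{\varepsilon}(\sigma\phi_\tau^n - \rho_\tau^n)\,\psi(t,\cdot)\,dx.
\end{equation*}

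Next I would identify each of the piecewise-in-time factors with the corresponding interpolant. From \eqref{eq:pwAffineInterp} the affine interpolant is differentiable on each open subinterval $(t_\tau^{n-1},t_\tau^n)$ with
\begin{equation*}
\partial_t \wh{\phi}_\tau(t,\cdot) = \frac{\phi_\tau^n-\phi_\tau^{n-1}}{\tau} \qquad \text{for a.e. } t\in(t_\tau^{n-1},t_\tau^n),
\end{equation*}
while from \eqref{eq:pwConstantInterp} the piecewise-constant interpolants satisfy $\ol{\phi}_\tau(t,\cdot)=\phi_\tau^n$ and $\ol{\rho}_\tau(t,\cdot)=\rho_\tau^n$ on the same subinterval. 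Substituting these identities into the optimality condition yields, for a.e. $t\in(t_\tau^{n-1},t_\tau^n)$,
\begin{equation*}
0 = \int_\Omega \varepsilon\,\partial_t\wh{\phi}_\tau\,\psi + \varepsilon\,\nabla\ol{\phi}_\tau\cdot\nabla\psi + \frac{\sigma}{\varepsilon}\ol{\phi}_\tau\,\psi - \frac{1}{\varepsilon}\ol{\rho}_\tau\,\psi\,dx.
\end{equation*}

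Finally I would integrate this identity over $(t_\tau^{n-1},t_\tau^n)$ and sum over $n=1,\dots,N_\tau$, which covers $[0,T]$ (up to the set of measure zero given by the grid points and the possible overshoot $[T,N_\tau\tau]$, which can be handled by restricting to $[0,T]$ from the start or noting that the identity holds on the whole $[0,N_\tau\tau]$). This delivers \eqref{eq:semidiscretization-parabolicEq}. The only subtlety worth recording is a measurability/Fubini check ensuring that $\psi(t,\cdot)$ is an admissible test function for a.e. $t$ and that the resulting integrand is in $L^1((0,T)\times\Omega)$; this follows from $\psi\in L^2(0,T;H^1(\Omega))$ together with the uniform-in-$n$ $H^1$-bounds on $\ol{\phi}_\tau$, the $L^m\hookrightarrow L^2$ control of $\ol{\rho}_\tau$, and the $L^2$-bound on $\partial_t\wh{\phi}_\tau$ provided by Lemma \ref{lemma:gradientFlowEstimates}(ii). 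There is no real obstacle here: the proposition is a direct rewriting of the discrete Euler--Lagrange equation in interpolant form.
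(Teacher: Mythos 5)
Your proposal is correct and follows the same route as the paper: apply the discrete Euler--Lagrange identity of Lemma \ref{lemma:minMove-optimalityConditions} with test function $\psi(t,\cdot)$ on each subinterval, rewrite the discrete quantities via the interpolants \eqref{eq:pwConstantInterp}--\eqref{eq:pwAffineInterp}, then integrate in time and sum over $n$. The brief measurability/integrability remark you add is a harmless extra not spelled out in the paper.
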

\begin{proof}
 \medskip
 
We use \eqref{eq:minMove-phiOptimality} to write for all $t \in (t_\tau^{n-1}, t_\tau^n)$: 
 	\begin{align*}
	\int_\Omega \varepsilon\pd{\wh{\phi}_\tau} t (t) \psi(t) \,dx = \int_\Omega \varepsilon\frac{\phi_\tau^n - \phi_\tau^{n-1}}{\tau} \psi(t) \,dx  
& = \int_\Omega -\varepsilon \nabla \phi_\tau^n \cdot \nabla \psi(t) - \frac{\sigma}{\varepsilon}\phi_\tau^n\psi(t) + \frac{1}{\varepsilon}\rho^n_\tau\psi(t) \,dx\\
& = \int_\Omega -\varepsilon \nabla \ol \phi_\tau(t)  \cdot \nabla \psi(t) - \frac{\sigma}{\varepsilon}\ol \phi_\tau(t) \psi(t) + \frac{1}{\varepsilon}\ol \rho_\tau(t)\psi(t) \,dx.
	\end{align*}
The result follows by integrating in time and summing over $n$.

\end{proof}

\subsection{Uniform-in-\texorpdfstring{$\tau$}{\texttau} estimates}

All the estimates derived in this section are independent of $\tau$ but depend on $\eps$ and other constants of the problem even though this dependence is not explicitly written.

The estimates of Lemma \ref{lemma:gradientFlowEstimates} imply:
\begin{lemma}[time regularity of chemoattractant interpolants] \label{lemma:timeRegularity-phi}
	There exist a constant $C$ independent of $\tau$ 
	 such that for any $\tau \in (0,T)$ and $0 \le s \le t \le T$,
	\begin{enumerate}[label=(\roman*)]
		
		\item $\norm{\pd{\wh{\phi}_\tau} t}_{L^2(0,T;L^2(\Omega))} < C$ and  $\norm{\wh{\phi}_\tau(t) - \wh{\phi}_\tau(s)}_{L^2(\Omega)} \le C\sqrt{t-s}$;
		
		\item $\norm{\ol{\phi}_\tau(t) - \wh{\phi}_\tau(t)}_{L^2(\Omega)} \le C\sqrt{\tau}$.
	\end{enumerate}
\end{lemma}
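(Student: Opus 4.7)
The proof relies entirely on the discrete dissipation estimate from Lemma \ref{lemma:gradientFlowEstimates}(ii), combined with the fact that $\sG$ is bounded below (which follows from \eqref{eq:Gcoerc} in Lemma \ref{lem:cont}). Indeed, the right-hand side of the dissipation estimate can be bounded by $\sG(\rho_{\phi_\init},\phi_\init) + \frac{C}{\varepsilon}$, which is a constant depending on $\varepsilon$ and the initial datum but independent of $\tau$. Thus we have
\[
\sum_{j=1}^{N_\tau} \frac{\norm{\phi^j_\tau - \phi^{j-1}_\tau}_{L^2(\Omega)}^2}{\tau} \le C
\]
uniformly in $\tau$.

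For part (i), I would start by observing that on each subinterval $(t_\tau^{n-1}, t_\tau^n)$, the piecewise-affine interpolant satisfies $\pd{\wh{\phi}_\tau}{t} = \tau^{-1}(\phi^n_\tau - \phi^{n-1}_\tau)$, so
\[
\int_0^{T} \norm{\pd{\wh{\phi}_\tau}{t}}_{L^2(\Omega)}^2 \,dt = \sum_{n=1}^{N_\tau} \tau \cdot \frac{\norm{\phi^n_\tau - \phi^{n-1}_\tau}_{L^2(\Omega)}^2}{\tau^2} = \sum_{n=1}^{N_\tau} \frac{\norm{\phi^n_\tau - \phi^{n-1}_\tau}_{L^2(\Omega)}^2}{\tau} \le C,
\]
which gives the $L^2_tL^2_x$ bound on the time derivative. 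The H\"older-type estimate then follows immediately from the fundamental theorem of calculus and Cauchy-Schwarz: since $\wh\phi_\tau$ is absolutely continuous in time with values in $L^2(\Omega)$,
\[
\norm{\wh\phi_\tau(t) - \wh\phi_\tau(s)}_{L^2(\Omega)} \le \int_s^t \norm{\pd{\wh\phi_\tau}{r}}_{L^2(\Omega)} \,dr \le \sqrt{t-s}\,\norm{\pd{\wh\phi_\tau}{t}}_{L^2(0,T;L^2(\Omega))} \le C\sqrt{t-s}.
\]

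For part (ii), on each subinterval $(t_\tau^{n-1}, t_\tau^n]$ a direct computation yields
\[
\ol\phi_\tau(t) - \wh\phi_\tau(t) = \Big(1 - \frac{t - t_\tau^{n-1}}{\tau}\Big)(\phi^n_\tau - \phi^{n-1}_\tau),
\]
so $\norm{\ol\phi_\tau(t) - \wh\phi_\tau(t)}_{L^2(\Omega)} \le \norm{\phi^n_\tau - \phi^{n-1}_\tau}_{L^2(\Omega)}$. To get the $\sqrt{\tau}$ bound, I would simply extract a single term from the discrete dissipation sum: $\norm{\phi^n_\tau - \phi^{n-1}_\tau}_{L^2(\Omega)}^2 \le C\tau$, giving the desired estimate. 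There is no real obstacle here; this is a routine distillation of well-known estimates for minimizing movement schemes. The only point requiring mild care is the uniform lower bound on $\sG$, but this is delivered by \eqref{eq:Gcoerc}.
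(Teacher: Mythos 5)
Your proof is correct and follows essentially the same route as the paper: both rest on the discrete dissipation estimate of Lemma \ref{lemma:gradientFlowEstimates}-(ii) together with the lower bound on $\sG$ from \eqref{eq:Gcoerc}, yielding the $L^2_t L^2_x$ bound on $\pd{\wh\phi_\tau}{t}$ and then the H\"older estimate by Cauchy--Schwarz. The only (immaterial) difference is in (ii), where the paper bounds $\norm{\ol\phi_\tau(t)-\wh\phi_\tau(t)}_{L^2(\Omega)}=\norm{\wh\phi_\tau(t^n_\tau)-\wh\phi_\tau(t)}_{L^2(\Omega)}$ using the H\"older estimate from (i), while you compute the difference explicitly and extract a single term of the dissipation sum — both give the same $C\sqrt{\tau}$ bound.
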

Note that the piecewise-constant interpolant $\ol\phi_\tau$ is, of course,  not continuous in time, but it satisfies an almost-H\"older estimate
$\norm{\ol{\phi}_\tau(t) - \ol{\phi}_\tau(s)}_{L^2(\Omega)} \le C  \sqrt{t-s + \tau}$ with the same $C$ as in the lemma.

\begin{proof}
First we note that
$$ \sG(\rho_{\phi_\init}, \phi_\init) - \inf_{(\rho,\phi) \in \cP_\ac(\Omega) \times H^1(\Omega)} \sG(\rho,\phi)  <\infty
$$
and so the discrete energy dissipation inequality (Lemma \ref{lemma:gradientFlowEstimates} (ii)) gives
	\[
	\int_0^T \norm{\pd{\wh{\phi}_\tau (t)} t}_{L^2(\Omega)}^2 \,dt = \sum_{j=1}^{N_\tau} \int_{t^{j-1}_\tau}^{t_\tau^j \wedge T} \norm{\frac{\phi_\tau^j - \phi_\tau^{j-1}}{\tau}}_{L^2(\Omega)}^2 \,dt \leq \sum_{j=1}^{N_\tau} \frac{1}{\tau} \norm{\phi_\tau^j - \phi_\tau^{j-1}}_{L^2(\Omega)}^2  \le C.
	\]
which implies (i).	

	To prove (ii), we note that the piecewise-constant and piecewise-affine interpolants agree at times on the mesh, and that the mesh has size $\tau$: If $t \in (t_\tau^{n-1}, t_\tau^n]$ we have $\ol{\phi}_\tau(t) = \phi_\tau^n = \wh{\phi}_\tau(t_\tau^n)$ and so
	\[
	\norm{\ol{\phi}_\tau(t) - \wh{\phi}_\tau(t) }_{L^2(\Omega)} = \norm{\wh{\phi}_\tau(t_\tau^n) - \wh{\phi}_\tau(t)}_{L^2(\Omega)} \le C \sqrt{t_\tau^n - t} \le C\sqrt{\tau}.
	\]
\end{proof}

Next, the boundedness of the energy (Lemma \ref{lemma:gradientFlowEstimates} (i)) together with the coercivity inequality \eqref{eq:Gcoerc} give the following estimates:
\begin{lemma} \label{lemma:aPioriEstimates-from-Energy}
	There exists a constant $C>0$, independent of $\tau \in (0,T)$,  such that
	\begin{enumerate}[label = (\roman*)]
		\item $\sup_{t \in [0,T]}\norm{\wh{\phi}_\tau(t)}_{H^1(\Omega)} \le C$;
		\item $\sup_{t \in [0,T]}\norm{\ol{\rho}_\tau (t)}_{L^m(\Omega)} \le C$;
	\end{enumerate}	
\end{lemma}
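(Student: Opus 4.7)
The plan is to combine the monotonicity of the discrete energy along the minimizing movement scheme with the coercivity inequality \eqref{eq:Gcoerc}, and then transfer the resulting bounds on the discrete iterates $(\rho_\tau^n,\phi_\tau^n)$ to the piecewise-constant and piecewise-affine interpolants.

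First I would check that the initial energy $\sG(\rho_{\phi_\init},\phi_\init)$ is finite. Since $\phi_\init\in H^1(\Omega)\hookrightarrow L^2(\Omega)$, Proposition \ref{prop:rho}(i) gives $\rho_{\phi_\init}\in \cP_\ac(\Omega)\cap L^m(\Omega)$ together with the explicit bound \eqref{eq:rhophibd}. Using \ref{hyp:fMinimalAssumptions} and \ref{hyp:fGrowthCondition}, one controls $\int_\Omega f(\rho_{\phi_\init})\,dx$ by $\|\rho_{\phi_\init}\|_{L^m(\Omega)}^m$ plus a constant, so the definition \eqref{eq:minMoveEnergy} and $\phi_\init\in H^1(\Omega)$ yield
\[
E_0 := \sG(\rho_{\phi_\init},\phi_\init) \le C\bigl(\eps,\sigma,f,\Omega,\|\phi_\init\|_{H^1(\Omega)}\bigr) < +\infty.
\]

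Next, Lemma \ref{lemma:gradientFlowEstimates}(i) gives $\sG(\rho_\tau^n,\phi_\tau^n)\le E_0$ for every $n\in\{1,\dots,N_\tau\}$, and the coercivity inequality \eqref{eq:Gcoerc} then yields
\[
\frac{c}{\eps}\left[\int_\Omega (\rho_\tau^n)^m\,dx + \int_\Omega |\phi_\tau^n|^2\,dx + \eps^2\int_\Omega |\na \phi_\tau^n|^2\,dx\right] \le E_0 + \frac{C}{\eps}.
\]
In particular, $\|\rho_\tau^n\|_{L^m(\Omega)}$ and $\|\phi_\tau^n\|_{H^1(\Omega)}$ are bounded uniformly in $n$ and $\tau$ (with a constant depending on $\eps$ but not on $\tau$).

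Finally, I transfer these bounds to the interpolants. The estimate (ii) is immediate from the piecewise-constant definition \eqref{eq:pwConstantInterp} of $\ol\rho_\tau$, since $\ol\rho_\tau(t)=\rho_\tau^n$ on $(t_\tau^{n-1},t_\tau^n]$. For (i), given $t\in(t_\tau^{n-1},t_\tau^n]$ one writes $\wh\phi_\tau(t)=(1-s)\phi_\tau^{n-1}+s\phi_\tau^n$ with $s=(t-t_\tau^{n-1})/\tau\in[0,1]$, and convexity of the $H^1$-norm gives $\|\wh\phi_\tau(t)\|_{H^1(\Omega)}\le (1-s)\|\phi_\tau^{n-1}\|_{H^1(\Omega)}+s\|\phi_\tau^n\|_{H^1(\Omega)}\le C$. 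The only mildly delicate point is the finiteness of $E_0$, which relies on Proposition \ref{prop:rho}(i); the rest is a direct application of the energy-dissipation structure already established.
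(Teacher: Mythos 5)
Your proposal is correct and follows essentially the same route as the paper: monotonicity of the discrete energy (Lemma \ref{lemma:gradientFlowEstimates}-(i)) plus the coercivity bound \eqref{eq:Gcoerc} give uniform-in-$\tau$ bounds on the iterates, and hence on $\ol\rho_\tau$, with $\wh\phi_\tau$ handled by convexity of the $H^1$-norm exactly as in the paper. One small quibble: your justification that $E_0<+\infty$ appeals to controlling $\int_\Omega f(\rho_{\phi_\init})\,dx$ by $\|\rho_{\phi_\init}\|_{L^m(\Omega)}^m$, but \ref{hyp:fGrowthCondition} is only a \emph{lower} growth bound on $f$, so this step is not warranted under the stated hypotheses; finiteness instead follows directly from minimality, since $\sK(\rho_{\phi_\init};\phi_\init)\le \sK(|\Omega|^{-1};\phi_\init)<+\infty$ by \eqref{eq:infK} and $\int_\Omega \rho_{\phi_\init}\phi_\init\,dx$ is finite by Cauchy--Schwarz, which gives $\int_\Omega f(\rho_{\phi_\init})\,dx<+\infty$ and hence $\sG(\rho_{\phi_\init},\phi_\init)<+\infty$.
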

\begin{proof}
Inequality \eqref{eq:Gcoerc} and Lemma \ref{lemma:gradientFlowEstimates}-(i) and imply
	\begin{equation}\label{eq:pwConstantEst-GLB}
		 \norm{\ol{\rho}_\tau(t)}_{L^m(\Omega)}^m	 +  \norm{ \ol{\phi}_\tau(t)}_{H^1(\Omega)}^2  \le C \sG(\ol{\rho}_\tau(t),\ol{\phi}_\tau(t)) + C \le C \sG(\rho_{\phi_\init}, \phi_\init) + C,  
	\end{equation}
	and the result (ii) follows. For (i), we note that $\wh{\phi}_\tau(t)$ is a convex combination of $\ol{\phi}_\tau(t-\tau)$ and $\ol{\phi}_\tau(t)$ and use the inequality above.
\end{proof}

\subsection{Passage to the limit \texorpdfstring{$\tau \to 0^+$}{\texttau\textrightarrow 0⁺} in the semidiscretization}
We now use these estimates to pass to the limit in \eqref{eq:semidiscretization-parabolicEq}   along some subsequence.
Note that we will not pass to the limit in the relation $\ol{\rho}_\tau = \rho_{\ol\phi_\tau}$ but instead show directly that the weak limit of $\pseq{\ol{\rho}_\tau}_\tau$ minimizes $\sK(\cdot,\phi(t))$ at each fixed $t$, so it is therefore equal to $\rho_{\phi(t)}$ by using the uniqueness of minimizers of the minimization problem \eqref{eq:minrhom}.
\begin{proposition}[convergence of the interpolants]\label{prop:convergenceOfInterpolants}
	Let $\seq{\tau_k}_k$ be a positive sequence converging to $0$. There exists a subsequence of $\seq{\tau_k}_k$ (not relabeled) and nonnegative functions 
	\begin{gather*}
		\phi \in H^1(0,T; L^2(\Omega)) \cap L^\infty(0,T; H^1(\Omega)) \quad\text{and}\quad
		\rho \in L^\infty(0,T; L^m(\Omega))
	\end{gather*}
	such that:
	\begin{enumerate}[label = (\roman*)]
		\item $\pd{\wh{\phi}_{\tau_k}} t \rightharpoonup \pd\phi t$ weakly in $L^2(0,T;L^2(\Omega))$;
		\item $\nabla \ol{\phi}_{\tau_k} \rightharpoonup \nabla\phi$ weakly-$\ast$ in $L^\infty(0,T; L^2(\Omega)^d)$;
		\item $\ol{\phi}_{\tau_k} \to \phi$ in $L^\infty(0,T; L^2(\Omega))$ and $\wh{\phi}_{\tau_k} \to \phi$ in $C([0,T]; L^2(\Omega))$;
		\item $\ol{\rho}_{\tau_k} \rightharpoonup \rho$ 
		weakly-$\ast$ in $L^\infty(0,T; L^m(\Omega))$;
	\end{enumerate} 
\end{proposition}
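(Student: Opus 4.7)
The plan is to combine the uniform-in-$\tau$ estimates from Lemmas \ref{lemma:timeRegularity-phi} and \ref{lemma:aPioriEstimates-from-Energy} with weak-$\ast$ compactness in dual separable spaces and an Aubin-Lions-Simon compactness argument for the strong convergence. Because several different subsequences must be extracted, I would work in successive stages and relabel at each step.

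First, I would handle the weak and weak-$\ast$ extractions. Lemma \ref{lemma:aPioriEstimates-from-Energy}(i) bounds both $\wh\phi_\tau$ and $\ol\phi_\tau$ in $L^\infty(0,T;H^1(\Omega))$, whose predual is separable, so along a subsequence we obtain weak-$\ast$ limits $\phi, \tilde\phi \in L^\infty(0,T;H^1(\Omega))$. Similarly, Lemma \ref{lemma:aPioriEstimates-from-Energy}(ii) produces the limit $\rho \in L^\infty(0,T;L^m(\Omega))$ of item (iv), and Lemma \ref{lemma:timeRegularity-phi}(i) gives a weak limit $\eta \in L^2(0,T;L^2(\Omega))$ of $\pd{\wh\phi_\tau}{t}$. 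A standard distributional argument (test with $\zeta(t)\psi(x)$ and integrate by parts in $t$) identifies $\eta = \pd\phi t$, yielding (i); statement (ii) is then a direct consequence of the weak-$\ast$ convergence of $\nabla\ol\phi_\tau$ in $L^\infty(0,T;L^2(\Omega)^d)$ (once we know $\tilde\phi = \phi$, established below).

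Second, for the strong convergence (iii), I would apply the Aubin-Lions-Simon lemma to the family $\pseq{\wh\phi_\tau}_\tau$: since $H^1(\Omega)$ embeds compactly into $L^2(\Omega)$ and $\wh\phi_\tau$ is bounded in $L^\infty(0,T;H^1(\Omega)) \cap H^1(0,T;L^2(\Omega))$, the family is relatively compact in $C([0,T];L^2(\Omega))$. Passing to a further subsequence, $\wh\phi_{\tau_k} \to \phi$ in $C([0,T];L^2(\Omega))$. To transfer this convergence to the piecewise-constant interpolant, I would invoke Lemma \ref{lemma:timeRegularity-phi}(ii), which gives $\norm{\ol\phi_{\tau_k} - \wh\phi_{\tau_k}}_{L^\infty(0,T;L^2(\Omega))} \le C\sqrt{\tau_k} \to 0$; combined with the strong convergence of $\wh\phi_{\tau_k}$, this yields $\ol\phi_{\tau_k} \to \phi$ in $L^\infty(0,T;L^2(\Omega))$, and by uniqueness of weak limits forces $\tilde\phi = \phi$.

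Finally, non-negativity of $\phi$ and $\rho$ follows from that of the interpolants: the Euler-Lagrange equation \eqref{eq:minMove-phiOptimality} rewrites as a linear Neumann problem with non-negative zero-th order coefficient and non-negative source $\tfrac{1}{\tau}\phi_\tau^{n-1} + \tfrac{1}{\varepsilon^2}\rho_\tau^n$, so the weak maximum principle yields $\phi_\tau^n \ge 0$ inductively from $\phi_\init \ge 0$; non-negativity then passes to the strong limit $\phi$ and to the weak-$\ast$ limit $\rho$ of $\ol\rho_\tau \in \cP_\ac(\Omega)$. The proof is largely a bookkeeping exercise in compactness, and the main technical hinge is the Aubin-Lions-Simon step together with the reconciliation, via Lemma \ref{lemma:timeRegularity-phi}(ii), of the limits of the two interpolants along a common subsequence.
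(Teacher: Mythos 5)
Your proposal is correct and follows essentially the same route as the paper: weak/weak-$\ast$ extractions from the bounds of Lemmas \ref{lemma:timeRegularity-phi} and \ref{lemma:aPioriEstimates-from-Energy} for (i), (ii), (iv), strong compactness of $\wh{\phi}_{\tau}$ in $C([0,T];L^2(\Omega))$ (the paper invokes Arzel\`a--Ascoli using the uniform $C^{0,1/2}$-in-time bound, you invoke Aubin--Lions--Simon, which is the same mechanism), and then Lemma \ref{lemma:timeRegularity-phi}-(ii) to transfer the limit to $\ol{\phi}_{\tau}$ and identify the two limits. Your explicit maximum-principle argument for the nonnegativity of $\phi^n_\tau$ is a valid (and welcome) addition that the paper's written proof leaves implicit.
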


\begin{proof}

Lemma \ref{lemma:aPioriEstimates-from-Energy} immediately gives the convergences (ii) and (iv), while (i) is a consequence of 
Lemma \ref{lemma:timeRegularity-phi}-(i).

	Assertion (iii) follows from Arzel\`{a}-Ascoli. Indeed, Lemma \ref{lemma:timeRegularity-phi}-(i) shows the family $\pseq{\wh{\phi}_{\tau_k}}_k$ is uniformly bounded in $C^{0,1/2}([0,T];L^2(\Omega))$
and Lemma \ref{lemma:aPioriEstimates-from-Energy}-(i) shows that $\pseq{\wh{\phi}_{\tau_k}(t)}_k$ is bounded in $H^1(\Omega)$ (and thus precompact in $L^2(\Omega)$) for each $t \in [0,T]$.
Arzel\`{a}-Ascoli provides strong convergence of a subsequence of $\pseq{\wh{\phi}_{\tau_k}}_k$ in $C([0,T]; L^2(\Omega))$. Along this subsequence, we also have $\ol{\phi}_{\tau_k} \to \phi$ in $L^\infty(0,T; L^2(\Omega))$ as a consequence of Lemma \ref{lemma:timeRegularity-phi}-(ii). 
	
\end{proof}

When $f$ satisfies \ref{hyp:fStronglyConvex}, the fact that the map $\phi \mapsto \rho_\phi$ is Lipschitz $L^2(\Omega) \to L^2(\Omega)$ can be used to show that $\rho = \rho_\phi$. 
This assumption on $f$ is however not necessary as we can show directly that any weak limit $\rho$
must be solution of the minimization problem \eqref{eq:minrhom}:

\begin{proposition}\label{prop:minimizerAtEachTime}
	For $\rho$ obtained as limit of $\pseq{\ol{\rho}_{\tau_k}}_k$   as in Proposition \ref{prop:convergenceOfInterpolants}-(iv), we have:
	\begin{enumerate}[label=(\roman*)]
		\item for a.e.\ $t \in [0,T]$, we have $ \rho(t) =  \rho_{\phi(t)}$ a.e.\ in $\Omega$;
		\item $\rho=\rho_{\phi} \in C([0,T]; L^m(\Omega)\wk)$.
	\end{enumerate}
\end{proposition}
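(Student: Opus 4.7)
The plan is to exploit the pointwise-in-$t$ characterization $\ol\rho_{\tau_k}(t) = \rho_{\ol\phi_{\tau_k}(t)}$ (available by construction of the minimizing movements scheme and Proposition \ref{prop:minMove-existenceUniqueness}-(ii)) as the unique minimizer of $\sK(\cdot\,;\ol\phi_{\tau_k}(t))$, pass to the limit using the lower semicontinuity of $\sK$ together with the strong $L^2$-convergence of $\ol\phi_{\tau_k}(t)$, and invoke uniqueness of minimizers (Proposition \ref{prop:rho}-(i)) to force the limit to be $\rho_{\phi(t)}$. The weak-$\ast$ $L^\infty(0,T;L^m(\Omega))$ convergence from Proposition \ref{prop:convergenceOfInterpolants}-(iv) by itself gives no control on individual time slices, and this pointwise uniqueness argument is precisely what compensates for that.

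First I would establish pointwise-in-$t$ convergence. Fix $t \in [0,T]$. Proposition \ref{prop:convergenceOfInterpolants}-(iii) yields $\ol\phi_{\tau_k}(t) \to \phi(t)$ strongly in $L^2(\Omega)$ (uniformly in $t$), and estimate \eqref{eq:rhophibd} applied to $\ol\rho_{\tau_k}(t) = \rho_{\ol\phi_{\tau_k}(t)}$ gives a uniform bound $\|\ol\rho_{\tau_k}(t)\|_{L^m(\Omega)}^m \leq C+C\|\ol\phi_{\tau_k}(t)\|_{L^2(\Omega)}^2 \leq C$. Thus, up to a (possibly $t$-dependent) subsequence, $\ol\rho_{\tau_k}(t) \rightharpoonup \tilde\rho(t)$ weakly in $L^m(\Omega)$ with $\tilde\rho(t) \in \cP_\ac(\Omega) \cap L^m(\Omega)$ (this set being convex and closed in $L^m$, hence weakly closed). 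For any competitor $\mu \in \cP_\ac(\Omega) \cap L^m(\Omega)$, taking $k\to\infty$ in $\sK(\ol\rho_{\tau_k}(t);\ol\phi_{\tau_k}(t)) \leq \sK(\mu;\ol\phi_{\tau_k}(t))$, using the convexity-based lower semicontinuity of $\rho\mapsto \int_\Omega f(\rho)\,dx$ for weak $L^m$-convergence (cf.\ \ref{hyp:fGrowthCondition}) together with strong $L^2$-convergence of $\ol\phi_{\tau_k}(t)$ in the linear term $\int_\Omega \rho\phi\,dx$, yields $\sK(\tilde\rho(t);\phi(t)) \leq \sK(\mu;\phi(t))$. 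Uniqueness (Proposition \ref{prop:rho}-(i)) then forces $\tilde\rho(t) = \rho_{\phi(t)}$ irrespective of the subsequence, so the whole family satisfies $\ol\rho_{\tau_k}(t) \rightharpoonup \rho_{\phi(t)}$ weakly in $L^m(\Omega)$ for every $t \in [0,T]$.

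Part (i) then follows by identifying time-integrated limits: for any test function $\zeta \in L^1(0,T;L^{m/(m-1)}(\Omega))$, the pointwise limit $\int_\Omega \ol\rho_{\tau_k}(t)\zeta(t,\cdot)\,dx \to \int_\Omega \rho_{\phi(t)}\zeta(t,\cdot)\,dx$ is dominated by $C\|\zeta(t)\|_{L^{m/(m-1)}(\Omega)}$, so dominated convergence gives $\int_0^T\!\!\int_\Omega \ol\rho_{\tau_k}\zeta\,dx\,dt \to \int_0^T\!\!\int_\Omega \rho_{\phi(t)}\zeta\,dx\,dt$, which forces $\rho(t,\cdot) = \rho_{\phi(t)}$ for a.e.\ $t$ by uniqueness of the weak-$\ast$ limit. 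For part (ii), I would rerun the same compactness-plus-uniqueness argument along any sequence $t_n\to t$: the continuity $\phi \in C([0,T];L^2(\Omega))$ from Proposition \ref{prop:convergenceOfInterpolants}-(iii) gives $\phi(t_n) \to \phi(t)$ in $L^2(\Omega)$, and the same reasoning yields $\rho_{\phi(t_n)} \rightharpoonup \rho_{\phi(t)}$ weakly in $L^m(\Omega)$, which is exactly continuity in $C([0,T];L^m(\Omega)\wk)$.

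The main subtlety is the absence of any quantitative stability estimate for the map $\phi \mapsto \rho_\phi$ in the present generality: with \ref{hyp:fStronglyConvex} the Lipschitz bound of Proposition \ref{prop:rhoStability} would give the identification in one line, but here everything must be extracted from uniqueness plus weak compactness. Two minor technical points worth being careful about are (a) the measurability in $t$ of $t\mapsto\rho_{\phi(t)}$, needed to justify dominated convergence above, which follows from the pointwise weak-$L^m$ convergence together with separability of $L^{m/(m-1)}(\Omega)$, and (b) the preservation of the normalization $\int_\Omega \rho\,dx=1$ and of nonnegativity in the weak limit, which are both immediate since weak $L^m$-convergence preserves integrals against $\mathbf{1}_\Omega$ and preserves the positivity cone.
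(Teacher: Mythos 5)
Your proposal is correct and follows essentially the same route as the paper's proof: pointwise-in-$t$ extraction of a weak $L^m(\Omega)$ limit of $\ol\rho_{\tau_k}(t)=\rho_{\ol\phi_{\tau_k}(t)}$, passage to the limit in the minimization inequality using lower semicontinuity of $\int_\Omega f(\rho)\,dx$ and the strong $L^2(\Omega)$ convergence of $\ol\phi_{\tau_k}(t)$, identification via uniqueness of the minimizer, upgrade to the full family, and then matching with the weak-$\ast$ limit $\rho$ in $L^\infty(0,T;L^m(\Omega))$; part (ii) is handled by the same compactness-plus-uniqueness argument along $t_n\to t$, exactly as in the paper. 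Your explicit dominated-convergence and measurability remarks merely flesh out a step the paper states tersely, so there is nothing further to add.
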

As a consequence of (ii), we choose always to work with this continuous representative. 

\begin{proof}
Proposition \ref{prop:minMove-existenceUniqueness} implies that 
for any given $t\in[0,T]$, $\ol \rho_\tau(t)$ is the unique minimizer of $\nu\mapsto \int_\Omega f(\nu)-\nu \ol \phi_\tau(t)\, dx$; that is, 
$$
\int_\Omega f(\ol \rho_\tau(t))-\ol \rho_\tau(t) \ol \phi_\tau(t)\, dx \leq \int_\Omega f(\nu)-\nu \ol \phi_\tau(t)\, dx \quad \forall \nu \in  \cP_\ac(\Omega) \cap L^m(\Omega).
$$
Since $\pseq{\ol \rho_{\tau_k}(t)}_k$ is bounded in $L^m(\Omega)$ by Lemma \ref{lemma:aPioriEstimates-from-Energy}-(ii), it converges weakly, up to a not relabeled subsequence, to some $\mu \in \cP_\ac(\Omega) \cap L^m(\Omega)$ and since $\ol{\phi}_{\tau_k}(t) \to \phi(t)$  strongly  in $L^2(\Omega)$, we can pass to the limit in the inequality above to get
$$
\int_\Omega f(\mu)-\mu  \phi (t)\, dx \leq \liminf_{k\to\infty} \int_\Omega f(\ol \rho_{\tau_k}(t))-\ol \rho_{\tau_k}(t) \ol \phi_{\tau_k}(t)\, dx\leq  \int_\Omega f(\nu)-\nu\phi(t)\, dx \quad \forall \nu \in  \cP_\ac(\Omega) \cap L^m(\Omega).
$$
The uniqueness from Proposition \ref{prop:rho}-(i) thus gives $\mu = \rho_{\phi(t)}$, and by starting this argument with an arbitrary subsequence of $\pseq{\ol{\rho}_{\tau_k}}_k$, it follows that the whole sequence $\pseq{\ol \rho_{\tau_k}(t)}_k$ converges to $\rho_{\phi(t)}$ weakly in $L^m(\Omega)$. Since the $L^m(\Omega)$-bound in Lemma \ref{lemma:aPioriEstimates-from-Energy}-(ii) is uniform-in-$t$, this pointwise convergence in time to $\rho_\phi$ implies the weak-$\ast$ convergence in $L^\infty(0,T;L^m(\Omega))$ to $\rho_\phi$. Since we started with a sequence that converged weakly-$\ast$ to $\rho$, we deduce $\rho = \rho_\phi$.

Finally, since $[0,T] \ni t \mapsto \phi(t) \in L^2(\Omega)$ is strongly continuous, we have for any $t \in [0,T]$ that $\lim_{s \to t} \norm{\phi(s) - \phi(t)}_{L^2(\Omega)} = 0$. The same argument as above can be used to show that  $\rho_{\phi(s)} \rightharpoonup \rho_{\phi(t)}$ weakly in $L^m(\Omega)$ as $s \to t$, which gives (ii).
\end{proof}

We can now prove the first part of Theorem \ref{thm:existence}
\begin{proof}[Proof of Theorem \ref{thm:existence} -- Existence]
The convergences of Proposition \ref{prop:convergenceOfInterpolants} allow us to pass to the limit in \eqref{eq:semidiscretization-parabolicEq} to get
$$\int_0^T\int_\Omega \varepsilon \pd \phi t  \psi + \varepsilon \nabla \phi  \cdot \nabla \psi + \frac{\sigma}{\varepsilon}  \phi  \psi - \frac{1}{\varepsilon} \rho \psi \,dx \,dt 				=  				0
$$
for any $\psi \in L^2(0,T; H^1(\Omega))$, which implies (in some weak sense)
 $$\varepsilon \pd \phi t   - \varepsilon \Delta \phi   = \frac{1}{\varepsilon} (\rho- \sigma\phi  ).
 $$
Since $ \pd\phi t\in L^2(0,T;L^2(\Omega))$, this equality actually implies that  $\lap\phi \in L^2(0,T;L^2(\Omega))$,  and so $ \phi$ is a strong solution. Since $\pseq{\wh\phi_{\tau_k}}_k$ converges strongly in $C([0,T]; L^2(\Omega))$, the limit satisfies the initial condition $\phi(0)=\phi_{in}$.
Proposition \ref{prop:minimizerAtEachTime} gives for a.e.\ $t \in [0,T]$ that $\rho(t) = \rho_{\phi(t)}$, and so $(\rho,\phi)$ is a solution of \eqref{eq:phi}.
\medskip

It remains to show that $\phi$ satisfies the energy inequality \eqref{eq:energy0}.	The discrete energy inequality \eqref{eq:disdis} can be written as
$$
\sG(\ol \rho_\tau(t), \ol \phi_\tau(t) ) + \frac 1 2	\int_0^t  \varepsilon \norm{\pa_t \wh \phi_\tau(s)}_{L^2(\Omega)}^2 \, ds \le \sG(\rho_{\phi_\init}, \phi_\init) 
$$
and the weak convergence of $\pseq{\pa_t \wh \phi_{\tau_k}}_k$ and  the lower semicontinuity of $\sG(\cdot,\cdot)$ w.r.t.\ joint weak $L^m(\Omega)$ convergence and weak $H^1(\Omega)$ convergence then gives 
	\[
	\sG(\rho(t), \phi(t)) +  \frac 1 2	\int_0^t  \varepsilon \norm{\pa_t  \phi(s)}_{L^2(\Omega)}^2 \, ds  \le \sG(\rho_{\phi_\init}, \phi_\init).	
	\]
	Since $\rho(t) = \rho_{\phi(t)}$ for all $t$ 
	(see Proposition \ref{prop:minimizerAtEachTime}), 
	we deduce
		\[
	\sE_\varepsilon(\phi(t))  + \frac{1}{2}\int_0^t \varepsilon \norm{\pd \phi t (s)}_{L^2(\Omega)}^2 \,ds \le \sE_\varepsilon(\phi_\init), 
	\]
	where we recall that $\sE_\eps (\phi)  = \sG(\rho_\phi,\phi).$
\end{proof}

Note that the energy inequality derived here is not optimal due to the  factor of $\frac{1}{2}$, but this is sufficient for our forthcoming aim of deriving volume-preserving mean-curvature flow. This inequality can be improved to its optimal version (without the factor of $\frac{1}{2}$) without much additional effort by using the De Giorgi variational interpolant of $\pseq{\phi_\tau^n}_{n=1}^{N_\tau}$ (see \cite[Section 6]{Mim17}, \cite[Section 4.3]{2015_CarrilloKinderlehrer-etal_2dKS-gradientFlow}
or \cite[Chapter\ 3]{AGS08}).

\subsection{Proof of Theorem \ref{thm:existence}: Uniqueness of solutions}
We now turn to the uniqueness of the solution, which we prove here under the stronger assumption 
 that $f$ satisfies \ref{hyp:fStronglyConvex} ($f$ is $\alpha$-uniformly convex for some $\alpha>0$). 
We recall that under this assumption, Proposition \ref{prop:rhoStability}-(ii) provides that the map $\phi \mapsto \rho_\phi$ is Lipschitz continuous $L^2(\Omega) \to L^2(\Omega)$. This Lipschitz continuity now makes the argument  classical.

Given two solutions $ \phi^1$, $\phi^2$ of  \eqref{eq:phi} with initial data $\phi^1_\init$ and $\phi^2_\init$  we have:
\begin{align*}
	& \frac{1}{2}\fd{} t \norm{\phi^1(t) - \phi^2(t)}_{L^2(\Omega)}^2 +  \norm{\nabla(\phi^1(t) - \phi^2(t))}_{L^2(\Omega)}^2 \\
	&\qquad\qquad  = -\frac{\sigma}{\eps^2}\norm{\phi^1(t) - \phi^2(t)}_{L^2(\Omega)}^2 + \frac{1}{\eps^2}\int_\Omega(\rho_{\phi^1(t)} - \rho_{\phi^2(t)})(\phi^1(t) - \phi^2(t))\,dx \\
	&\qquad \qquad \leq -\frac{\sigma}{\eps^2}\norm{\phi^1(t) - \phi^2(t)}_{L^2(\Omega)}^2  + \frac{1}{\eps^2}\norm{\phi^1(t) - \phi^2(t)}_{L^2(\Omega)}\norm{\rho_{\phi^1}(t) -\rho_{ \phi^2}(t)}_{L^2(\Omega)} \\
	& \qquad\qquad  \leq -\frac{\sigma}{\eps^2}\norm{\phi^1(t) - \phi^2(t)}_{L^2(\Omega)}^2  + \frac{2}{\alpha\eps^2}\norm{\phi^1(t) - \phi^2(t)}_{L^2(\Omega)}^2 
\end{align*}
where the last line follows from the Lipschitz continuity  \eqref{eq:liprho}.
Gronwwall's lemma gives
\[
\norm{\phi^1(t) - \phi^2(t)}_{L^2(\Omega)}^2 \le \norm{\phi^1_\init - \phi^2_\init}^2_{L^2(\Omega)} \exp\left(\frac{4 - 2\alpha\sigma}{\alpha\eps^2}t \right)	\qquad	\forall \, t \in [0,T].
\]
When the initial data are the same $\phi_\init^1 = \phi_\init^2$, we get $\phi^1(t) = \phi^2(t)$ a.e.\ in $\Omega$ for all $t \in [0,T]$. 

\subsection{Proof of Theorem \ref{thm:existence}: $L^\infty$-bound on solutions}
We now complete the proof of Theorem \ref{thm:existence} by showing that when the initial condition is in $L^\infty$, the solution remains in $L^\infty$ for all $t \in [0,T]$.
For this proof, we make the additional assumption (which is compatible with \ref{hyp:fGrowthCondition}) that $u \mapsto f'(u)$ has  at least linear growth for large $u$ and so 
$$(f')^{-1}(v) \leq C_2 v +R_2.$$
Recalling that $\rho_{\phi(t)} = (f')^{-1}((\phi(t) - \ell(t))_+)$, we deduce
$$
0\leq \rho_{\phi(t)} \leq C_2 (\phi(t) - \ell(t))_+ + R_2 \leq C_2 \phi(t) + (-\ell)_+ + R_2
$$
(since $\phi(t)\geq 0$) and the condition $\int_\Omega \rho_{\phi(t)}\, dx =1$ implies $(-\ell)_+ \leq f'\left(\frac{1}{|\Omega|}\right)$.
In particular, $\phi$ solves
$$
\pa_t \phi - \Delta \phi \leq \frac 1 {\eps^2} \left( (C_2-\sigma) \phi(t) + f'\left(\frac{1}{|\Omega|}\right)  + R_2\right)
$$
and a simple barrier argument implies (when $C_2\neq \sigma$)
$$
\|\phi(t)\|_{L^\infty(\Omega)} \leq \|\phi(0)\|_{L^\infty(\Omega)} e^{\frac{C_2-\sigma}{\eps^2} t}
+ \frac{f'\left(\frac{1}{|\Omega|}\right)  + R_2}{C_2-\sigma}\left(e^{\frac{C_2-\sigma}{\eps^2} t}-1\right), 
$$
which gives the result.

\section{\texorpdfstring{$\Gamma$}{\textGamma}-convergence of the energy $\sJ_\varepsilon$ (Proof of Theorem \ref{thm:Gamma})}
\label{sec:Gamma}
From here through the remainder of the paper, we assume that $f$ satisfies \ref{hyp:fSecondBounds} and 
we work with the energy $\sJ_\varepsilon$ defined by \eqref{eq:J0}, which can also be written as
\begin{equation}\label{eq:Jepsbis}
\sJ_\eps(\phi) = \frac 1 \eps  \left\{  \int_\Omega W(\rho_\phi) + \frac 1 {2\sigma}  (\rho_\phi-\sigma \phi )^2\, dx \right\}+\eps  \int_\Omega \frac 1 2 |\na \phi |^2\, dx \qquad \forall \phi \in H^1(\Omega).
\end{equation}
We recall, $\sJ_\varepsilon$ differs from $\sE_\varepsilon$ only by a constant. 
With the double-well potential $W_\sigma$ defined by \eqref{eq:g}, we have \eqref{eq:JW}; that is, 
\begin{equation}\label{eq:lowerbound}
	\sJ_\eps(\phi) \geq  \sF_\eps(\phi ) \qquad \forall \phi \in H^1(\Omega),  
\end{equation}
where
\begin{equation}\label{eq:ModMortF}
	\sF_\eps(\phi ) 
	:=  \frac 1 \eps \int_\Omega W_\sigma(\sigma \phi)  \, dx  +\eps \int_\Omega \frac1  2 |\na \phi|^2\, dx \geq 0.
	%
\end{equation}

This might seem like a very crude bound since replacing 
$ \inf_{\rho\in \cP_\ac(\Omega) \cap L^m(\Omega)} \int  W(\rho) + \frac{1}{2\sigma}(\rho-\sigma\phi)^2\, dx$ by $\int W_\sigma(\sigma \phi)\, dx $ 
completely ignores the constraint $\int_{\Omega} \rho_\phi \, dx =1 $.
However, 
the main theorem of this section, namely the $\Gamma$-convergence of $\sJ_\eps$ to $\sJ_0$, shows that
these two functionals have the same $\Gamma$-limit.
A key aspect of this proof is the fact that the functional $\sF_\eps$
is a classical Modica-Mortola functional. Indeed, we have
(see \cite[Lemma 3.1]{M23}):
\begin{lemma}[double-well potential]\label{lem:gF1}
The function $W_\sigma:\bR\to \bR$ defined by \eqref{eq:g} satisfies
\begin{equation}\label{eq:gpos} 
	W_\sigma(v) \geq 0 , \quad \forall v\in \bR; \qquad   W_\sigma^{-1}(0) =  \{0,\theta\};
\end{equation}
and
\begin{equation}\label{eq:gmin}
	W_\sigma(v)\leq \frac 1 {2\sigma}   \min \left\{ v^2,  (v-\theta)^2\right\}.
\end{equation}
Furthermore, \ref{hyp:fGrowthCondition}   implies there exists $\nu > 0$ such that
\begin{equation}\label{eq:dg2} 
	W_\sigma(v) \geq \nu v^2 \quad\mbox{ for $v$ large enough}.
\end{equation}
\end{lemma}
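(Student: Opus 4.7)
The plan is to prove the three assertions essentially by testing the infimum defining $W_\sigma$ with judicious choices of $u$ and exploiting the double-well structure \ref{hyp:fSecondBounds}.

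First, for \eqref{eq:gpos}, note that \ref{hyp:fSecondBounds} gives $W(u) \geq 0$ for all $u \geq 0$, and the quadratic term $(u-v)^2/(2\sigma)$ is non-negative, so $W_\sigma(v) \geq 0$. To locate the zero set, test with $u = 0$ and $u = \theta$: since $W(0) = W(\theta) = 0$, one gets $W_\sigma(0) \leq 0$ and $W_\sigma(\theta) \leq 0$, hence both equal zero. Conversely, if $W_\sigma(v) = 0$, extract a minimizing sequence $u_n \geq 0$ with $W(u_n) \to 0$ and $(u_n - v)^2 \to 0$; the second forces $u_n \to v$ (so in particular $v \geq 0$), and continuity of $W$ on $[0,\infty)$ yields $W(v) = 0$, whence $v \in \{0,\theta\}$. (For $v<0$, the lower bound $(u-v)^2 \geq v^2$ together with $W \geq 0$ in fact gives $W_\sigma(v) \geq v^2/(2\sigma) > 0$.)

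Second, the upper bound \eqref{eq:gmin} is immediate from the infimum: taking $u = 0$ yields $W_\sigma(v) \leq v^2/(2\sigma)$, and taking $u = \theta$ yields $W_\sigma(v) \leq (v-\theta)^2/(2\sigma)$; the minimum of the two gives the claim.

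The only part requiring any care is the coercivity estimate \eqref{eq:dg2}, which I would prove by a dichotomy on the closeness of $u$ to $v$. Assume $v$ is large. For any competitor $u \geq 0$ in the infimum: either $u \leq v/2$, in which case $(u-v)^2 \geq v^2/4$ so that $W(u) + (u-v)^2/(2\sigma) \geq v^2/(8\sigma)$; or $u \geq v/2$, in which case $u$ is also large and \ref{hyp:fGrowthCondition} gives $f(u) \geq C_1 u^m$ with $m > 2$, so $W(u) = f(u) + au - u^2/(2\sigma) \geq \tfrac{1}{2} C_1 u^m$ for $u$ large enough, and this is at least $c(v/2)^m \geq c' v^2$ since $m > 2$. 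Taking the infimum over the two cases yields $W_\sigma(v) \geq \nu v^2$ for some $\nu > 0$ and all sufficiently large $v$.

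No step is a real obstacle — the whole argument is bookkeeping on a one-dimensional infimum — but the case split in \eqref{eq:dg2} is the only place where the hypothesis $m > 2$ is genuinely used, and it is what guarantees that the linear and quadratic correction terms in $W$ are absorbed by $f$.
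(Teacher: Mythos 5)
Your proof is correct. Note that for this lemma the paper itself gives no argument: it simply cites \cite[Lemma 3.1]{M23}, so there is no in-paper proof to compare against. Your direct argument is the natural self-contained one: nonnegativity and the upper bound \eqref{eq:gmin} come from testing the infimum at $u=0$ and $u=\theta$ together with \ref{hyp:fSecondBounds}; the identification $W_\sigma^{-1}(0)=\{0,\theta\}$ via a minimizing sequence (using $u_n\to v$, continuity of $W$ from \ref{hyp:fMinimalAssumptions}, and the separate observation $W_\sigma(v)\ge v^2/(2\sigma)$ for $v<0$) is complete; and the dichotomy $u\le v/2$ versus $u\ge v/2$ for \eqref{eq:dg2} correctly isolates where \ref{hyp:fGrowthCondition} with $m>2$ is needed to absorb the terms $au-\frac{1}{2\sigma}u^2$ into $C_1u^m$. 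An alternative route, in the spirit of the identity \eqref{eq:leg} used later in the paper, would be to write $W_\sigma(\sigma v)=\frac{\sigma}{2}v^2-f^*(v-a)$ and read off \eqref{eq:gmin} and \eqref{eq:dg2} from growth properties of the Legendre transform $f^*$; your elementary competitor-testing argument avoids any convex-duality machinery and is just as short.
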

The $\Gamma$-convergence of $\sF_\eps$ w.r.t.\ strong $L^1(\Omega)$-convergence toward the perimeter functional is thus a classical result (see  \cite{MM77,M87,S88}) and we will use this below to prove Theorem \ref{thm:Gamma}.

\medskip

\subsection{Proof of the \texorpdfstring{$\bm{\liminf}$}{lim inf} property (Theorem \ref{thm:Gamma}-(i))}\label{sec:liminf}
The proof of the $\liminf$ property (Theorem \ref{thm:Gamma}-(i)) follows from \eqref{eq:lowerbound} and the usual argument for Modica-Mortola's $\Gamma$-convergence of $\sF_\eps$, with a few adjustments to account for the indirect mass constraint on $\phi$.

We now consider a positive sequence $\varepsilon_n \to 0$ and a sequence $\pseq{\phi^{\eps_n}}_n$ that converges to $\phi$ in $L^1(\Omega)$.
If $\liminf_{n\to \infty} \sJ_{\eps_n} (\phi^{\eps_n})=+\infty$, then the result is obviously true, so we assume that \\$\liminf_{n\to \infty} \sJ_{\eps_n} (\phi^{\eps_n})<+\infty$ and consider a (not relabeled) subsequence $\pseq{\phi^{\eps_n}}_n$ that achieves the limit inferior, $\lim_{n\to\infty}  \sJ_{\eps_n} (\phi^{\eps_n}) = \liminf_{n\to \infty} \sJ_{\eps_n} (\phi^{\eps_n})$, and has bounded energy, $\sup_n \sJ_{\eps_n} (\phi^{\eps_n})\leq C < +\infty$.
\medskip

First, we note that \eqref{eq:Jepsbis} implies
\begin{equation}\label{eq:int1}
	\left| \int_{\Omega}\sigma \phi^\varepsilon \, dx -1 \right| \leq   \int_{\Omega} | \sigma \phi^\varepsilon-\rho_{\phi^\varepsilon} | \, dx \leq |\Omega|^{1/2}\left( \int_{\Omega} | \sigma \phi^\varepsilon-\rho_{\phi^\varepsilon} |^2 \, dx\right)^{1/2} \leq C \eps^{1/2} \sJ_\eps(\phi^\varepsilon)^{1/2}.
\end{equation}
Hence, the limit $\phi$ satisfies $\int_\Omega \phi(x)\, dx =1/\sigma$.

\medskip

From the strong $L^1$-convergence, we can further assume (up to another not relabeled subsequence) that $\phi^{\eps_n}\to\phi$ a.e.\ in $\Omega$.
On the one hand, $ \int_\Omega W_\sigma(\sigma\phi^{\eps_n})  \, dx \leq \varepsilon_n \sF_{\varepsilon_n}(\phi^{\varepsilon_n})\le \eps_n  \sJ_{\eps_n} (\phi^{\eps_n}) \le C \varepsilon_n \to 0$. By nonnegativity of $W_\sigma$, we see $W_\sigma(\sigma\phi^{\varepsilon_n}) \to 0$ in $L^1(\Omega)$. On the other hand, continuity of $W_\sigma$ gives $W_\sigma(\sigma\phi^{\varepsilon_n}) \to W_\sigma(\sigma\phi)$ a.e., so combining this with Fatou's lemma and the previous convergence shows $\int_\Omega W_\sigma(\sigma\phi) \,dx = 0$. Nonnegativity of $W_\sigma$ then gives $W_\sigma(\sigma\phi) = 0$ a.e., so we deduce
that $\phi = \frac\theta \sigma \chi_E$ for some set $E\subset \Omega$.

It remains to show that $\phi$ is of bounded variation. Towards this end, we introduce the auxiliary function $F_\sigma$ that satisfies 
\begin{equation}\label{eq:F} 
F_\sigma'(v)  =\frac{1}{\sigma}  \sqrt{2W_\sigma(v)}\, , \quad v\in(0,\theta), \quad F_\sigma(0)=0, 
\end{equation}
which we extend from $[0,\theta)$ to $\bR_+$ by a constant by setting $F_\sigma(v) :=\frac{1}{\sigma} \int_0^{\theta} \sqrt{2W_\sigma(s)}  \, ds = \frac{\gamma}{\sigma}\theta$ for $v\geq \theta$. (The definition of $\gamma$, \eqref{eq:sigma}, implies $\gamma = \frac \sigma \theta F_\sigma(\theta)$).
We point out that \eqref{eq:gmin} implies
\begin{equation}\label{eq:FLipschitz}
F_\sigma'(v)
\leq \frac{1}{\sigma^{3/2}} (\min\{ v, \theta-v\})_+ \le \frac{\theta}{2\sigma^{3/2}}\qquad \forall v\geq 0,
\end{equation}
so $F_\sigma \colon \bR_+ \to \bR_+$ is a bounded Lipschitz function, 
which implies $\pseq{F_\sigma (\sigma \phi^{\eps_n})}_n$ converges strongly in $L^1(\Omega)$ to $F_\sigma(\sigma\phi) = F_\sigma(\theta \chi_{E}) = F_\sigma(\theta )\chi_E =  \gamma \frac{\theta}{\sigma} \chi_E = \gamma\phi$.

We have (using \eqref{eq:F}, Young's inequality,  and \eqref{eq:lowerbound})
\begin{equation}\label{eq:BVbound} 
\begin{aligned}
	\int_\Omega |\na F_\sigma(\sigma\phi^{\eps_n})| \,dx &=  \int_\Omega \sqrt{2 W_\sigma(\sigma \phi^{\eps_n})} |\na \phi^{\eps_n}| \,dx \\
	&\leq  \frac {1} {\eps_n} \int_\Omega W_\sigma(\sigma\phi^{\eps_n})  \, dx  + \eps_n \int_\Omega \frac1  2 |\na \phi^{\eps_n}|^2\, dx
	= \sF_{\varepsilon_n}(\phi^{\varepsilon_n}) \leq 
	\sJ_{\eps_n}(\phi^{\eps_n}).
\end{aligned}
\end{equation}
The lower semicontinuity of the $BV$-seminorm gives
$$
C \ge \liminf_{n\to\infty}  \sJ_{\eps_n} (\phi^{\eps_n}) \geq
\liminf_{n\to\infty}   \int_\Omega |\na F_\sigma(\sigma\phi^{\eps_n})| 
\geq  \int_\Omega |\na (\gamma \phi)| = \sJ_0(\phi)
$$
and concludes the proof of the $\liminf$ property.

\medskip

\subsection{Proof of the \texorpdfstring{$\bm{\limsup}$}{lim sup} property (Theorem \ref{thm:Gamma}-(ii))}\label{sec:limsup}
The proof uses similar arguments as the proof of the corresponding result in \cite[Section 3]{M23}. We provide the details here for the reader's convenience.
The following equality, which is easily derived from \eqref{eq:g} plays an important role in the proof:
\begin{equation}\label{eq:leg}
W_\sigma(\sigma v) = \frac{\sigma}{2}v^2 - f^*(v-a), 
\end{equation}
where $f^*$ denotes the Legendre transform of $f$, defined by 
$$ f^*(v) :=\sup_{u\in \bR} \left( u v - f(u)\right) $$
(with the understanding that $f(u)=+\infty$ if $u<0$).
In particular, when $f$ is given by \eqref{eq:porousMediumf}, we have $f^*(v) = c_m v_+^{\frac m{m-1}}$.
In general, our assumptions on $f$ imply that $f^*$ is a convex function in $C^{1}(\bR) \cap C^2_\mathrm{loc}(\bR\setminus \{0\})$  satisfying ${f^*}'(v)=0$ for $v\leq 0$ and ${f^*}'(v)= (f')^{-1}(v)>0$ for all $v>0$ (see \cite[Section 3]{M23} for details).
In addition, we have the following important property:
\begin{equation}\label{eq:eqg} 
W_\sigma(\sigma v) = W(u) + \frac 1 {2\sigma} (u-\sigma v) \Longleftrightarrow u = {f^*}'(v -a)
\end{equation}
which follows from the fact that $ f^*(v)=  u v - f(u)$ iff $u =  {f^*}'(v)$.

\medskip

If $\sJ_0(\phi)=+\infty$, the construction of the recovery sequence is trivial, so we  can  assume that 
$\phi \in BV(\Omega; \frac{\theta}{\sigma} \})$
and $\int_\Omega \phi\, dx =\frac 1 \sigma$.
The first step is to use the well-known $\Gamma$-convergence of the Modica-Mortola functional \eqref{eq:ModMortF} to $\sJ_0$ 
(see \cite{S88,FT89,Leoni}), which implies the existence 
of a sequence $\seq{\phi^{\eps_n}}_n\subset H^1(\Omega) $ such that $\phi^{\eps_n}\to \phi $ in $L^1(\Omega)$ and 
$$ 
\limsup_{n\to\infty} \sF_{\eps_n}(\phi^{\eps_n}) \leq \sJ_0(\phi).
$$
In the usual construction of this recovery sequence, 
(see for instance \cite{Leoni}), 
one actually constructs a family of sequences $\pseq{\seq{\phi^{\eps_n}_t}_n}_t$ with $t\in[0,1]$ satisfying the conditions above; that is:
\begin{equation}\label{eq:fbhkj}
\phi^{\eps_n}_t\xrightarrow{n \to \infty} \phi  \mbox{ in } L^1(\Omega),\quad \mbox{ and } 
\quad
\limsup_{n \to\infty}  \sF_{\eps_n}(\phi^{\eps_n}_t)  \leq \sJ_0(\phi) \qquad \forall t\in [0,1]
\end{equation}
with $\phi^{\eps_n}_0\leq \phi$,  $\phi^{\eps_n}_1 \geq \phi$ and $t\mapsto\phi^{\eps_n}_t$ continuous in $L^1(\Omega)$.
The classical recovery sequence is then obtained by choosing $t_{\eps_n}$  so that  $\int_\Omega\phi^{\eps_n}_{t_{\eps_n}}\, dx= 1/\sigma$.
For our proof, we will make a slightly different choice of $t_{\eps_n}$:
We claim that one can choose $t_{\eps_n}\in[0,1]$ so that 
\begin{equation}\label{eq:intphi}
 \int_\Omega  \phi^{\eps_n}_{t_{\eps_n}} - W_\sigma'(\sigma \phi^{\eps_n}_{t_{\eps_n}})\,dx = 1/\sigma.
 \end{equation}
Indeed, the function $v\mapsto   v -   W_\sigma'(\sigma v)$ is  nondecreasing 
(since \eqref{eq:leg} implies that it is the derivative of the convex function $v \mapsto \frac 1 \sigma f^*(v-a)$)
and so 
$$ \phi^{\eps_n}_{0} -  W_\sigma'(\sigma \phi^{\eps_n}_{0}) \leq \phi -   W_\sigma'(\sigma \phi) =\phi,  \qquad  \phi^{\eps_n}_{1} -  W_\sigma'(\sigma \phi^{\eps_n}_{1}) \geq \phi  -  W_\sigma'(\sigma \phi) = \phi$$
(here we used the fact that $W_\sigma$ is $C^1$ and has minimums at $0$ and $ \theta $ together with the fact that $\sigma \phi$ only takes value $0$ and $ \theta$).
A continuity argument implies that there exists $t(\varepsilon_n)\in [0,1]$ such that \eqref{eq:intphi} holds.
We then denote $\phi^{\eps_n} :=\phi^{\eps_n}_{t(\varepsilon_n)}$ and 
$$\rho^{\eps_n}(x) :=\sigma (\phi^{\eps_n}(x) - W_\sigma'(\sigma \phi^{\eps_n}(x))) =  {f^*}'(\phi^{\eps_n}-a).$$

The fact that ${f^*}' (v)\geq 0$ 
 and \eqref{eq:intphi} give $\rho^{\eps_n} \in \cP_\ac(\Omega)$
and  \eqref{eq:eqg}  implies $W_\sigma(\sigma \phi^{\eps_n}) = W(\rho^{\eps_n}) + \frac 1 {2\sigma} (\rho^{\eps_n}-\sigma \phi^{\eps_n})$ and so the definition of $\sJ_{\eps_n}$ (see \eqref{eq:J0}) gives
\begin{align*}
\sF_{\varepsilon_n}(\phi^{\eps_n}) & =  
\frac 1 {\eps_n} \int_\Omega W_\sigma(\sigma \phi^{\eps_n})  \, dx  + {\eps_n} \int_\Omega \frac1  2 |\na \phi^{\eps_n}|^2\, dx \\
& = 
 \frac {1} {\eps_n} \int_\Omega W(\rho^{\eps_n})  + \frac 1 {2\sigma}  (\rho^{\eps_n}-\sigma \phi ^{\eps_n})^2\, dx +{\eps_n}  \int_\Omega \frac1  2 |\na \phi^{\eps_n} |^2\, dx 
\\
&  \geq \sJ_{\eps_n}(\phi^{\eps_n}).
  \end{align*} 
$$ \limsup_{n \to \infty}\sJ_{\eps_n}(\phi^{\eps_n} ) \leq  \limsup_{n\to \infty}\sF_{\eps_n}(\phi^{\eps_n})  \leq \sJ_0(\rho),$$
which gives the $\limsup$ property since $\phi^{\eps_n}\to \phi$ in $L^1(\Omega)$.

\medskip

\section{Convergence of the density \texorpdfstring{$\rho^\eps$}{\textrhoᵋ} and chemoattractant \texorpdfstring{$\phi^\eps$}{ϕᵋ}}
\label{sec:conv}
We now turn our attention to Theorem \ref{thm:main}. In this section, we prove the first part of this theorem, namely the strong convergence of $\phi^{\eps_n}$ and $ \rho^{\eps_n}$ to (multiples of) the characteristic function of a set of finite perimeter.
We recall that $\phi^{\eps}$, the solution of \eqref{eq:phi}, satisfies the energy dissipation inequality
\begin{equation}\label{eq:energy}
	\sJ_{\eps}( \phi^{\eps}(t)) + \int_0^t \sD_\eps(s)\,ds\leq \sJ_{\eps}( \phi_{\init}^\varepsilon), \qquad \sD_\eps(t) := \int_\Omega \eps |\pa_t \phi^{\eps}(t,x)|^2 \, dx,
\end{equation}
so well-preparedness of the initial data, i.e., $\sup_{\varepsilon \in (0,\varepsilon_0)} \sJ_\varepsilon(\phi_\init^\varepsilon) \le C < +\infty$, implies that both the energy and the dissipation are uniformly bounded in $t \in [0,T]$ for arbitrarily small $\varepsilon\in (0,\varepsilon_0)$:
\begin{equation} \label{eq:energyDissipationBound}
	\sup_{\varepsilon \in (0,\varepsilon_0)} \sup_{t \in [0,T]} \left\{ \sJ_{\eps}( \phi^{\eps}(t)) + \int_0^t \sD_\eps(s)\,ds \right\} \le C.
\end{equation}

There are two essential consequences of this. First, the definition of $\sJ_\varepsilon$ \eqref{eq:Jepsbis} implies
\begin{equation} \label{eq:L2DistanceBound}
	\int_{\Omega} | \sigma \phi^\varepsilon(t)-\rho_{\phi^\varepsilon(t)} | \, dx \leq |\Omega|^{1/2}\left( \int_{\Omega} | \sigma \phi^\varepsilon(t) - \rho_{\phi^\varepsilon(t)} |^2 \, dx\right)^{1/2} \leq C \eps^{1/2} \sJ_\eps(\phi^\varepsilon(t))^{1/2} \le C \eps^{1/2},
\end{equation}
so $\sigma\phi^\varepsilon$ and $\rho_{\phi^\varepsilon}$ have the same  limit as $\varepsilon \to 0^+$.
In addition, \eqref{eq:lowerbound}-\eqref{eq:ModMortF} gives
\begin{equation} \label{eq:doubleWellPotentialBound}
	\int_\Omega W_\sigma(\sigma \phi^\varepsilon(t)) \,dx \le  \varepsilon\sF_\varepsilon(\phi^\varepsilon(t)) \le \varepsilon\sJ_\varepsilon(\phi^\varepsilon(t)) \le C \varepsilon,
\end{equation}
so the range of any sufficiently strong limit $\sigma\phi$ of $\pseq{\sigma \phi^\varepsilon}_\varepsilon$ must be contained in the wells of $W_\sigma$. That is, $\sigma\phi$ ought to be (a multiple of) a characteristic function.

These estimates will be key to proving the following proposition that establishes Part (i) (phase separation) of Theorem \ref{thm:main}.
\begin{proposition}\label{prop:conv}
	Let $\phi^{\eps_n}$ be as in Theorem \ref{thm:main} and define the auxiliary function 
	$$ \psi^{\varepsilon_n}(t,x) :=F_\sigma(\sigma \phi^{\eps_n}(t,x)),$$
	where $F_\sigma \colon \bR_+ \to \bR_+ $ is defined by \eqref{eq:F}.
	There exists $\phi\in C^{0,s}([0,T]; L^1(\Omega)) \cap  BV((0,T)\times\Omega) \cap L^\infty(0,T ; BV(\Omega))$ for all $s \in [0,\frac{1}{2})$ 
	such that, up to a subsequence, 
	\begin{gather*} 
		\psi^{\varepsilon_n} \xrightarrow{n\to\infty} \gamma \phi \quad \text{ strongly in } C^{0,s}([0,T]; L^1(\Omega)),  \\
		\phi^{\eps_n} \xrightarrow{n\to\infty} \phi, \qquad \rho_{\phi^{\eps_n}} \xrightarrow{n\to\infty} \sigma \phi \quad \mbox{ strongly in $L^\infty([0,T];L^q (\Omega))$}, \quad \forall q\in[1,\frac{d}{d-1})
	\end{gather*}
	with $\gamma  = \frac \sigma \theta F_\sigma(\theta)$. Furthermore, for all $t \in [0,T]$, there exists a set  $E(t)\subset\Omega$ such that $\phi(t)=\frac \theta \sigma\chi_{E(t)}$ and \begin{equation}\label{eq:Et}
		|E(t)| = \frac{1}{\theta},\, \qquad P(E(t),\Omega)<+\infty \qquad \forall t \in [0,T].
	\end{equation}
\end{proposition}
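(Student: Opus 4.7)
\smallskip

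\textbf{Plan of proof.} The strategy is to exploit the Modica–Mortola structure of the problem through the auxiliary variable $\psi^{\varepsilon}$. The energy dissipation inequality \eqref{eq:energy} together with well-preparedness gives the uniform bound \eqref{eq:energyDissipationBound}, from which I will extract spatial compactness of $\psi^\varepsilon$ via a $BV$ bound, and temporal equicontinuity via the dissipation bound on $\partial_t\phi^\varepsilon$. The key identity, obtained by chain rule, is
\begin{equation*}
\partial_t \psi^{\varepsilon} \;=\; F_\sigma'(\sigma\phi^{\varepsilon})\,\sigma\,\partial_t\phi^{\varepsilon} \;=\; \sqrt{2W_\sigma(\sigma\phi^{\varepsilon})}\,\partial_t\phi^{\varepsilon}.
\end{equation*}

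\textbf{Step 1 (compactness of $\psi^\varepsilon$).} Young's inequality applied exactly as in \eqref{eq:BVbound} and combined with \eqref{eq:lowerbound} yields $\int_\Omega|\nabla \psi^\varepsilon(t)|\,dx \le \sJ_\varepsilon(\phi^\varepsilon(t))\le C$, uniformly in $t\in[0,T]$ and $\varepsilon$. Applying Cauchy–Schwarz to the identity above, together with $\int_\Omega W_\sigma(\sigma\phi^\varepsilon(t))\,dx\le C\varepsilon$ (from \eqref{eq:doubleWellPotentialBound}) and the definition of $\sD_\varepsilon$, yields
\begin{equation*}
\int_\Omega|\partial_t \psi^\varepsilon(t)|\,dx \;\le\; \bigl(2C\varepsilon\bigr)^{1/2}\,\bigl(\sD_\varepsilon(t)/\varepsilon\bigr)^{1/2} \;\le\; C\,\sD_\varepsilon(t)^{1/2},
\end{equation*}
so integrating and using Cauchy–Schwarz in time gives $\|\psi^\varepsilon(t)-\psi^\varepsilon(s)\|_{L^1(\Omega)}\le C\sqrt{t-s}$ uniformly. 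Since $F_\sigma$ is bounded (by \eqref{eq:FLipschitz}), $\psi^\varepsilon$ is uniformly bounded in $L^\infty([0,T];L^\infty(\Omega))$ and in $L^\infty([0,T];BV(\Omega))$; Rellich's theorem for $BV$ and Arzel\`a–Ascoli yield a subsequence $\psi^{\varepsilon_n}\to \psi^0$ in $C([0,T];L^1(\Omega))$, and the uniform Hölder-$\tfrac12$ bound passes to the limit and gives convergence in $C^{0,s}$ for every $s<\tfrac12$ by interpolation.

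\textbf{Step 2 (identification of the limit).} From $\int_0^T\!\int_\Omega W_\sigma(\sigma\phi^{\varepsilon_n})\,dx\,dt\le CT\varepsilon_n\to 0$ we may pass to a further subsequence for which $W_\sigma(\sigma\phi^{\varepsilon_n}(t,x))\to 0$ and $\psi^{\varepsilon_n}(t,x)\to\psi^0(t,x)$ pointwise a.e. Since $F_\sigma$ is strictly increasing on $[0,\theta]$ and constant on $[\theta,+\infty)$, and since the only values of $v\ge 0$ with $W_\sigma(v)=0$ are $v\in\{0,\theta\}$, a direct pointwise argument forces $\psi^0(t,x)\in\{0,\gamma\theta/\sigma\}$ a.e. and moreover $\sigma\phi^{\varepsilon_n}(t,x)\to 0$ or to an element of $[\theta,\infty)$ according as $\psi^0=0$ or $\psi^0=\gamma\theta/\sigma$. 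Defining $E(t):=\{x:\psi^0(t,x)=\gamma\theta/\sigma\}$ and $\phi:=(\theta/\sigma)\chi_E$, we obtain $\psi^0=\gamma\phi$; the $BV$ bound on $\psi^0(t)$ inherited from Step 1 gives $P(E(t),\Omega)<+\infty$. The mass constraint comes from \eqref{eq:L2DistanceBound}: integrating gives $\sigma\int_\Omega\phi^\varepsilon(t)\,dx=\int_\Omega\rho_{\phi^\varepsilon(t)}\,dx+O(\varepsilon^{1/2})=1+O(\varepsilon^{1/2})$, so the limit satisfies $\int_\Omega\phi\,dx=1/\sigma$, i.e.\ $|E(t)|=1/\theta$; the fact that this holds for every $t$ (not just a.e.) follows from continuity of $t\mapsto\int_\Omega\phi(t)\,dx$ given by the $C^{0,s}$ regularity.

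\textbf{Step 3 (strong convergence of $\phi^{\varepsilon_n}$ and $\rho_{\phi^{\varepsilon_n}}$).} Set $v^\varepsilon:=F_\sigma^{-1}(\psi^\varepsilon)\in[0,\theta]$, where $F_\sigma^{-1}\colon[0,\gamma\theta/\sigma]\to[0,\theta]$ denotes the (continuous) inverse of the restriction of $F_\sigma$ to $[0,\theta]$. Then $v^\varepsilon=\min(\sigma\phi^\varepsilon,\theta)$, and since $F_\sigma^{-1}$ is continuous and uniformly bounded, $v^{\varepsilon_n}\to F_\sigma^{-1}(\gamma\phi)=\sigma\phi$ in $C([0,T];L^q(\Omega))$ for every $q<\infty$. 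It remains to control $(\sigma\phi^\varepsilon-\theta)_+=\sigma\phi^\varepsilon-v^\varepsilon$: by the coercivity \eqref{eq:dg2}, for $v$ large we have $W_\sigma(v)\ge \nu v^2$, while by the nondegeneracy of the well at $\theta$ we have $W_\sigma(v)\ge c(v-\theta)^2$ on bounded sets; combining these on $\{\sigma\phi^\varepsilon\ge\theta\}$ and using $\int_\Omega W_\sigma(\sigma\phi^\varepsilon(t))\,dx\le C\varepsilon$ yields $\|(\sigma\phi^\varepsilon(t)-\theta)_+\|_{L^2(\Omega)}\le C\varepsilon^{1/2}$ uniformly in $t$. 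Since $d/(d-1)\le 2$, this gives $\phi^{\varepsilon_n}\to\phi$ in $L^\infty([0,T];L^q(\Omega))$ for all $q<d/(d-1)$. The corresponding convergence of $\rho_{\phi^{\varepsilon_n}}$ to $\sigma\phi$ then follows from \eqref{eq:L2DistanceBound}.

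\textbf{Main obstacle.} The delicate point is Step 3: the map $F_\sigma$ is not globally invertible, so the convergence of $\psi^{\varepsilon_n}$ does not directly yield convergence of $\phi^{\varepsilon_n}$. The argument requires a separate quantitative control of the excess $(\sigma\phi^\varepsilon-\theta)_+$, which depends crucially on the super-quadratic growth \eqref{eq:dg2} of $W_\sigma$ at infinity and on the nondegeneracy of the well at $\theta$. The latter follows from the assumption $m>2$ through the $C^2$ structure of $W$, but needs to be checked carefully before invoking the lower quadratic bound on $W_\sigma$.
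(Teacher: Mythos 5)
Your Steps 1--2 follow essentially the paper's route (the same auxiliary function $\psi^\varepsilon=F_\sigma(\sigma\phi^\varepsilon)$, the $BV$-in-space bound \eqref{eq:BVbound}, and the dissipation-based bound on $\partial_t\psi^\varepsilon$; the paper invokes a refined Aubin--Lions lemma where you argue directly via Arzel\`a--Ascoli, which is fine). The genuine problem is in Step~3. There you control the excess $(\sigma\phi^\varepsilon-\theta)_+$ through the claimed lower bound $W_\sigma(v)\ge c\,(v-\theta)^2$ on bounded sets near $\theta$, which you attribute to "nondegeneracy of the well at $\theta$" coming from $m>2$. This is not a consequence of the hypotheses under which the proposition is stated. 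Assumptions \ref{hyp:fMinimalAssumptions}, \ref{hyp:fGrowthCondition}, \ref{hyp:fSecondBounds} only require $W\ge 0$ with $\{W=0\}=\{0,\theta\}$ plus super-quadratic growth at infinity; they do not force $W''(\theta)>0$. If the well at $\theta$ is degenerate (e.g.\ $W(u)\sim A(u-\theta)^4$ near $\theta$, which is compatible with \ref{hyp:fSecondBounds} since $f''(\theta)=1/\sigma$ is allowed), then the inf-convolution satisfies $W_\sigma(\theta+h)\sim A h^4$, and your quadratic bound --- and with it the rate $\|(\sigma\phi^\varepsilon(t)-\theta)_+\|_{L^2}\le C\varepsilon^{1/2}$ --- fails. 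The growth condition \ref{hyp:fGrowthCondition} (the "$m>2$" you cite) governs only the behavior at infinity via \eqref{eq:dg2}, not the local structure at $\theta$; nondegeneracy does hold for the specific power law $f_m$, but the proposition is stated for general $f$.

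The gap is repairable, and the paper's argument shows how to do it without any nondegeneracy: instead of inverting $F_\sigma$ and estimating the excess, the paper compares $F_\sigma(\sigma\phi^{\varepsilon_n})$ with $\gamma\phi^{\varepsilon_n}$ directly, using $|F_\sigma(v)-\tfrac{\gamma}{\sigma}v|^2\le C_\delta W_\sigma(v)$ for $v$ outside a $\delta$-neighborhood $V_\delta$ of $\{0,\theta\}$ (which needs only $W_\sigma>0$ there and \eqref{eq:dg2} at infinity) together with the Lipschitz bound $|F_\sigma(v)-\tfrac{\gamma}{\sigma}v|\le C\delta$ on $V_\delta$; this gives $\|F_\sigma(\sigma\phi^{\varepsilon_n})-\gamma\phi^{\varepsilon_n}\|_{C([0,T];L^2)}\to 0$ with no rate but with the full generality required. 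In your framework the same $\delta$-splitting of $\{\sigma\phi^\varepsilon>\theta\}$ into $\{\theta<\sigma\phi^\varepsilon\le\theta+\delta\}$ and its complement would also salvage the qualitative conclusion. Two smaller points: in Step~2 you deduce $\int_\Omega\phi\,dx=1/\sigma$ from \eqref{eq:L2DistanceBound}, but that uses $\phi^{\varepsilon_n}(t)\to\phi(t)$ in $L^1(\Omega)$, which you only establish in Step~3, so the order should be reversed; and your a.e.-in-$(t,x)$ argument gives the two-valued structure of $\phi(t)$ only for a.e.\ $t$, whereas the statement requires it for every $t\in[0,T]$ --- this needs either the per-$t$ Fatou argument of the paper (the bounds \eqref{eq:doubleWellPotentialBound} and \eqref{eq:L2DistanceBound} hold for every $t$) or a short approximation argument using the $C([0,T];L^1)$ continuity, which you only sketch for the mass.
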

Above, $P(E(t),\Omega)$ denotes the perimeter of the set $E(t)$ in $\Omega$,  defined  by
$$
P(E(t),\Omega) :=\int_\Omega |\nabla \chi_{E(t)}|.
$$

\begin{proof}
		Lipschitz continuity of $F_\sigma$ gives 
	\[
	\abs{\psi^{\varepsilon_n}(t)} \le C  \abs{\sigma\phi^{\varepsilon_n}(t)} \le C \big( \rho_{\phi^{\varepsilon_n}(t)} + \abs{\sigma\phi^{\varepsilon_n}(t) - \rho_{\phi^{\varepsilon_n}(t)}} \big).
	\]
	Using that $\int_\Omega \rho_{\phi^{\varepsilon_n}(t)}(x) \,dx = 1$ for all $t \in [0,T]$ and $n \in \bN$ and the calculation \eqref{eq:L2DistanceBound}, we deduce an  $L^\infty(0,T;L^1(\Omega))$-bound on $\psi^{\varepsilon_n}$.
	
	Next, inequality \eqref{eq:BVbound} gives
	$$ \int_\Omega |\na \psi^{\varepsilon_n}(t) | \,dx \leq  \sJ_{\eps_n}(\phi^{\eps_n}(t))\leq C.
	$$
so $ \psi^{\varepsilon_n}$ is bounded in $L^\infty(0,T;BV(\Omega))$.

	
	
	Furthermore, using the definition of $\psi^{\varepsilon_n}$, Cauchy-Schwarz, the uniform-in-$t$-and-$n$ bound on the energy from the double-well potential \eqref{eq:doubleWellPotentialBound}, and the uniform-in-$n$ bound on the dissipation \eqref{eq:energyDissipationBound}:
	\begin{align*}
		\int_0^T \left(\int_\Omega |\pa_t \psi^{\varepsilon_n}|\, dx\right)^2\,dt 
		&=\int_0^T\left( \int_\Omega \sqrt{2 W_\sigma(\sigma\phi^{\eps_n}) }  |\pa_t \phi^{\eps_n}|\, dx\right)^2\,dt  \\
		&\leq   \int_0^T 
		\int_\Omega \frac{2}{\eps_n}  W_\sigma(\sigma\phi^{\eps_n} )  \, dx  \int_\Omega \eps_n  |\pa_t \phi^{\eps_n}|^2\, dx\,dt \\
		&\leq  2 C \int_0^T 
		\int_\Omega \eps_n  |\pa_t \phi^{\eps_n}|^2\, dx\,dt \\
		&\leq 2C^2.
	\end{align*}
	Thus, $\pa_t \psi^{\varepsilon_n}$ is bounded in $L^2(0,T; L^1(\Omega))$. 
		
Since $BV(\Omega)$ is compactly embedded into $L^q(\Omega)$ for all $q < \frac{d}{d-1}$,
a refined  
	Aubin-Lions lemma \cite[Theorem 1.1]{AubinLions} thus implies  that $\pseq{\psi^{\varepsilon_n}}_n$ is relatively compact in $C^{0,s}([0,T];L^1(\Omega))$ for  $0 \le s < \frac{1}{2}$ and $L^\infty([0,T];L^q(\Omega))$ for  $1 \le q< \frac d{d-1}$.
	Up to a subsequence, we can thus assume that $\psi^{\varepsilon_n}$ converges to $\phi$ strongly in $C([0,T];L^q(\Omega))$. 
	The lower semicontinuity of the $BV(\Omega)$-seminorm (resp. $BV((0,T)\times\Omega)$-seminorm) gives
	$\phi \in L^\infty(0,T; BV(\Omega))$  (resp. $\phi \in BV((0,T)\times\Omega)$).
	
	
	\medskip
	
	Next, we note that the function $\bR_+ \ni v \mapsto F_\sigma(v)-\frac{\gamma}{\sigma} v$ vanishes when $v=0 $ and $v=\theta$ (see the discussion after \eqref{eq:F}), which are also the zeroes of the double-well potential $W_\sigma$.
	So given a sufficiently small $\delta > 0$ and neighborhood $V_\delta :=[0,\delta) \cup (\theta-\frac{\delta}{2},\,\theta + \frac{\delta}{2})$,  
	the continuity of $F_\sigma$ and $W_\sigma$ implies
	$$ 
	| F_\sigma (v) -\frac{\gamma}{\sigma} v|^2 \leq C_\delta W_\sigma(v)\qquad \mbox{ for } v \notin V_\delta
	$$ 
	for some constant $C_\delta$. More precisely, \eqref{eq:dg2} and the fact that $F_\sigma$ is bounded imply this inequality for large $v \notin V_\delta$, and then the continuity together with a compactness argument imply the inequality for small $v \notin V_\delta$.
	Since $F_\sigma$ is Lipschitz, we also have
	$$ | F_\sigma(v)-\frac{\gamma}{\sigma} v | \leq C\delta\qquad \mbox{ for } v\in V_\delta.$$
	We deduce (using \eqref{eq:doubleWellPotentialBound}):
	\begin{align*}
		\int_\Omega   |F_\sigma (\sigma \phi^{\eps_n}(t)) -\gamma \phi^{\eps_n}(t)|^2 \, dx 
		& \leq \int_{\{\phi^{\eps_n}(t)\in V_\delta\}}   C\delta^2 \, dx  + C_\delta \int_{\{\phi^{\eps_n}(t)\notin V_\delta\}}  W_\sigma(\sigma \phi^{\eps_n}(t))\, dx\\
		& \leq C|\Omega|  \delta^2 + C _\delta \eps_n   \sJ_\eps(\phi^{\eps_n}(t))\\
		& \leq C|\Omega|  \delta^2 + C _\delta \eps_n    
	\end{align*}
	hence
	$$\limsup_{n\to\infty} \| F_\sigma(\sigma \phi^{\eps_n} ) -\gamma \phi^{\eps_n}\|_{C([0,T];L^2(\Omega))} \leq C \delta. 
	$$
	Since the left-hand side is independent of $\delta$ and this holds for all $\delta>0$, we must have
	\begin{equation}\label{eq:ghj2}
		\limsup_{n\to\infty} \| F_\sigma(\sigma \phi^{\eps_n} ) -\gamma \phi^{\eps_n}\|_{C([0,T];L^2(\Omega))} =0.
	\end{equation}
	It follows that the strong convergence of $\psi^{\varepsilon_n} :=F_\sigma(\sigma \phi^{\eps_n} )$, 
	implies the same strong convergence of $\phi^{\eps_n}$ to $\phi$. Up to another subsequence, we can thus assume $\phi^{\varepsilon_n} \to \phi$ for each $t\in[0,T]$ a.e.\ in $\Omega$, and then continuity of $W_\sigma$ gives $W_\sigma(\sigma\phi^{\varepsilon_n}(t)) \to W_\sigma(\sigma\phi(t))$ for each $t\in[0,T]$ a.e.\ in $\Omega$, too. 
	Nonnegativity of $W_\sigma$, Fatou's lemma, and \eqref{eq:doubleWellPotentialBound} give for each $t \in[0,T]$
	\[
	\int_\Omega W_\sigma (\sigma \phi(t))\, dx \le \liminf_{n\to\infty} \int_\Omega W_\sigma(\sigma \phi^{\eps_n}(t))\, dx \le \liminf_{n\to\infty} \big[  \varepsilon_n \sJ_{\varepsilon_n}(\phi^{\varepsilon_n}(t)) \big] 
	\le \liminf_{n\to\infty}\big[ C\varepsilon_n \big] = 0.
	\]
	Since $W_\sigma(\sigma\phi(t))$ is nonnegative, we deduce $W_\sigma(\sigma\phi(t)) = 0$ a.e.\ in $\Omega$, and so $\phi(t) = \frac{\theta}{\sigma}\chi_{E(t)}$ for some set $E(t)\subset \Omega$. We already know that $\phi(t) \in BV(\Omega)$, which means that the  set $E(t)$ has finite perimeter.

	Finally, \eqref{eq:L2DistanceBound} implies 
	\[
	\limsup_{n\to\infty} \norm{\sigma\phi^{\varepsilon_n} - \rho_{\phi^{\varepsilon_n}}}_{L^\infty(0,T; L^2(\Omega))}  \le C\limsup_{n \to \infty} \big[ \varepsilon_n \sup_{t \in [0,T]}\sJ_{\varepsilon_n}(\phi^{\varepsilon_n}(t)) \big]^{1/2} \le C \limsup_{n\to\infty} \varepsilon_n^{1/2} = 0.
	\]
	Thus, the strong convergence of $\phi^{\eps_n}$ implies that of $\rho_{\phi^{\eps_n}} $. 
\end{proof}

\section{The Velocity}\label{sec:V}
The next two sections are devoted to the proof of Theorem \ref{thm:main}-(ii), which identifies the evolution equation satisfied by the limits obtained from the phase separation result, Theorem \ref{thm:main}-(i). 
Thus, from now on, we assume that $\pseq{\eps_n}_n \subset (0,\infty)$ is a sequence converging to zero such that  the conclusion of Proposition \ref{prop:conv} and the assumption \eqref{eq:EA} of convergence of the energy hold.

Classically (see \cite[Lemma 2.11]{Laux3} and \cite[Lemma 1]{LM}), this energy convergence assumption  implies that, in the limit $\varepsilon \to 0^+$, there is equality in both Young's inequality $\frac{1}{\varepsilon}W_\sigma(\sigma \phi^\varepsilon) + \frac{\varepsilon}{2}\abs{\nabla\phi^\varepsilon}^2 \ge \abs{\nabla F_\sigma(\sigma\phi^\varepsilon)}$ (see \eqref{eq:L2diff}) and in the inequality $W(\rho_{\phi^\varepsilon}) + \frac{1}{2\sigma}(\rho_{\phi^\varepsilon} - \sigma\phi^\varepsilon)^2 \ge W_\sigma(\sigma\phi^\varepsilon)$ (see \eqref{eq:Wg}). As a consequence, at a.e.\ fixed time, the contributions to the energy $\sF_\varepsilon$ converge weakly-$\ast$ as Radon measures on $\cl\Omega$ to (an equal proportion of) the limiting contributor to the limiting energy $\sJ_0$ (see \eqref{eq:EA1}).
This important fact is made precise in the following lemma:
\begin{lemma}[Equipartition of the energy]\label{lem:cv}
	Under assumption \eqref{eq:EA}, and up to another subsequence, we have, for a.e. $t\in [0,T]$ and any test function $\zeta \in C(\cl\Omega)$,
	\begin{equation}\label{eq:EA1}
		\begin{aligned}
			\frac{\gamma}{2} \int_\Omega \zeta(x) |\na \phi(t,x)|
			& = \lim_{n\to \infty} \frac{1}{2}\int_\Omega | \na F_\sigma( \sigma \phi^{\eps_n}(t,x))|\zeta(x)\, dx \\
			&  = \lim_{n\to \infty} \int_\Omega \frac{\eps_n}{2}|\na \phi^{\eps_n}(t,x)|^2\zeta(x)\, dx \\
			& = \lim_{n\to \infty}\int_\Omega \frac{1}{\eps_n}W_\sigma(\sigma \phi^{\eps_n}(t,x))\zeta(x)\, dx.
		\end{aligned}
	\end{equation}
	Furthermore, we have 
	\begin{equation}\label{eq:L2diff} 
		\lim_{n \to \infty} \Big(\eps_n^{1/2 }|\na \phi^{\eps_n}| - \frac{1}{\eps_n^{1/2}} \sqrt{2 W_\sigma(\sigma \phi^{\eps_n})}\Big) = 0\quad  \mbox{ strongly in } L^2((0,T)\times\Omega)
	\end{equation}
	and 
	\begin{equation}\label{eq:Wg}
		\lim_{n \to \infty} \Big(\frac 1 {\eps_n} W(\rho_{\phi^{\eps_n}}) + \frac 1 {2\sigma\varepsilon_n}  (\rho_{\phi^{\eps_n}}-\sigma \phi ^{\eps_n})^2 - \frac{1}{\eps_n}W_\sigma(\sigma \phi^{\eps_n})\Big) = 0 \quad \mbox{ strongly in } L^1((0,T)\times\Omega).
	\end{equation}
\end{lemma}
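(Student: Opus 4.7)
The plan rests on a chain of two pointwise inequalities. First, the Moreau-Yosida definition \eqref{eq:g} gives the pointwise bound $W(\rho_\phi(x)) + \tfrac{1}{2\sigma}(\rho_\phi(x) - \sigma\phi(x))^2 \geq W_\sigma(\sigma\phi(x))$, so that $\sJ_\eps(\phi) \geq \sF_\eps(\phi)$. Second, setting $a := \sqrt{\eps}\,|\nabla\phi|$ and $b := \eps^{-1/2}\sqrt{2W_\sigma(\sigma\phi)}$, Young's identity $\tfrac{a^2+b^2}{2} = ab + \tfrac{(a-b)^2}{2}$ combined with the chain rule $\nabla F_\sigma(\sigma\phi) = \sqrt{2W_\sigma(\sigma\phi)}\,\nabla\phi$ gives
$$\sF_\eps(\phi) \;=\; \int_\Omega |\nabla F_\sigma(\sigma\phi)|\, dx \;+\; \tfrac{1}{2}\int_\Omega (a-b)^2\, dx.$$
Integrating both inequalities in time yields
$$\int_0^T \sJ_{\eps_n}(\phi^{\eps_n})\, dt \;\geq\; \int_0^T \sF_{\eps_n}(\phi^{\eps_n})\, dt \;\geq\; \int_0^T\!\!\int_\Omega |\nabla F_\sigma(\sigma\phi^{\eps_n})|\, dx\, dt.$$

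By the energy-convergence hypothesis \eqref{eq:EA}, the leftmost quantity converges to $\int_0^T \sJ_0(\phi^0)\, dt$. Proposition \ref{prop:conv} provides $F_\sigma(\sigma\phi^{\eps_n}) \to \gamma\phi^0$ strongly in $L^1((0,T)\times\Omega)$, and the lower semicontinuity of the $BV$-seminorm combined with Fatou's lemma bounds the rightmost liminf below by $\int_0^T \sJ_0(\phi^0)\, dt$. The chain therefore collapses into a single limit, and since the integrands in each gap are nonnegative, this immediately gives \eqref{eq:Wg} as well as, via the Young-gap identity above, \eqref{eq:L2diff} (after passing to a non-relabeled subsequence).

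For the pointwise-in-$t$ equipartition \eqref{eq:EA1}, I would apply the $\Gamma$-$\liminf$ inequality of Theorem \ref{thm:Gamma}-(i) at each fixed $t$ (justified because $\phi^{\eps_n}(t) \to \phi^0(t)$ strongly in $L^1(\Omega)$ by Proposition \ref{prop:conv}) together with a dominated-convergence argument, forcing $\sJ_{\eps_n}(\phi^{\eps_n}(t))$, $\sF_{\eps_n}(\phi^{\eps_n}(t))$ and $\int_\Omega |\nabla F_\sigma(\sigma\phi^{\eps_n}(t))|\, dx$ to all converge to $\sJ_0(\phi^0(t))$ for a.e. $t \in [0,T]$ along a further subsequence. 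At such a good $t$, strict convergence in $BV(\Omega)$ (strong $L^1$ together with convergence of the $BV$-seminorm) gives $|\nabla F_\sigma(\sigma\phi^{\eps_n}(t))|\mathcal{L}^d \rightharpoonup^\ast \gamma|\nabla\phi^0(t)|$ weakly-$\ast$ as Radon measures on $\cl\Omega$. Writing $\mu_n^1 := \tfrac{\eps_n}{2}|\nabla\phi^{\eps_n}|^2 \mathcal{L}^d$ and $\mu_n^2 := \tfrac{1}{\eps_n}W_\sigma(\sigma\phi^{\eps_n})\mathcal{L}^d$, the vanishing of the Young gap $\tfrac{(a_n-b_n)^2}{2}$ in $L^1(\Omega)$ at such a $t$ gives $\mu_n^1 + \mu_n^2 \rightharpoonup^\ast \gamma|\nabla\phi^0(t)|$, while Cauchy-Schwarz gives
$$\left|\int_\Omega \zeta\, d(\mu_n^1 - \mu_n^2)\right| = \tfrac{1}{2}\left|\int_\Omega \zeta(a_n-b_n)(a_n+b_n)\, dx\right| \leq \tfrac{\|\zeta\|_\infty}{2}\,\|a_n-b_n\|_{L^2(\Omega)}\,\|a_n+b_n\|_{L^2(\Omega)},$$
and the bound $\|a_n+b_n\|_{L^2(\Omega)}^2 \leq 4\sF_{\eps_n}(\phi^{\eps_n}(t))$ together with $\|a_n-b_n\|_{L^2(\Omega)} \to 0$ forces $\mu_n^1 - \mu_n^2 \rightharpoonup^\ast 0$. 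Adding and subtracting yields $\mu_n^1, \mu_n^2 \rightharpoonup^\ast \tfrac{\gamma}{2}|\nabla\phi^0(t)|$, which is precisely \eqref{eq:EA1}.

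The main obstacle lies in the passage from the integrated to the pointwise-in-$t$ statement: one must arrange the Fatou/dominated-convergence bookkeeping so that a single subsequence delivers simultaneous pointwise convergence of all three integral quantities, and then verify that the resulting strict $BV$-convergence at each good $t$ is enough to upgrade the limits to weak-$\ast$ equipartition against arbitrary continuous test functions $\zeta$.
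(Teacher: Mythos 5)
Your proposal is correct and follows essentially the same route as the paper: the chain $\sJ_{\eps}\ge\sF_{\eps}\ge\int_\Omega|\na F_\sigma(\sigma\phi^{\eps})|\,dx$ with the Young-gap and $W\ge W_\sigma$ identities, the upgrade of \eqref{eq:EA} to a.e.-in-$t$ convergence via the $\Gamma$-$\liminf$ at fixed $t$ plus dominated convergence, strict $BV$ convergence to get the first limit in \eqref{eq:EA1}, and a Cauchy--Schwarz difference-of-squares argument to equidistribute the remaining two terms. The only (immaterial) differences are the order of the steps and that you split $\mu_n^1\pm\mu_n^2$ at each good $t$, whereas the paper shows $u_n-v_n\to0$ in space-time norm before localizing in $t$.
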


\begin{proof}
	
	We first upgrade the convergence of the energy \eqref{eq:EA} to $L^1(0,T)$ and a.e. convergence. We write: 
	\[
	\abs{\sJ_{\eps_n}(\phi^{\eps_n}(t)) - \sJ_0(\phi(t))} = 2(\sJ_0(\phi(t)) -\sJ_{\eps_n}(\phi^{\eps_n}(t)) )_+ + \big[\sJ_{\eps_n}(\phi^{\eps_n}(t)) - \sJ_0(\phi(t))\big].
	\]
	After an integration over $t \in (0,T)$, the second term on the right-hand side vanishes as $n \to \infty$ by the energy convergence assumption \eqref{eq:EA}, so we only need to address the first term. Since $\phi^{\eps_n}\to \phi$ strongly in $C([0,T]; L^1(\Omega))$,  we have that $\phi^{\eps_n}(t) \to \phi(t)$ in $L^1(\Omega)$ for each $t \in [0,T]$.
	Then, the $\liminf$ property of $\Gamma$-convergence (Theorem \ref{thm:Gamma}-(i)) implies that 
	$\lim_{n\to\infty} (\sJ_0(\phi(t)) -\sJ_{\eps_n}(\phi^{\eps_n}(t)) )_+ =0$ for each $t \in [0,T]$. Since $\sJ_{\varepsilon_n}$ is nonnegative, we have for each $t$ the upper bound $(\sJ_0(\phi(t)) -\sJ_{\eps_n}(\phi^{\eps_n}(t)) )_+ \le \sJ_0(\phi(t))$.
	Thus, Lebesgue dominated convergence gives $(\sJ_0(\phi(\cdot)) -\sJ_{\eps_n}(\phi^{\eps_n}(\cdot)) )_+ \to 0$ in $L^1(0,T)$ and we deduce
	$$ \lim_{n\to\infty} \int_0^T \abs{ \sJ_{\eps_n}(\phi^{\eps_n}(t)) - \sJ_0(\phi(t))}\, dt =0.$$
	Up to another subsequence, we can also assume that 
	\begin{equation}\label{eq:EAt}
		\lim_{n\to\infty}\sJ_{\eps_n}(\phi^{\eps_n}(t)) = \sJ_0(\phi(t)) \mbox{ for a.e.\ } t\in[0,T].
	\end{equation}
	
	We now address the first limit in \eqref{eq:EA1}, which is a classical result associated to Modica-Mortola functionals, like $\sF_\varepsilon$. Recall for each $t \in [0,T]$ that $F_\sigma(\sigma \phi^{\eps_n}(t,\cdot)) \to \gamma \phi(t,\cdot)$ in $L^1(\Omega)$ since $ F_\sigma $ is Lipschitz, \eqref{eq:FLipschitz}. Thus, the lower semicontinuity of the $BV(\Omega)$-seminorm,	\eqref{eq:BVbound} and \eqref{eq:EAt} imply, for a.e.\ $t \in [0,T]$, that 
	\begin{align*}
		\sJ_0(\phi(t)) \leq  \liminf_{n\to\infty}  \int_\Omega  | \na F_\sigma(\sigma \phi^{\eps_n}(t,x))|\, dx &\leq  \limsup_{n\to\infty}  \int_\Omega  | \na F_\sigma(\sigma \phi^{\eps_n}(t,x))|\, dx \\
		&\leq   \lim_{n\to\infty} \sJ_{\eps_n}(\phi^{\eps_n}(t)) = \sJ_0(\phi(t)).
	\end{align*}
	We thus have equality in these inequalities, in particular:
	\begin{equation}\label{eq:lim1}
		\lim_{n\to\infty}\int_\Omega  | \na F_\sigma(\sigma  \phi^{\eps_n}(t,x))|\, dx = \sJ_0(\phi(t))  :=\int_\Omega |\na (\gamma \phi(t,x))| \text{ for a.e.\ } t \in [0,T].
	\end{equation}
This convergence of the total variation 
implies the first limit of \eqref{eq:EA1} (see \cite[Proposition 3.15]{AFP_BV_00}). 
	
	In fact, \eqref{eq:lim1} and Lebesgue dominated convergence imply $L^1(0,T)$-convergence of the variations to $\sJ_0(\phi(\cdot))$. Since we have already shown $\lim_{n\to\infty}\sJ_{\varepsilon_n}(\phi^{\varepsilon_n}(\cdot)) = \sJ_0(\phi(\cdot))$ for a.e.\ $t \in [0,T]$ and in $L^1(0,T)$, we further see that the deviation $z^{\varepsilon}$ between $\sJ_{\varepsilon}(\phi^\varepsilon(\cdot))$ and the optimal lower bound on the Modica-Mortola functional $\sF_\varepsilon(\phi^\varepsilon(\cdot))$ vanishes in the limit:
	\begin{equation}\label{eq:zeps}
		\begin{gathered}
			z^{\eps_n}(t) :=\sJ_{\eps_n}(\phi^{\eps_n}(t)) - \int_\Omega  | \na F_\sigma(\sigma \phi^{\eps_n}(t,x))|\, dx \ge  \sF_{\eps_n}(\phi^{\eps_n}(t)) - \int_\Omega  | \na F_\sigma(\sigma \phi^{\eps_n}(t,x))|\, dx \geq 0, \\
			\lim_{n\to\infty} z^{\eps_n} = 0 \qquad \mbox{ a.e.\ } t \in [0,T] \text{ and strongly in } L^1(0,T).
		\end{gathered}
	\end{equation}

Next, we denote 
	\[
	u_n :=\frac{\eps_n}{2}|\na \phi^{\eps_n}|^2,	\quad 
	v_n :=\frac{1}{\eps_n}  W_\sigma(\sigma \phi^{\eps_n}), \quad 
	w_n :=\frac 1 {\eps_n} W(\rho_{\phi^{\eps_n}}) + \frac 1 {2\sigma\varepsilon_n}  (\rho_{\phi^{\eps_n}}-\sigma \phi ^{\eps_n})^2.
	\]
	In this notation, the energies take the form $\sF_{\varepsilon_n}(\phi^{\varepsilon_n}) = \int_\Omega v_n + u_n \,dx$ and $\sJ_{\varepsilon_n}(\phi^{\varepsilon_n}) = \int_\Omega w_n + u_n \,dx$. Furthermore, we recall that the fact that  $F_\sigma '(s) = \frac 1 \sigma \sqrt{2W_\sigma (s)}$ and 
	Young's inequality imply
	\begin{equation}\label{eq:young1}
	| \na F_\sigma(\sigma \phi^{\eps_n})| =  2 \sqrt{u_n v_n} \leq u_n+v_n
	\end{equation}
and since $\sF_\varepsilon \le \sJ_\varepsilon$, we see
	\begin{align*}
		0 \le \int_\Omega u_n(t) +  v_n(t)  -   | \na F_\sigma(\sigma \phi^{\eps_n}(t))|\, dx &= \sF_{\varepsilon_n}(\phi^{\varepsilon_n}(t)) - \int_\Omega | \na F_\sigma(\sigma \phi^{\eps_n}(t))|\, dx \leq z^{\eps_n}(t).
	\end{align*}
	So \eqref{eq:zeps} implies
	\begin{equation}\label{eq:lim2}
		\lim_{n\to\infty} \big(u_n +  v_n  -   | \na F_\sigma(\sigma \phi^{\eps_n})|\big) = 0\qquad \mbox{ strongly in } L^1((0,T)\times \Omega),
	\end{equation}
	which is asymptotic equality in Young's inequality \eqref{eq:young1}
	
	To be more precise, we can write
	\begin{equation}\label{eq:bjh}
		| \na F_\sigma(\sigma \phi^{\eps_n})|  = 2 \sqrt{u_n v_n} = u_n +  v_n -  (\sqrt{u_n}-\sqrt{v_n})^2=   u_n +w_n -   (\sqrt{u_n}-\sqrt{v_n})^2 - |w_n- v_n|,
	\end{equation}
where the last equality follows by noticing that the definition of $W_\sigma$ \eqref{eq:g} implies $w_n - v_n\geq 0$ pointwise. 
Rearranging \eqref{eq:bjh} gives 
	\begin{align*}
		0 \le \int _\Omega   \big(\sqrt{u_n(t)}-\sqrt{v_n(t)}\big)^2 + \abs{w_n(t)- v_n(t)}\, dx = \int_\Omega u_n(t) + w_n(t) - \abs{\nabla F_\sigma(\sigma\phi^{\varepsilon_n}(t))} \,dx  = z^{\eps_n}(t).
	\end{align*}
	Thus, 	
	\begin{equation}\label{eq:lim3}
		\int_\Omega (\sqrt{u_n(t)}-\sqrt{v_n(t)})^2 \, dx\leq z^{\eps_n}(t), \qquad\quad  \int_\Omega | w_n(t)-   v_n(t)|\, dx \leq z^{\eps_n}(t), 
	\end{equation}
	and so the combination of \eqref{eq:zeps} with the first inequality in \eqref{eq:lim3} gives \eqref{eq:L2diff} while \eqref{eq:zeps} and the second inequality in \eqref{eq:lim3} give \eqref{eq:Wg}. 
	
	It remains to show the last two limits in \eqref{eq:EA1}. Towards this end, the first inequality in \eqref{eq:lim3} shows the two energy densities in $\sF_\varepsilon$ are also equal as $\varepsilon \to 0^+$. Indeed, by introducing a difference of squares
	\begin{align*}
		\norm{u_n(t) - v_n(t)}_{L^2(\Omega)} \le \norm{\sqrt{u_n(t)} + \sqrt{v_n(t)}}_{L^2(\Omega)} \norm{\sqrt{u_n(t)} - \sqrt{v_n(t)}}_{L^2(\Omega)} 
		\le C \sqrt{\sF_{\varepsilon_n}(\phi^{\varepsilon_n}(t))} \sqrt{z^{\varepsilon_n}(t)}, 
	\end{align*}
	and since $\sF_{\varepsilon_n}(\phi^{\varepsilon_n}(t))$ is  bounded uniformly in $t \in [0,T]$ and $n\in\bN$, we see \eqref{eq:zeps} gives
	\[
	\lim_{n\to\infty} \big(u_n - v_n \big) = 0 \qquad \text{strongly in } L^2((0,T) \times \Omega).
	\]
	By combining this with \eqref{eq:lim2}, we obtain 
	\begin{equation} \label{eq:equipartitionModicaMortola}
	\lim_{n\to\infty} \big( u_n - \frac{1}{2} \abs{\nabla F_\sigma(\sigma\phi^{\varepsilon_n})}\big) = \lim_{n\to\infty} \big( v_n - \frac{1}{2} \abs{\nabla F_\sigma(\sigma\phi^{\varepsilon_n})}\big) = 0 \qquad \text{strongly in } L^1((0,T) \times \Omega).
	\end{equation}
	Thus, up to another subsequence, the above limits are for a.e.\ $t \in [0,T]$ strongly in $L^1(\Omega)$. Combining this with the first limit in \eqref{eq:EA1} provides the last two limits in \eqref{eq:EA1}.
\end{proof}

The existence of the normal velocity $V = V(t,x)$ now follows from the following lemma. This establishes the first part of Theorem \ref{thm:main}-(ii).
\begin{lemma}[Existence of the normal velocity $V$]\label{lem:vel}
	There exists a constant $C = C(\sup_{n \in \bN}\sJ_{\varepsilon_n}(\phi^{\varepsilon_n}_\init)) > 0$ such that, under the energy convergence assumption \eqref{eq:EA},  for all test functions $\zeta \in C^1_c((0,T) \times \Omega)$, we have
	$$ \left| \int_0^T\int_\Omega \phi(t,x) \pa_t\zeta(t,x)\, dx\, dt\right|\leq C\left(\int_0^T \int_\Omega \zeta(t,x)^2 |\na \phi| \, dt \right)^{1/2}.
	$$
	In particular, there exists $V\in L^2((0,T) \times \Omega, |\na\phi|\,dt)$ such that $  \pa_t \phi = V |\na \phi|\, dt$ as Radon measures on $[0,T] \times \cl\Omega$ and thus in the distributional sense of \eqref{eqLvelweak}. Moreover, for any $\zeta \in C^1_c((0,T) \times \Omega)$
	\begin{equation} \label{eq:convergenceOfVelocity}
		\lim_{n\to\infty} \int_0^T \int_\Omega \partial_t \psi^{\varepsilon_n} \zeta \,dx \,dt = \int_0^T \int_\Omega \gamma\zeta V\abs{\nabla\phi}\,dt.
	\end{equation}
\end{lemma}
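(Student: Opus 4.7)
The plan rests on the chain-rule identity
\begin{equation*}
\partial_t\psi^{\varepsilon_n} = F_\sigma'(\sigma\phi^{\varepsilon_n})\,\sigma\,\partial_t\phi^{\varepsilon_n} = \sqrt{2W_\sigma(\sigma\phi^{\varepsilon_n})}\,\partial_t\phi^{\varepsilon_n},
\end{equation*}
which follows from \eqref{eq:F}, combined with a Cauchy--Schwarz splitting that isolates the double-well density (which will be handled by the equipartition Lemma \ref{lem:cv}) from $\partial_t\phi^{\varepsilon_n}$ (which is handled by the dissipation \eqref{eq:energy}). First I integrate by parts in time against a test function $\zeta\in C^1_c((0,T)\times\Omega)$ and apply Cauchy--Schwarz to get
\begin{equation*}
\left|\int_0^T\!\!\int_\Omega \psi^{\varepsilon_n}\partial_t\zeta\,dx\,dt\right| = \left|\int_0^T\!\!\int_\Omega \sqrt{2W_\sigma(\sigma\phi^{\varepsilon_n})}\,\zeta\,\partial_t\phi^{\varepsilon_n}\right| \le A_n^{1/2} B_n^{1/2},
\end{equation*}
where $A_n := \int_0^T\!\int_\Omega (2W_\sigma(\sigma\phi^{\varepsilon_n})/\varepsilon_n)\zeta^2\,dx\,dt$ and $B_n := \int_0^T\!\int_\Omega \varepsilon_n|\partial_t\phi^{\varepsilon_n}|^2\,dx\,dt$. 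The dissipation bound \eqref{eq:energyDissipationBound} yields $B_n \le \sJ_{\varepsilon_n}(\phi_\init^{\varepsilon_n}) \le C$ uniformly in $n$.

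Next I pass to the limit $n\to\infty$. On the left, Proposition \ref{prop:conv} gives $\psi^{\varepsilon_n}\to\gamma\phi$ strongly in $C([0,T];L^1(\Omega))$, so the left-hand side converges to $\gamma\bigl|\int_0^T\!\int_\Omega\phi\,\partial_t\zeta\bigr|$. For $A_n$, applying the third equality of \eqref{eq:EA1} at each good time $t$ with the continuous test function $\zeta(t,\cdot)^2\in C(\cl\Omega)$ gives the pointwise-in-$t$ convergence $\int_\Omega (W_\sigma(\sigma\phi^{\varepsilon_n}(t))/\varepsilon_n)\zeta(t,\cdot)^2\,dx \to \frac{\gamma}{2}\int_\Omega \zeta(t,\cdot)^2|\nabla\phi(t)|$. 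The well-preparedness bound $\int_\Omega W_\sigma(\sigma\phi^{\varepsilon_n}(t))/\varepsilon_n\,dx \le \sF_{\varepsilon_n}(\phi^{\varepsilon_n}(t)) \le \sJ_{\varepsilon_n}(\phi_\init^{\varepsilon_n}) \le C$ then furnishes the uniform-in-$(t,n)$ majorant $C\|\zeta\|_\infty^2$, so Lebesgue dominated convergence in $t$ gives $\lim_n A_n = \gamma\int_0^T\!\int_\Omega\zeta^2|\nabla\phi|\,dt$. Combining these two limits yields the required estimate, with a constant $C$ depending only on $\gamma$ and $\sup_n \sJ_{\varepsilon_n}(\phi_\init^{\varepsilon_n})$.

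To extract $V$, the estimate says that the linear functional $\zeta\mapsto -\int_0^T\!\int_\Omega\phi\,\partial_t\zeta\,dx\,dt$, a priori defined on $C^1_c((0,T)\times\Omega)$, is continuous for the $L^2((0,T)\times\Omega;|\nabla\phi|\,dt)$ norm; since $C^1_c$ is dense in this space (the measure $|\nabla\phi|\,dt$ being Radon), the functional extends uniquely and Riesz representation produces $V\in L^2((0,T)\times\Omega;|\nabla\phi|\,dt)$ realizing it. This is precisely the identity $\partial_t\phi = V|\nabla\phi|\,dt$ in the distributional sense \eqref{eqLvelweak} and hence as Radon measures. Finally, \eqref{eq:convergenceOfVelocity} drops out at once from integration by parts in $t$ and the strong convergence $\psi^{\varepsilon_n}\to\gamma\phi$ in $L^1((0,T)\times\Omega)$:
\begin{equation*}
\int_0^T\!\!\int_\Omega \partial_t\psi^{\varepsilon_n}\zeta\,dx\,dt = -\int_0^T\!\!\int_\Omega \psi^{\varepsilon_n}\partial_t\zeta\,dx\,dt \xrightarrow{n\to\infty} -\gamma\int_0^T\!\!\int_\Omega \phi\,\partial_t\zeta = \gamma\int_0^T\!\!\int_\Omega V\zeta|\nabla\phi|\,dt.
\end{equation*}
The main technical point I expect is the dominated convergence step that upgrades the pointwise-in-$t$ equipartition of Lemma \ref{lem:cv} to integrated convergence against $\zeta^2$; everything else is a standard Cauchy--Schwarz splitting followed by Riesz representation once the key inequality is in hand.
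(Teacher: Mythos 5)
Your proposal is correct and follows essentially the same route as the paper: the chain-rule identity $\partial_t\psi^{\varepsilon_n}=\sqrt{2W_\sigma(\sigma\phi^{\varepsilon_n})}\,\partial_t\phi^{\varepsilon_n}$, Cauchy--Schwarz against the dissipation bound, the third equality of \eqref{eq:EA1} upgraded by dominated convergence in $t$, and Riesz representation in $L^2((0,T)\times\Omega;|\nabla\phi|\,dt)$, with \eqref{eq:convergenceOfVelocity} obtained by integrating by parts and using $\psi^{\varepsilon_n}\to\gamma\phi$ in $L^1$. The only difference is that you spell out the dominated-convergence and density details that the paper leaves implicit.
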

\begin{proof}
	Recalling the definition $\psi^{\varepsilon_n}(t,x):=F_\sigma(\sigma\phi^{\eps_n}(t,x))$, we write (using the bound on the  dissipation \eqref{eq:energy}):
	\begin{align*}
		\left|\int_0^T\int_\Omega \psi^{\varepsilon_n} \pa_t\zeta\, dx\, dt\right|
		& = \left| \int_0^T\int_\Omega \sqrt{2W_\sigma(\sigma \phi^{\eps_n})}\pa_t\phi^{\eps_n} \zeta\, dx\, dt\right|\\	
		& \leq  \left(\int_0^T\int_\Omega \frac 1 {\eps_n} 2W_\sigma(\sigma\phi^{\eps_n}) \zeta^2 \, dx\,dt \right)^{1/2} \left( \int_0^T\int_\Omega \eps _n |\pa_t\phi^{\eps_n} |^2 \, dx\, dt\right)^{1/2}\\
		& \leq C  \left(\int_0^T\int_\Omega \frac 1 {\eps_n} 2W_\sigma(\sigma \phi^{\eps_n}) \zeta^2 \, dx\,dt \right)^{1/2}.	
	\end{align*}
Since $\psi^{\varepsilon_n}$ converges (strongly in $L^1((0,T)\times\Omega)$) to $\gamma \phi$,
and using the third equality in \eqref{eq:EA1} (together with Lebesgue dominated convergence theorem), we deduce
	\begin{equation}
		\left|\gamma \int_0^T\int_\Omega \phi \,\pa_t \zeta\, dx\, dt\right|
		\leq C \left(\gamma\int_0^T\int_\Omega   \zeta^2 |\na \phi| \,dt \right)^{1/2}.
	\end{equation}
	Thus, the measure $ \pd \phi t$ is a bounded linear functional on $L^2((0,T)\times\Omega, \abs{\nabla\phi}dt)$ and can be identified with a unique $V \in L^2((0,T) \times \Omega, \abs{\nabla \phi}dt)$ satisfying \eqref{eqLvelweak}.
	
	To prove \eqref{eq:convergenceOfVelocity}, we write:
	\[
	\lim_{n\to\infty} \int_0^T \int_\Omega \partial_t \psi^{\varepsilon_n} \zeta \,dx \,dt = -\lim_{n\to\infty} \int_0^T \int_\Omega \psi^{\varepsilon_n} \partial_t \zeta \,dx \,dt = -\int_0^T \int_\Omega \gamma \phi \partial_t \zeta \,dx\,dt = \int_0^T \int_\Omega \gamma \zeta \partial_t\phi
	\]
	and the result follows.
\end{proof}


\section{Volume-Preserving Mean-Curvature Flow}\label{sec:mc}
We now complete the proof of Theorem \ref{thm:main}-(ii) by establishing
\eqref{eq:Vxi}, which is the weak formulation of \eqref{eq:V}.
Many aspects of this derivation are classical, but we recall that unlike the usual Allen-Cahn equation, the right-hand side of \eqref{eq:phi} is nonlocal and the limiting equation preserves the volume.   
The proof of Proposition \ref{prop:LM} (existence of a Lagrange multiplier) is thus the most original part of this section.
\medskip

In order to derive \eqref{eq:Vxi}, we consider a vector field 
$\xi \in [C^1([0,T]\times\overline \Omega)]^d$ satisfying $\xi\cdot n=0$ on $\pa\Omega$, and (at a fixed but suppressed time $t \in [0,T]$) we multiply the $\phi$ equation \eqref{eq:phi} by $\eps \na \phi^\eps \cdot \xi$ to get:
$$
\int_\Omega  \eps \pa_t\phi^\eps \na \phi^\eps \cdot \xi\, dx
-  \int_\Omega  \eps \Delta \phi^\eps \na \phi^\eps \cdot \xi\, dx 
=  \int_\Omega  \frac{1}{\eps}(\rho_{\phi^\eps} - \sigma\phi^\eps) \na \phi^\eps \cdot \xi\, dx. 
$$
We note that the second term can be written (after a couple of integration by parts) as
$$
- \int_\Omega  \eps \Delta \phi^\eps \na \phi^\eps \cdot \xi\, dx 
=  \int_\Omega  \eps \na \phi^\eps\otimes \na \phi^\eps:D\xi\, dx 
-  \int_\Omega   \frac\eps 2  |\na \phi^\eps |^2 \div \xi\, dx
$$
and we rewrite the right-hand side as
$$
\int_\Omega  \frac{1}{\eps}(\rho_{\phi^\eps} - \sigma\phi^\eps) \na \phi^\eps \cdot \xi\, dx  
=
\frac{1}{\eps}  \int_\Omega  (\rho_{\phi^\eps} - \sigma\phi^\eps) \na \phi^\eps \cdot \xi - W_\sigma(\sigma\phi^\eps)\div\xi \, dx  
+  \frac{1}{\eps}  \int_\Omega W_\sigma (\sigma \phi^\eps) \div  \xi\, dx.  
$$
This rewriting is natural from the point-of-view of the classical Allen-Cahn equation. The first term on the right-hand side measures (in a weak sense) the deviation between our nonlocal reaction term in \eqref{eq:phi} and the standard local reaction term. The second term is what would arise from the standard local reaction term, and from Proposition \ref{lem:cv}, we already know at a.e.\ fixed $t\in[0,T]$ that  $\pseq{W_\sigma(\sigma\phi^\varepsilon(t))}_\varepsilon$ converges weakly-$\ast$ as Radon measures on $\cl\Omega$ to one-half of the limiting total variation. 

Putting these equalities together, we deduce
\begin{equation}\label{eq:eqre}
	\begin{aligned}
		\int_\Omega  \eps \pa_t\phi^\eps \na \phi^\eps \cdot \xi\, dx 
		& = - \int_\Omega  \eps \na \phi^\eps\otimes \na \phi^\eps:D\xi\, dx \\
		& \quad + \int_\Omega   \frac\eps 2  |\na \phi^\eps |^2 \div \xi\, dx  \\
		& \quad +  \frac{1}{\eps} \int_\Omega W_\sigma(\sigma\phi^\eps) \div  \xi\, dx  \\
		& \quad +  \frac{1}{\eps} \int_\Omega  (\rho_{\phi^\eps} - \sigma\phi^\eps) \na \phi^\eps \cdot \xi - W_\sigma(\sigma\phi^\eps)\div\xi \, dx.  
	\end{aligned}
\end{equation}
The rest of the proof consists in passing to the limit $\varepsilon \to 0^+$ in \eqref{eq:eqre} using the assumption of convergence of the energy \eqref{eq:EA} and its consequences.
But first, we point out that the first four terms in \eqref{eq:eqre} can all be bounded by the energy and energy dissipation and are expected to have a limit. This implies that the last term is also bounded (and should have a limit). 
More precisely, \eqref{eq:eqre} implies,  for all $t \in [0,T]$, the following bound for the last term:
\begin{equation}\label{eq:bdgf}
	\begin{aligned}
		&\left|\frac{1}{\eps} \int_\Omega   (\rho_{\phi^\eps(t)} - \sigma\phi^\eps(t)) \na \phi^\eps(t) \cdot \xi(t) - W_\sigma(\sigma\phi^\eps(t))\div\xi(t) \, dx  \right|  \\
		&\qquad\leq \|\xi(t)\|_{L^\infty(\Omega)^d}
		\left(   \int_\Omega  \eps |\pa_t\phi^\eps(t)|^2   \, dx  \right)^{1/2}\left( \int_\Omega  \eps | \na \phi^\eps(t) |^2\, dx \right)^{1/2} \\
		&\qquad\qquad+ C \|D\xi(t)\|_{L^\infty(\Omega)^{d\times d}} \int_\Omega  \eps | \na \phi^\eps(t)|^2\, dx+   \|\div \xi(t)\|_{ L^\infty(\Omega)}   \frac{1}{\eps}  \int_\Omega W_\sigma (\sigma\phi^\eps(t)) \, dx  \\
		&\qquad\leq \|\xi(t)\|_{L^\infty(\Omega)^d} \sD_\eps(t)^{1/2}\big(2\sJ_\eps(\phi_{\init}^\varepsilon)\big)^{1/2}    
		+C \|D\xi(t)\|_{L^{\infty}(\Omega)^{d \times d}} \sJ_\eps(\phi_{\init}^\varepsilon). 
	\end{aligned}
\end{equation}

The next proposition shows that the first four terms in \eqref{eq:eqre} have a limit.
The last term, which gives rise to the Lagrange multiplier, will be handled by Proposition \ref{prop:LM}.
\begin{proposition}\label{prop:convterm}
	Under the assumption of convergence of the energy \eqref{eq:EA} and for any test vector field $\xi \in [C^1_c((0,T)\times \Omega)]^d$, the following 
	convergences hold (for the same subsequence as in Lemma \ref{lem:cv}):
	\begin{equation}\label{eq:conv1}
		\lim_{n\to\infty}  \int_0^T\!\!\!\!\int_\Omega   \frac{\eps_n} 2  |\na \phi^{\eps_n} |^2 \div \xi\, dx \, dt
		= \gamma\int_0^T\!\!\!\!\int_\Omega   \frac 1 2     \div \xi  |\na \phi| \, dt, 
	\end{equation}

	\begin{equation}\label{eq:conv2}
		\lim_{n\to\infty}  \frac{1}{\eps_n} \int_0^T\!\!\!\!\int_\Omega W_\sigma (\sigma \phi^{\eps_n}) \div  \xi\, dx \, dt
		= \gamma\int_0^T\!\!\!\!\int_\Omega   \frac 1 2     \div \xi  |\na \phi| \, dt, 
	\end{equation}
	
	\begin{equation}\label{eq:conv3}
		\lim_{n\to\infty} \int_0^T\!\!\!\!\int_\Omega  \eps_n \na \phi^{\eps_n}\otimes \na \phi^{\eps_n}:D\xi\, dx \, dt 
		=\gamma\int_0^T\!\!\!\!\int_\Omega \nu \otimes  \nu :D\xi\, |\na \phi| \, dt, 
	\end{equation}

	\begin{equation}\label{eq:conv4}
		\lim_{n\to\infty} \int_0^T\!\!\!\! \int_\Omega  {\eps_n} \pa_t\phi^{\eps_n} \na \phi^{\eps_n} \cdot \xi\, dx \, dt
		=  \gamma\int_0^T\!\!\!\!\int_\Omega  V  \xi\cdot \na \phi \, dt.
	\end{equation}

\end{proposition}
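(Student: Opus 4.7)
The overall approach is to exploit the equipartition of energy from Lemma \ref{lem:cv}, together with Reshetnyak's continuity theorem applied to the vector-valued Radon measures $\na F_\sigma(\sigma \phi^{\eps_n})(t,\cdot)$ on $\overline{\Omega}$. As a preliminary step, I would first upgrade \eqref{eq:L2diff} to the $L^1((0,T)\times\Omega)$-equivalences
\begin{equation*}
\eps_n |\na \phi^{\eps_n}|^2 = |\na F_\sigma(\sigma \phi^{\eps_n})| + o(1) = \tfrac{2}{\eps_n} W_\sigma(\sigma \phi^{\eps_n}) + o(1),
\end{equation*}
obtained by factoring each difference as $a^2 - ab = a(a-b)$, applying Cauchy-Schwarz in $L^2$, and invoking the uniform-in-$n$ boundedness of $\|\sqrt{\eps_n}|\na\phi^{\eps_n}|\|_{L^2}$ and $\|\sqrt{2W_\sigma(\sigma\phi^{\eps_n})/\eps_n}\|_{L^2}$ coming from the energy bound.

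For \eqref{eq:conv1} and \eqref{eq:conv2}, I would simply plug the continuous test function $\zeta = \div \xi(t,\cdot) \in C(\overline{\Omega})$ into the pointwise-in-$t$ measure convergences of \eqref{eq:EA1} and integrate in $t$ by Lebesgue dominated convergence, using $\int_\Omega \big(\tfrac{\eps_n}{2}|\na\phi^{\eps_n}|^2 + \tfrac{1}{\eps_n}W_\sigma(\sigma\phi^{\eps_n})\big)\,dx \le 2 \sJ_{\eps_n}(\phi_\init^{\eps_n}) \le C$ as a uniform dominant. For \eqref{eq:conv3}, I recall that \eqref{eq:lim1} in the proof of Lemma \ref{lem:cv} provides, for a.e.\ $t$, both the weak-$\ast$ convergence $\na F_\sigma(\sigma\phi^{\eps_n}(t)) \rightharpoonup \gamma\na\phi(t)$ as $\bR^d$-valued Radon measures on $\overline{\Omega}$ and the convergence of their total variations. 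Reshetnyak's continuity theorem applied to the continuous, positively $1$-homogeneous integrand $f(x,p) = D\xi(t,x):(p\otimes p)/|p|$ then yields, for a.e.\ $t$,
\begin{equation*}
\int_\Omega \nu^{\eps_n}\otimes\nu^{\eps_n}:D\xi(t,\cdot)\,|\na F_\sigma(\sigma\phi^{\eps_n}(t))|\,dx \;\xrightarrow{n\to\infty}\; \gamma\int_\Omega \nu\otimes\nu:D\xi(t,\cdot)\,|\na\phi(t)|,
\end{equation*}
where $\nu^{\eps_n} = \na\phi^{\eps_n}/|\na\phi^{\eps_n}|$ coincides with the Radon-Nikodym derivative of $\na F_\sigma(\sigma\phi^{\eps_n})$ w.r.t.\ its total variation. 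Replacing $|\na F_\sigma(\sigma\phi^{\eps_n})|$ by $\eps_n|\na\phi^{\eps_n}|^2$ via the preliminary step and integrating in $t$ by dominated convergence then yields \eqref{eq:conv3}.

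The main obstacle is \eqref{eq:conv4}, which mixes the temporal information on $\pa_t\phi^{\eps_n}$ (available only against smooth test functions, via Lemma \ref{lem:vel}) with the spatial direction $\nu^{\eps_n}$. Using the identity $\pa_t\psi^{\eps_n} = \sqrt{2W_\sigma(\sigma\phi^{\eps_n})}\,\pa_t\phi^{\eps_n}$ together with the preliminary step and Cauchy-Schwarz against $\sqrt{\eps_n}\pa_t\phi^{\eps_n}$, I would first rewrite
\begin{equation*}
\int_0^T\!\!\int_\Omega \eps_n \pa_t\phi^{\eps_n}\,\na\phi^{\eps_n}\cdot\xi\,dx\,dt \;=\; \int_0^T\!\!\int_\Omega \pa_t\psi^{\eps_n}\,\nu^{\eps_n}\cdot\xi\,dx\,dt + o(1).
\end{equation*}
Since $\nu^{\eps_n}\cdot\xi$ is not a fixed smooth test function, I would then introduce a density argument: approximate $\nu\cdot\xi \in L^2([0,T]\times\overline{\Omega},\,|\na\phi|\,dt)$ by $\chi_k \in C^1_c((0,T)\times\Omega)$. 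For fixed $k$, the principal piece $\int \pa_t\psi^{\eps_n}\chi_k\,dx\,dt$ converges as $n\to\infty$ to $\gamma\int V\chi_k\,|\na\phi|\,dt$ by \eqref{eq:convergenceOfVelocity}, while the remainder is bounded via
\begin{equation*}
\Big|\int \pa_t\psi^{\eps_n}(\nu^{\eps_n}\cdot\xi - \chi_k)\,dx\,dt\Big|^2 \leq \Big(\int \eps_n|\pa_t\phi^{\eps_n}|^2\,dx\,dt\Big)\Big(\int \tfrac{2}{\eps_n}W_\sigma(\sigma\phi^{\eps_n})(\nu^{\eps_n}\cdot\xi - \chi_k)^2\,dx\,dt\Big),
\end{equation*}
whose first factor is $O(1)$ by the dissipation bound. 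Using the $L^\infty$-bound on $(\nu^{\eps_n}\cdot\xi - \chi_k)^2$ together with the preliminary step, and another application of Reshetnyak (now with $g(x,\nu) = (\nu\cdot\xi(t,x) - \chi_k(t,x))^2$ followed by $t$-integration), the second factor converges as $n\to\infty$ to $\gamma\int (\nu\cdot\xi - \chi_k)^2|\na\phi|\,dt$, which vanishes as $k\to\infty$ by the choice of $\chi_k$. Taking first $n \to \infty$ and then $k\to\infty$ thus yields \eqref{eq:conv4}.
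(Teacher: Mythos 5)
Your proposal is correct, and for the first three limits it follows the paper's route essentially verbatim: \eqref{eq:conv1}--\eqref{eq:conv2} come from the equipartition statements \eqref{eq:EA1} plus dominated convergence in $t$, and \eqref{eq:conv3} is exactly the Reshetnyak continuity argument the paper delegates to \cite[Proposition A.3]{KMW2}, combined with the $L^1$-replacement of $|\na F_\sigma(\sigma\phi^{\eps_n})|$ by $\eps_n|\na\phi^{\eps_n}|^2$ (which is \eqref{eq:equipartitionModicaMortola}, already proved in Lemma \ref{lem:cv}).

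Where you genuinely diverge is \eqref{eq:conv4}. The paper compares the approximate normal $\nu^{\eps_n}$ and the limiting normal $\nu$ with a \emph{frozen} direction field $h$, $|h|\le 1$, splitting the error into three pieces $A_n+B_n+E$: the pieces $A_n$ and $E$ are handled by a weighted Young inequality with parameter $\delta$ together with equipartition, $B_n\to 0$ follows from \eqref{eq:L2diff} and \eqref{eq:convergenceOfVelocity}, and one then optimizes over $h$ through the definition of the $BV$-seminorm before letting $\delta\to 0^+$. You instead perform the same reduction to $\int\!\!\int \pa_t\psi^{\eps_n}\,\nu^{\eps_n}\cdot\xi$ (your rewriting is exactly the paper's $B_n$-type manipulation), and then approximate $\nu\cdot\xi$ by smooth test functions $\chi_k$ in $L^2((0,T)\times\Omega,|\na\phi|dt)$, passing to the limit in the main term via \eqref{eq:convergenceOfVelocity} and controlling the remainder by Cauchy--Schwarz against the dissipation together with a second, zero-homogeneous application of Reshetnyak's theorem for the integrand $(\nu\cdot\xi-\chi_k)^2$. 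Both arguments rest on the same three ingredients (dissipation bound, equipartition/\eqref{eq:L2diff}, and Lemma \ref{lem:vel}); your version trades the paper's double optimization in $h$ and $\delta$ for a density argument in $L^2$ of the Radon measure $|\na\phi|dt$ (standard, since $C_c$ is dense in $L^2$ of a finite Radon measure and mollification upgrades to $C^1_c$) plus one more Reshetnyak application. One shared caveat: the identity $\pa_t\psi^{\eps_n}=\sqrt{2W_\sigma(\sigma\phi^{\eps_n})}\,\pa_t\phi^{\eps_n}$ is literally valid only where $\sigma\phi^{\eps_n}\le\theta$, since $F_\sigma$ is extended by a constant above $\theta$; the paper makes the identical identification in Lemma \ref{lem:vel} and in the $B_n$ estimate, so this is not a gap relative to the paper's own argument, but it is worth flagging that where an equality (rather than the inequality $\sigma F_\sigma'\le\sqrt{2W_\sigma}$) is used, the contribution of the set $\{\sigma\phi^{\eps_n}>\theta\}$ must be absorbed into the error terms.
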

In the right-hand side of \eqref{eq:conv3}, $\nu(t)$ is the Radon-Nikodym derivative of the Radon measure $\nabla \phi(t)$ on $\cl\Omega$ w.r.t.\ its total variation $\abs{\nabla\phi(t)}$. It coincides with the measure-theoretic inner unit normal vector to the reduced boundary $\partial^\ast E(t)$, where $E(t) = \supp \phi^0(t)$ is a set of finite perimeter in $\Omega$, as shown in Proposition \ref{prop:conv}. In particular, for a.e. $t \in [0,T]$, we have  $\abs{\nu(t,x)} = 1$ for $\abs{\nabla\phi(t)}$-a.e.\ $x$.

\begin{proof}
	
	The first two limits, \eqref{eq:conv1} and \eqref{eq:conv2}, follow, respectively, from the second and third limits in \eqref{eq:EA1} of Lemma \ref{lem:cv} and Lebesgue dominated convergence theorem.
	The third limit \eqref{eq:conv3} is handled as in the proof of Proposition 5.2 in \cite{KMW2}. It is a particular case of Reshetnyak's continuity theorem; we refer to 
	\cite[Proposition A.3]{KMW2} for a simple proof. 
	This proof can in fact be adapted to prove \eqref{eq:conv4}, as we show below. Though written a bit differently, this forthcoming argument is essentially similar to the one used in \cite[Section 3.3]{Laux3}. 
	
	Given a vector field $h\in L^\infty(0,T;[C^1_c(\Omega)]^d)$ such that $|h(t,x)|\leq 1$ for all $t,x$, we write
	$$
	\left| \int_0^T    \int_\Omega  {\eps_n} \pa_t\phi^{\eps_n} \na \phi^{\eps_n} \cdot \xi\, dx \, dt
	-  \gamma \int_0^T   \int_\Omega  V  \xi\cdot \na \phi \, dt\right| \leq A_n + B_n + E
	$$
	with
	\begin{align*}
		A_n & :=\left| \int_0^T   \int_\Omega  {\eps_n} \pa_t\phi^{\eps_n} \na \phi^{\eps_n} \cdot \xi\, dx \, dt
		- \int_0^T   \! \int_\Omega  {\eps_n} \pa_t\phi^{\eps_n}| \na \phi^{\eps_n} | h \cdot \xi\, dx \, dt \right|,\\
		B_n&  :=\left| \int_0^T    \int_\Omega  {\eps_n} \pa_t\phi^{\eps_n}| \na \phi^{\eps_n} | h \cdot \xi\, dx \, dt - 
	 \gamma\int_0^T   \int_\Omega  V  h\cdot\xi | \na \phi |  \, dt\right|,\\
		E& :=\gamma \left|  \int_0^T   \int_\Omega  V  h\cdot\xi | \na \phi |  \, dt - \int_0^T \int_\Omega  V  \xi\cdot \na \phi \, dt\right|.
	\end{align*}
	The idea behind this splitting is to compare the direction of the approximate unit normal vector (or its limit) with some arbitrary but frozen direction (encoded by the vector field $h$). The task is then to unfreeze this direction and show it can be chosen in a manner that approximates arbitrarily well the direction of the limiting unit normal vector.
	
	We introduce some notation for the approximate unit normal vector $\nu^\eps :=\frac{\na \phi^\eps}{|\na \phi^\eps|}$ (and defined to be some arbitrary member of $\bS^{d-1}$ whenever $\abs{\nabla \phi^\varepsilon} = 0$), and we estimate
	\begin{align*}
		A_n & = \left| \int_0^T    \int_\Omega \eps_n \pa_t \phi^{\eps_n} |\na \phi^{\eps_n}| \xi \cdot (\nu^{\eps_n} - h) \, dx \, dt\right|\\
		&\leq \delta \int_0^T\int_\Omega \eps_n |\pa_t \phi^{\eps_n} |^2   \, dx \, dt
		+ \frac {\|\xi\|_{L^\infty(0,T; L^\infty(\Omega)^d)}}{ \delta} \int_0^T\int_\Omega \eps_n |\na \phi^{\eps_n}|^2 |\nu^{\eps_n} - h|^2 \, dx \, dt\\
		& \leq \delta \sJ_{\eps_n}(\phi_\init^{\varepsilon_n})
		+2 \frac {\|\xi\|_{L^\infty(0,T; L^\infty(\Omega)^d)}}{ \delta} \int_0^T\int_\Omega \eps_n |\na \phi^{\eps_n}|^2 \left(1 - \nu^{\eps_n} \cdot  h\right) \, dx \, dt,
	\end{align*}
	where we used the energy dissipation inequality \eqref{eq:energy} and the fact that $|\nu^\eps - h|^2 = |\nu^\eps|^2 -2 \nu^\eps \cdot  h+ |h|^2 \leq 2-2\nu^\eps \cdot  h$ (since $|\nu^\eps|=1$ and $|h|\leq 1$).
	Furthermore, equipartition of the energy permits us to replace $\varepsilon_n\abs{\nabla \phi^{\varepsilon_n}}^2$ by $\abs{\nabla F_\sigma(\sigma\phi^{\varepsilon_n})}$ up to some asymptotically negligible error (see \eqref{eq:equipartitionModicaMortola}):
	\begin{align*}
		\int_0^T\int_\Omega \eps_n |\na \phi^{\eps_n}|^2 \left(1 - \nu^{\eps_n} \cdot  h\right) \, dx \, dt
		& =\int_0^T\int_\Omega  |\na F_\sigma(\sigma \phi^{\eps_n})| \left(1 - \nu^{\eps_n} \cdot  h\right) \, dx \, dt + o(1)\\
		& = \int_0^T\int_\Omega  |\na \psi^{\varepsilon_n}|   \, dx \, dt-\int_0^T\int_\Omega  \na \psi^{\varepsilon_n} \cdot h \, dx \, dt +o(1).
	\end{align*}
	Above, we reintroduced the auxiliary function $\psi^{\varepsilon_n} :=F_\sigma(\sigma \phi^{\eps_n})$ and used the observation that $\nu^{\eps_n} = \frac{\na \psi^{\varepsilon_n}}{|\na \psi^{\varepsilon_n}|}$. The first integral on the second line has a limit (using \eqref{eq:lim1}) and (after an integration by parts) the second integral has a limit (using Proposition \ref{prop:conv})). Thus, 
	$$
	\limsup_{n\to\infty} A_n \leq 
	\delta \sup_{n\in\bN}\sJ_{\eps_n}(\phi_{\init}^{\varepsilon_n})
	+2 \frac {\|\xi\|_{L^\infty(0,T; L^\infty(\Omega)^d)}}{ \delta}\left( 
	\gamma \int_0^T\int_\Omega  |\na \phi|    \, dt+\int_0^T\int_\Omega \gamma \phi \, \div h \, dx \, dt \right).
	$$
	We recall that the well-preparedness assumption on the initial data guarantees that the energy of the initial data is uniformly bounded.
	\medskip
	
	We can  bound the term $E$ similarly: Replacing $\nabla\phi(t)$ with $\nu(t)\abs{\nabla \phi(t)}$ 
	and using a similar argument as above leads to 
	\begin{align*}
	E 
	& =  \gamma \left|  \int_0^T   \int_\Omega  V  (h-\nu) \cdot\xi | \na \phi |  \, dt \right|\\
	& \leq 
	\delta \gamma  \int_0^T \int_\Omega V^2 |\na \phi| \, dt 
	+2  \gamma^2 \frac {\|\xi\|_{L^\infty(0,T; L^\infty(\Omega)^d)}}{ \delta}\left( 
	\int_0^T\int_\Omega  |\na \phi|    \, dt+\int_0^T\int_\Omega  \phi \, \div h \, dx \, dt \right).
	\end{align*}
	
	\medskip

	Finally, we claim that $\lim_{n\to\infty} B_n=0$. Indeed,  \eqref{eq:L2diff} of Lemma \ref{lem:cv}
	and the fact that $\eps_n^{1/2} \pa_t \phi^{\eps_n}$ is bounded in $L^2$ 
	 permits us to replace $\varepsilon_n^{1/2}\abs{\nabla\phi^{\varepsilon_n}}$ with $\varepsilon_n^{-1/2}\sqrt{2W_\sigma(\sigma\phi^{\varepsilon_n})}$. We then introduce the auxiliary function $\psi^{\varepsilon_n} :=F_\sigma(\sigma\phi^{\varepsilon_n})$ and recall $F'_\sigma(v) = \frac{1}{\sigma} \sqrt{2W_\sigma(v)}$, and by using \eqref{eq:convergenceOfVelocity} from Lemma \ref{lem:vel}, we find
	\begin{align*}
		\lim_{n\to\infty } \int_0^T    \int_\Omega  {\eps_n} \pa_t\phi^{\eps_n}| \na \phi^{\eps_n} | h \cdot \xi\, dx \, dt  
		& = \lim_{n\to\infty } \int_0^T    \int_\Omega   \pa_t\phi^{\eps_n} \sqrt{2W_\sigma(\sigma \phi^{\eps_n})} h \cdot \xi\, dx \, dt  \\
		& = \lim_{n\to\infty } \int_0^T    \int_\Omega   \pa_t\psi^{\varepsilon_n}   h \cdot \xi\, dx \, dt  \\
		& =  \gamma\int_0^T    \int_\Omega  V h \cdot \xi\, |\na \phi|  \, dt 
	\end{align*}
hence 	$\lim_{n\to\infty} B_n=0$.

\medskip

	Putting things together, we proved
	\begin{gather*}
		\limsup_{n\to\infty}
		\left| \int_0^T    \int_\Omega  {\eps_n} \pa_t\phi^{\eps_n} \na \phi^{\eps_n} \cdot \xi\, dx \, dt
		- \gamma  \int_0^T   \int_\Omega  V  \xi\cdot \na \phi \, dt\right| \\
		\leq C\delta + \frac{C}{\delta} \left( 
		\int_0^T\int_\Omega  |\na \phi|   \, dx \, dt+\int_0^T\int_\Omega  \phi \, \div h \, dx \, dt \right).
	\end{gather*}
	Since the first line is independent of $h$, we can use the definition of the $BV(\Omega)$-seminorm of $\phi$ to make the $\cO(\delta^{-1})$ term on the second line arbitrarily small, which leads to
	$$
	\limsup_{n\to\infty}
	\left| \int_0^T    \int_\Omega  {\eps_n} \pa_t\phi^{\eps_n} \na \phi^{\eps_n} \cdot \xi\, dx \, dt
	-   \gamma \int_0^T   \int_\Omega  V  \xi\cdot \na \phi \, dt\right| 
	\leq C\delta.
	$$
	We can now let $\delta\to0^+$ to get the result. 
\end{proof}

The final task is to 
identify the limit of the last term in \eqref{eq:eqre} and 
show that the deviation between the our nonlocal reaction term and the standard local reaction term converges weakly to the Lagrange multiplier $\Lambda = \Lambda(t)$ that is responsible for the volume preservation of the limiting flow.
We will make use of the particular form of the term $\rho_{\phi^{\eps_n}}$ given by Proposition \ref{prop:rho}-(ii) and (iii).

\begin{proposition}[Existence of the Lagrange multiplier]\label{prop:LM}
	Under the assumption of convergence of the energy \eqref{eq:EA},
	there exists $\Lambda \in L^2(0,T)$ such that
	\begin{equation}\label{eq:conv5}
		\lim_{n\to\infty}  \frac{1}{\eps_n} \int_0^T\int_\Omega  (\rho_{\phi^{\eps_n}} - \sigma\phi^{\eps_n}) \na \phi^{\eps_n} \cdot \xi - W_\sigma(\sigma \phi^{\eps_n})\div\xi \, dx \, dt  = - \int_0^T \Lambda (t)\int_\Omega    \gamma \phi\,  \div  \xi \, dt 
	\end{equation}
	for all $\xi \in [C^1([0,T] \times \cl\Omega)]^d$  satisfying $\xi\cdot n=0$ on $\pa\Omega$, where $n$ is the unit normal vector to $\partial\Omega$.
\end{proposition}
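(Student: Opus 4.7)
The strategy is two integrations by parts to rewrite $I_n(\xi)$ as $-\eps_n^{-1}\int H_n(\phi^{\eps_n})\div\xi\,dx\,dt$ for an explicit scalar $H_n$, followed by extraction of a weak-$L^2(0,T)$ limit of the rescaled Lagrange multiplier. Introduce the \emph{unconstrained minimizer} $\tilde\rho_\phi := (f')^{-1}((\phi-a)_+) = (f^*)'(\phi - a)$ appearing in the infimum defining $W_\sigma(\sigma\phi)$ without the mass constraint; as noted around \eqref{eq:eqg}, this is the minimizer one would obtain by dropping the mass condition $\int\rho=1$. The envelope identity gives $\sigma W_\sigma'(\sigma\phi) = \sigma\phi - \tilde\rho_\phi$, hence $\nabla W_\sigma(\sigma\phi^{\eps_n}) = (\sigma\phi^{\eps_n} - \tilde\rho_{\phi^{\eps_n}})\nabla\phi^{\eps_n}$. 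Integrating $\int_\Omega W_\sigma(\sigma\phi^{\eps_n})\div\xi\,dx$ by parts — the boundary term vanishes since $\xi\cdot n=0$ — the $\sigma\phi^{\eps_n}$ pieces cancel against those from the first term of $I_n$ and we obtain
\[
I_n(\xi) = \frac{1}{\eps_n}\int_0^T\!\!\!\int_\Omega (\rho_{\phi^{\eps_n}} - \tilde\rho_{\phi^{\eps_n}})\nabla\phi^{\eps_n}\cdot\xi\,dx\,dt.
\]
Setting $H_n(u):= f^*(u-\ell_n) - f^*(u-a)$, one has $H_n'(u) = (f^*)'(u-\ell_n) - (f^*)'(u-a)$ and hence $(\rho_{\phi^{\eps_n}} - \tilde\rho_{\phi^{\eps_n}})\nabla\phi^{\eps_n} = \nabla[H_n(\phi^{\eps_n})]$. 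A second integration by parts combined with the exact factorization $H_n(u) = -\kappa_n Q_n(u)$, with $\kappa_n(t) := \ell_n(t) - a$ and $Q_n(u) := \int_0^1(f^*)'(u - a - s\kappa_n)\,ds$, yields
\[
I_n(\xi) = \int_0^T\mu_n(t)\int_\Omega Q_n(\phi^{\eps_n})\div\xi\,dx\,dt, \qquad \mu_n(t) := \frac{\kappa_n(t)}{\eps_n}.
\]

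The function $Q_n$ is uniformly bounded, since $(f^*)'$ is continuous on bounded sets, and $\kappa_n \to 0$ in $L^2(0,T)$: the strict convexity estimate $W(\rho) + \tfrac{1}{2\sigma}(\rho - \sigma\phi)^2 - W_\sigma(\sigma\phi) \ge \tfrac{1}{2\sigma}(\rho - \tilde\rho_\phi)^2$ around the unconstrained minimizer combined with Lemma \ref{lem:cv}-\eqref{eq:Wg} yields $\|\rho_{\phi^{\eps_n}} - \tilde\rho_{\phi^{\eps_n}}\|_{L^2((0,T)\times\Omega)} = o(\sqrt{\eps_n})$, while the relation $f'(\rho_{\phi^{\eps_n}}) - f'(\tilde\rho_{\phi^{\eps_n}}) = a - \ell_n$ on the ``bulk'' $\{\phi^{\eps_n} > \max(a,\ell_n)\}$ (where both densities are positive, and $f''$ is bounded there) transfers this to $\|\kappa_n\|_{L^2(0,T)} = o(\sqrt{\eps_n})$. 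Combined with $\phi^{\eps_n}\to\phi = \frac{\theta}{\sigma}\chi_E$ from Proposition \ref{prop:conv} and dominated convergence, this gives $Q_n(\phi^{\eps_n}) \to \tilde\rho_\phi = \sigma\phi$ strongly in $C([0,T]; L^q(\Omega))$ for all $q \in [1,\infty)$.

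The critical step is bounding $\mu_n$ in $L^2(0,T)$. I test $I_n$ against $\xi(t,x) = h(t)\Psi(x)$ for arbitrary $h \in L^2(0,T)$ and a fixed smooth vector field $\Psi(x) := x\eta(x)$, where $\eta$ is a smooth cut-off equal to $1$ on a neighborhood of $\bigcup_{t \in [0,T]} E(t)$ in $\Omega$ (handling any failure of compact containment via a partition of unity in time, replacing $x$ by $x-x_0$ on each sub-interval as needed). The pointwise bound \eqref{eq:bdgf}, the uniform $L^1(0,T)$-bound on $\sD_{\eps_n}$ from the energy dissipation \eqref{eq:energy}, and Cauchy-Schwarz give $|I_n(h\Psi)| \le C_\Psi\|h\|_{L^2(0,T)}$. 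Since $\beta_n(t) := \int_\Omega Q_n(\phi^{\eps_n})\div\Psi\,dx \to \sigma\int_\Omega\phi\div\Psi\,dx = d$ (the spatial dimension) uniformly in $t$, duality yields $\|\mu_n\|_{L^2(0,T)} \le C$. Extracting a subsequence $\mu_n \rightharpoonup \mu$ weakly in $L^2(0,T)$ and combining with the strong convergence of $\int_\Omega Q_n(\phi^{\eps_n})\div\xi\,dx$ in $C([0,T])$ for arbitrary $\xi$ provides $\lim_{n\to\infty} I_n(\xi) = \int_0^T\mu(t)\int_\Omega\sigma\phi\div\xi\,dx\,dt$, and the stated identity follows upon setting $\Lambda(t) := -\sigma\mu(t)/\gamma$. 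The principal obstacle is precisely this $L^2(0,T)$-bound on $\mu_n$: a direct estimate via the mass constraint alone yields only $\|\mu_n\|_{L^2(0,T)} = o(1/\sqrt{\eps_n})$, which is a priori unbounded. Its resolution rests on the duality argument above, which crucially exploits the already-established convergence of all other terms in \eqref{eq:eqre} via Proposition \ref{prop:convterm} to force $I_n$ to be bounded as a functional on $C^1$ — a strictly finer cancellation than mass conservation alone provides.
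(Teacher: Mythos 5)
Your algebraic reduction is a clean variant of the paper's first step: the exact identity $I_n(\xi)=\int_0^T\mu_n(t)\int_\Omega Q_n(\phi^{\eps_n})\,\div\xi\,dx\,dt$ with $\mu_n=(\ell_n-a)/\eps_n$, obtained by two integrations by parts, parallels the paper's identity (which keeps the weight $\rho_{\phi^{\eps_n}}$ and carries an $\cO(z^{\eps_n})$ error via \eqref{eq:Wg}), and your reduction of everything to an $L^2(0,T)$ bound on $\mu_n$ through \eqref{eq:bdgf} is the same strategy as in the paper. The genuine gap is in the construction of the admissible test field for that bound. You take $\Psi=x\,\eta(x)$ with a cut-off $\eta\equiv1$ on a neighborhood of $\bigcup_{t}E(t)$, but nothing guarantees that the phases $E(t)$ stay away from $\partial\Omega$: they are merely finite-perimeter subsets of $\Omega$ with $|E(t)|=1/\theta$ and may (and in general will) touch the boundary. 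On a domain with $x\cdot n>0$ on $\partial\Omega$, the admissibility constraint $\Psi\cdot n=0$ forces $\eta=0$ on $\partial\Omega$, so no cut-off equal to $1$ near a boundary-touching $E(t)$ exists, and ``replacing $x$ by $x-x_0$ on sub-intervals'' does not address this. Tellingly, your argument never uses the standing assumption $\theta|\Omega|>1$ in \eqref{eq:st}, which cannot be dispensed with: if $\theta|\Omega|=1$ then $E(t)=\Omega$ and $\int_\Omega\sigma\phi\,\div\xi\,dx=\theta\int_{\partial\Omega}\xi\cdot n\,dS=0$ for \emph{every} admissible $\xi$, so no lower bound $\beta_n(t)\ge c>0$ is possible. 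The paper closes exactly this hole in its Steps 2--5: for each $t_0$ it solves the Neumann problem $\Delta u=\psi_{t_0}$, $\nabla u\cdot n=0$ with $\psi_{t_0}\approx\sigma\phi(t_0)-|\Omega|^{-1}$, so that $\xi_{t_0}:=\nabla u$ satisfies $\xi_{t_0}\cdot n=0$ and $\int_\Omega\sigma\phi(t_0)\div\xi_{t_0}\,dx\ge\eta/2$ with $\eta=\theta\big(1-(\theta|\Omega|)^{-1}\big)>0$, and then the $C^{0,1/4}([0,T];L^1)$ continuity of $\phi$ plus a finite cover of $[0,T]$ make the lower bound uniform in $t$ and $n$. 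Without some such construction, the $L^2(0,T)$ bound on $\mu_n$ --- which you correctly identify as the crux --- is not established.

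A secondary flaw: the claimed inequality $W(\rho)+\frac{1}{2\sigma}(\rho-\sigma\phi)^2-W_\sigma(\sigma\phi)\ge\frac{1}{2\sigma}(\rho-\tilde\rho_\phi)^2$ is false under the hypotheses of Theorem \ref{thm:main}. The function $u\mapsto W(u)+\frac{1}{2\sigma}(u-\sigma\phi)^2$ has second derivative exactly $f''(u)$, and the double-well structure requires $f''\le 1/\sigma$ near $u=0$ (for $f=f_m$, $f''(0^+)=0$); so one only gets the Bregman divergence of $f$, and a $\frac{1}{2\sigma}$-quadratic modulus would need \ref{hyp:fStronglyConvex}, which is not assumed. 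Similarly, ``$f''$ bounded on the bulk'' presumes uniform $L^\infty$ control of the densities that you do not have. This particular step is repairable without any rate on $\kappa_n$: by monotonicity of $(f^*)'$, $Q_n(\phi^{\eps_n})$ is pointwise squeezed between $\tilde\rho_{\phi^{\eps_n}}=(f^*)'(\phi^{\eps_n}-a)$ and $\rho_{\phi^{\eps_n}}=(f^*)'(\phi^{\eps_n}-\ell_n)$, both of which converge to $\sigma\phi$ in $L^\infty(0,T;L^1(\Omega))$ (the latter by Proposition \ref{prop:conv}, the former by continuity of the Nemytskii map and the convergence of $\phi^{\eps_n}$), which is all your duality step needs. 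The missing test-field construction of the previous paragraph, however, is an essential ingredient, not a technicality.
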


\begin{proof}[Proof of Proposition \ref{prop:LM}] The proof is divided in several steps.
	\medskip
	
	\noindent{\bf Step 1:} First we are going to show that for all $t\in[0,T]$ there exists a constant $ \Lambda^{\eps_n}(t) \in \bR$ such that
	\begin{equation}\label{eq:lambda0}
		\begin{aligned}
			\frac{1}{\eps_n} \int_\Omega  (\rho_{\phi^{\eps_n}(t)} - \sigma\phi^{\eps_n}(t)) \na \phi^{\eps_n}(t) \cdot \xi(t) - W_\sigma(\sigma \phi^{\eps_n}(t))\div\xi(t) \, dx 
			\\
			\qquad=   -  \Lambda^{\eps_n}(t)  \int_\Omega  \frac{ \gamma }{\sigma}
			\rho_{\phi^{\eps_n}(t)} \div \xi(t)  \, dx + \cO \big(\| \div\xi(t) \|_{L^\infty(\Omega)} z^{\eps_n}(t) \big),
		\end{aligned}
	\end{equation}
	where the function $t \mapsto z^{\eps_n}(t)$ (defined in \eqref{eq:zeps}) was the deviation between the energy $\sJ_{\varepsilon_n}(\phi^{\varepsilon_n}(t))$ and the optimal lower bound on the Modica-Mortola functional, and vanishes strongly in $L^1(0,T)$ as $n \to \infty$.

	The convergence of the energy assumption \eqref{eq:EA} implies that the standard local reaction term and our nonlocal reaction term are asymptotically close in $L^1((0,T)\times\Omega)$; this is \eqref{eq:Wg} of Lemma \ref{lem:cv}, and the error is measured in $L^1(\Omega)$ pointwise in time by the second inequality in \eqref{eq:lim3}. Using this and the definition \eqref{eq:doubleWellPotential-Density} of $W$, we obtain (after an integration by parts)
	\begin{gather*}
		\frac{1}{\eps_n}  \int_\Omega   W_\sigma(\sigma \phi^{\eps_n}(t))\div\xi(t) \, dx \\
		= 
		\frac{1}{\eps_n} \int_\Omega   \left[f(\rho_{\phi^{\eps_n}(t)}) +  a \rho_{\phi^{\eps_n}(t)} -\rho_{\phi^{\eps_n}(t)} \phi^{\eps_n}(t)+ \frac{\sigma}{2} |\phi^{\eps_n}(t)|^2 \right] \div\xi(t) \, dx   + \mathcal O (\| \div\xi(t) \|_{L^\infty_x} z^{\eps_n}(t) )\\
		= 
		\frac{1}{\eps_n} \int_\Omega   \left[f(\rho_{\phi^{\eps_n}}(t)) +  a \rho_{\phi^{\eps_n}(t)} -\rho_{\phi^{\eps_n}(t)} \phi^{\eps_n}(t)   \right] \div\xi(t) -  \sigma\phi^{\eps_n}(t) \na \phi^{\eps_n}(t) \cdot \xi(t)  \, dx + \mathcal O (\| \div\xi(t) \|_{L^\infty_x} z^{\eps_n}(t)).
	\end{gather*}
	Recall also that $\phi^\varepsilon(t) = f'(\rho_{\phi^\varepsilon(t)}) + \ell^\varepsilon(t)$ a.e.\ on $\set{\rho_{\phi^\varepsilon(t)} > 0}$ (see Proposition \ref{prop:rho}-(ii)), where $\ell^\varepsilon(t) \in \bR$ is the Lagrange multiplier for the mass constraint on $\rho_{\phi^\varepsilon(t)}$. Thus, after multiplying this equality by $\rho_{\phi^\varepsilon(t)}$, this equality is valid a.e.\ on $\Omega$, so the above becomes
	\begin{gather*}
		= \frac{1}{\varepsilon_n}\int_\Omega \big[f(\rho_{\phi^{\varepsilon_n}(t)}) - \rho_{\phi^{\varepsilon_n}(t)}f'(\rho_{\phi^{\varepsilon_n}(t)})  \big]\div\xi(t) - \sigma\phi^{\varepsilon_n}(t) \nabla \phi^{\varepsilon_n}(t) \cdot \xi(t) \,dx \\
		+ \frac{a - \ell^{\varepsilon_n}(t)}{\varepsilon_n}\int_\Omega\rho_{\phi^{\varepsilon_n}(t)} \div\xi(t)  \,dx + \cO(\norm{\div\xi(t)}_{L^\infty_x} z^{\varepsilon_n}(t)).
	\end{gather*}
	Substituting this into the left-hand side of \eqref{eq:lambda0} gives 
	\begin{gather*}
		\frac{1}{\eps_n}  \int_\Omega  (\rho_{\phi^{\eps_n}(t)} - \sigma\phi^{\eps_n}(t)) \na \phi^{\eps_n}(t) \cdot \xi(t) - W_\sigma(\sigma \phi^{\eps_n}(t))\div\xi(t) \, dx \\
		= 
		\frac{1}{\eps_n}
		\int_\Omega  \rho_{\phi^{\eps_n}(t)}   \na \phi^{\eps_n}(t) \cdot \xi(t) + \nabla \big[f(\rho_{\phi^{\varepsilon_n}(t)}) - \rho_{\phi^{\varepsilon_n}(t)}f'(\rho_{\phi^{\varepsilon_n}(t)})  \big] \cdot \xi(t) \,dx \\
		- \frac{a - \ell^{\varepsilon_n}(t)}{\varepsilon_n}\int_\Omega \rho_{\phi^{\varepsilon_n}(t)} \div\xi(t) \,dx 
		+ \mathcal O (\| \div\xi(t) \|_{L^\infty_x} z^{\eps_n}(t) ).
	\end{gather*}
	From the Euler-Lagrange equation satisfied by $\rho_{\phi^\varepsilon(t)}$ for each $t$ (see Proposition \ref{prop:rho}-(iii)), we see that the integral on the second line vanishes. We thus obtain \eqref{eq:lambda0} with
	$$ \Lambda^{\eps_n}(t) :=  \frac{\sigma}{\gamma} \frac{ a - \ell^{\eps_n}(t)   }{  \eps_n}.$$
	\medskip

	\noindent{\bf Step 2:} Ultimately, we aim to pass to the limit in \eqref{eq:lambda0}, and for that we want to show that 
	$\pseq{\Lambda^{\varepsilon_n}}_n$ is bounded in $L^2(0,T)$.
This will be proved by using \eqref{eq:lambda0} and showing that for  sufficiently small $\varepsilon_n$ and a well-chosen vector field $x \mapsto \xi(x)$,  the integral $\int_\Omega \rho_{\phi^{\varepsilon_n}(\cdot)}(x)\div\xi(x)\,dx$ can be bounded below  (locally uniformly in $t$).
	
	In this step, we first show that 
	there exists $\eta > 0$ such that for each $t_0 \in [0,T]$
	 there exists $\xi_{t_0} \in C^{1,\alpha}(\Omega; \bR^d)$, for some $\alpha \in (0,1)$, such that 
	\begin{equation}\label{eq:xi0}
	\int_\Omega \sigma\phi(t_0) \div\xi_{t_0} \,dx \ge \frac{\eta}{2}.
	\end{equation}
	
	Indeed, given $t_0 \in [0,T]$ we take $\pseq{\psi_{k}}_k \subset C^\infty(\cl\Omega)$ (which depends on $t_0$)  such that $ \int_\Omega \psi_{k} \,dx = 0$ and 
	\begin{gather*}
		\lim_{k\to\infty} \psi_{k} = \sigma \phi(t_0) - \frac{1}{\abs{\Omega}}	\quad \text{strongly in } L^1(\Omega).
	\end{gather*}
(note that this limit has integral zero over $\Omega$).
Since $\sigma\phi(t_0)=\theta \chi_{E(t_0)} \in L^\infty(\Omega)$ and $\pseq{\psi_{k}}_k$ converges strongly in $L^1(\Omega)$, we have the following convergence: 
	\begin{gather*}
		\lim_{k\to\infty} \int_\Omega \sigma\phi(t_0) \psi_{k} \,dx = \int_\Omega \sigma\phi(t_0) \big( \sigma\phi(t_0) - \frac{1}{\abs{\Omega}} \big)\,dx = \theta\int_{E(t_0)} \theta \chi_{E(t_0)} - \frac{1}{\abs{\Omega}} \,dx \\
		= \theta \big(\theta\abs{E(t_0)} - \frac{\abs{E(t_0)}}{\abs{\Omega}}\big) = \theta\big(1 - \frac{1}{\theta\abs{\Omega}}\big) > 0,
	\end{gather*}
	where the final inequality follows from the assumption \eqref{eq:st} that $\theta\abs{\Omega}>1$. Set
	\begin{equation} \label{eq:lowerBoundConstant}
		\eta :=\theta\big(1 - \frac{1}{\theta\abs{\Omega}}\big) > 0,
	\end{equation}
	which we note is independent of $t_0$.
	
	To find the corresponding  vector fields $\xi$, we solve the potential problems:
	\begin{equation}\label{eq:potentialProblemLagrangeMultiplier}
		\begin{cases}
			\lap u_{k} = \psi_{k}	&	\text{in } \Omega,\\
			\nabla u_{k} \cdot n = 0	&	\text{on } \partial\Omega.
		\end{cases}
	\end{equation}
	The solution to the above exists since $\int_\Omega\psi_{k} \,dx = 0$.
	 Classical Schauder estimates imply $\xi_{k} :=\nabla u_{k} \in C^{1,\alpha}(\Omega)$, and by construction  
$\div \xi_{k} = \psi_{k}$ on $\Omega$, $\xi_{k} \cdot n = 0$ on $\partial\Omega$, and 
	\[
	\lim_{k \to \infty} \int_\Omega \sigma\phi(t_0) \div\xi_{k} \,dx = \eta.
	\]
	We thus define $\xi_{t_0} :=\xi_k$ (and $\psi_{t_0} :=\div  \xi_{t_0} = \psi_k$) for some $k$ large enough for which  \eqref{eq:xi0} holds.
	
	\medskip
	\noindent{\bf Step 3:} The main concern with \eqref{eq:xi0} is that $\xi$ depends on $t_0$. But, as we will show below, the continuity of $\phi$ implies that we can use the same $\xi_{t_0}$ for all $t$ in some small interval around $t_0$.
	
Indeed, we have $\phi \in C^{1/4}([0,T]; L^1(\Omega))$  (see Proposition \ref{prop:conv}) and so
	\begin{align*}
		\int_\Omega \sigma\phi(t) \div\xi_{t_0} \,dx &= \int_\Omega \sigma\phi(t_0) \div\xi_{t_0} \,dx + \int_\Omega \sigma(\phi(t) - \phi(t_0)) \psi_{t_0} \,dx \\
		&\ge \frac{\eta}{2} - \sigma\norm{\phi(t) - \phi(t_0)}_{L^1(\Omega)} \norm{\psi_{t_0}}_{L^{\infty}(\Omega)}	\\
		&\ge \frac{\eta}{2} - C \norm{\psi_{t_0}}_{L^\infty(\Omega)} |t-t_0|^{1/4}.
	\end{align*}
	We  can choose $\psi_{t_0}$ from Step 2 satisfying $\norm{\psi_{t_0}}_{L^\infty(\Omega)} \le \max\left\{\abs{\theta - \abs{\Omega}^{-1}}, \, \abs{\Omega}^{-1}\right\}$.
	Thus, we can choose $\delta > 0$, independent of $t_0$, 
	such that $C \norm{\psi_{t_0}}_{L^\infty(\Omega)} |t-t_0|^{1/4}\leq \eta/4$ when $|t-t_0|\leq \delta$ and get:
	\[
	\forall \, t \in (t_0 - \delta, t_0 + \delta),	\qquad	\int_\Omega \sigma\phi(t) \div \xi_{t_0} \,dx \ge \frac{\eta}{4}.
	\]

	\medskip 
	\noindent{\bf Step 4:} 
	The strong convergence of $\rho_{\phi^{\varepsilon_n}(t)} $ to   $\sigma\phi(t) $ in $L^\infty(0,T;L^1(\Omega))$ (Proposition \ref{prop:conv}) implies that there exists $\eps_0>0$ such that 
	\[
	\forall \, t \in (t_0 - \delta, t_0 + \delta),\quad \forall \eps_n\leq \eps_0	\qquad \int_\Omega \rho_{\phi^{\varepsilon_n}(t)} \div \xi_{t_0} \,dx \ge \frac{\eta}{8}
	\] 
(once again, the fact that $\div \xi_{t_0}  = \psi_{t_0} $ is bounded in $L^\infty$ uniformly in $t_0$ implies that $\eps_0$ does not depend on $t_0$).

	\medskip
	\noindent{\bf Step 5:} Since $\seq{(t_0 - \delta, t_0+\delta)}_{t_0 \in [0,T]}$ is an open cover of $[0,T]$, by compactness there exists a finite open subcover $\seq{(t_i - \delta, t_i + \delta)}_{i=1}^N$. Suppose $\varepsilon_n < \varepsilon$, where $\varepsilon$ is from Step 4, and fix $i \in \set{1,\dots,N}$ and $t \in (t_i - \delta, t_i + \delta)$. On the one hand, Step 4 gives 
	\[
\abs{\Lambda^{\varepsilon_n}(t)  \int_\Omega \rho_{\phi^{\varepsilon_n}(t)} \div\xi_{t_i} \,dx } =	\abs{\Lambda^{\varepsilon_n}(t)} \int_\Omega \rho_{\phi^{\varepsilon_n}(t)} \div\xi_{t_i} \,dx \ge \frac{\eta}{8}\abs{\Lambda^{\varepsilon_n}(t)}.
	\]
	On the other hand, from the equality \eqref{eq:lambda0} in Step 1 and the calculation \eqref{eq:bdgf} (with $\xi_{t_i}$ in place of $\xi(t)$)
	\[
	\abs{\Lambda^{\varepsilon_n}(t)\int_\Omega\rho_{\phi^{\varepsilon_n}(t)} \div\xi_{t_i} \,dx} \le C\norm{\psi_{t_i}}_{L^\infty(\Omega)} z^{\varepsilon_n}(t) + C\norm{\xi_{t_i}}_{L^\infty(\Omega)^d} \sD_{\varepsilon_n}^{1/2}(t) + C \norm{D\xi_{t_i}}_{L^\infty(\Omega)^{d \times d}},
	\]
	where we have used that the initial energies are bounded. 
	We recall that for each $t_i$,  $\xi_{t_i} \in C^{1,\alpha}(\Omega)^d$ (although the corresponding bound is not uniform in $t_i$)
	and so
	$$
	 \frac{\eta}{8}\abs{\Lambda^{\varepsilon_n}(t)} \leq 
	 K\left(   z^{\varepsilon_n}(t) +  \sD_{\varepsilon_n}^{1/2}(t) + C \right)\leq K\left(   \sJ_{\eps_n} (\phi^{\eps_n}(t)) +  \sD_{\varepsilon_n}^{1/2}(t) + C \right)
$$
where 
$$K =  \sup_{i=1,\dots ,N} \norm{\xi_{t_i}}_{C^{1,\alpha}(\Omega)^{d }}<\infty.$$
	
We deduce (using the energy dissipation inequality \eqref{eq:energy} and the bound on the energy of the initial data)
 
	\[
	\int_0^T\abs{\Lambda^{\varepsilon_n}(t)}^2 \,dt \le  C K
	\]
	
	We thus extract a (not relabled) subsequence along which $\Lambda^{\varepsilon_n} \rightharpoonup \Lambda$ weakly in $L^2(0,T)$. Since $\rho_{\phi^{\varepsilon_n}} \to \sigma\phi$ strongly in $L^\infty(0,T; L^1(\Omega))$, we have, for any test function $\xi \in C^1([0,T] \times \cl\Omega)$, that 
	\[
	\lim_{n \to \infty}\int_\Omega \rho_{\phi^{\varepsilon_n}} \div \xi \,dx = \int_\Omega \sigma\phi \,\div\xi \,dx \quad \text{strongly in } L^\infty(0,T),
	\]
	so then the product $\Lambda^{\varepsilon_n}\int_\Omega \rho_{\phi^{\varepsilon_n}}\div\xi \,dx$ converges strongly in $L^2(0,T)$ to $\Lambda(t)\int_\Omega \sigma\phi\div\xi\,dx$.
	
	In view of \eqref{eq:lambda0}, this completes the proof of Proposition \ref{prop:LM}.		
\end{proof}

Putting everything together, we now easily get:
\begin{proof}[Proof of Theorem \ref{thm:main} - (ii)]
We simply pass to the limit in \eqref{eq:eqre}, which we recall here:
$$
	\begin{aligned}
		\int_0^T \int_\Omega  \eps_n \pa_t\phi^{\eps_n} \na \phi^{\eps_n} \cdot \xi\, dx \, dt
		& = - \int_0^T\int_\Omega  \eps_n \na \phi^{\eps_n}\otimes \na \phi^{\eps_n}:D\xi\, dx\, dt \\
		& \quad + \int_0^T\int_\Omega   \frac{\eps_n}{2}  |\na \phi^{\eps_n} |^2 \div \xi\, dx \,dt \\
		& \quad +  \frac{1}{\eps_n}\int_0^T \int_\Omega W_\sigma(\sigma\phi^{\eps_n}) \div  \xi\, dx\,dt  \\
		& \quad +  \frac{1}{\eps_n}\int_0^T \int_\Omega  (\rho_{\phi^{\eps_n}} - \sigma \phi^{\eps_n}) \na \phi^{\eps_n} \cdot \xi - W_\sigma(\sigma\phi^{\eps_n})\div\xi \, dx\, dt.  
	\end{aligned}
$$
Using Proposition \ref{prop:convterm} (for the first four terms) and Proposition \ref{prop:LM} (for the last one), we get:
$$
	\begin{aligned}
		\gamma \int_0^T\!\!\!\!\int_\Omega  V  \xi\cdot \na \phi \, dt.
		& = - \gamma\int_0^T\!\!\!\!\int_\Omega \nu \otimes  \nu :D\xi\, |\na \phi| \, dt  + \gamma\int_0^T\!\!\!\!\int_\Omega       \div \xi  |\na \phi| \, dt   - \int_0^T \Lambda (t)\int_\Omega   \gamma \phi \, \div  \xi \, dt,
	\end{aligned}
$$
and the proof is complete.
\end{proof}

\bibliographystyle{plain}
\bibliography{mybib}

\end{document}